\newtheorem{theorem}{Theorem}[section]
\newtheorem{corollary}{Corollary}
\newtheorem{lemma}[theorem]{Lemma}
\newtheorem{proposition}{Proposition}
\newtheorem{algorithm}[theorem]{Algorithm}
\theoremstyle{definition}
\newtheorem{definition}[theorem]{Definition}
\newtheorem{remark}{Remark}
\newtheorem*{notation}{Notation}
\numberwithin{equation}{section}
\title[On the construction of Lyapunov functions]{On the construction of Lyapunov functions\\ with computer assistance}
\author[Kaname Matsue, Tomohiro Hiwaki and Nobito Yamamoto]{}
\subjclass{Primary: 34D05, 37B25; Secondary: 65H10.}
 \keywords{Lyapunov functions, computer-assisted proof.}
 \email{kmatsue@ism.ac.jp}
 \email{h1341007@edu.cc.uec.ac.jp}
 \email{yamamoto@im.uec.ac.jp}
\thanks{The first author is supported by Coop with Math Program, MEXT, Japan.}
\thanks{$^\ast$ Corresponding author: Kaname Matsue}
\begin{document}
\maketitle

% Enter the first author's name and address:
\centerline{\scshape Kaname Matsue$^\ast$}
\medskip
{\footnotesize
% please put the address of the first author
 \centerline{The Institute of Statistical Mathematics}
 \centerline{10-3, Midori-Cho} 
  \centerline{Tachikawa, 190-8562, Tokyo, Japan}
   
} % Do not forget to end the {\footnotesize by the sign }

\medskip

\centerline{\scshape Tomohiro Hiwaki}
\medskip
{\footnotesize
 % please put the address of the second  and third author
 \centerline{Graduate School of Informatics and Engineering}
   \centerline{The University of Electro-Communications}
  \centerline{1-5-1 Chofugaoka, Chofu, Tokyo 182-8585, Japan}
}

\medskip

\centerline{\scshape Nobito Yamamoto}
\medskip
{\footnotesize
 % please put the address of the second  and third author
 \centerline{Graduate School of Informatics and Engineering}
   \centerline{The University of Electro-Communications}
  \centerline{1-5-1 Chofugaoka, Chofu, Tokyo 182-8585, Japan}
%  \par
% \centerline{JST, CREST}
%   \centerline{The University of Electro-Communications}
%  \centerline{1-5-1 Chofugaoka, Chofu, Tokyo 182-8585, Japan}
}

\bigskip

% The name of the associate editor will be entered by an editorial staff
% "Communicated by the associate editor name" is not needed for special issue.
 \centerline{(Communicated by the associate editor name)}

%The abstract of your paper
\begin{abstract}
Computer assisted procedures of Lyapunov functions defined in given neighborhoods of fixed points for flows and maps are discussed.
We provide a systematic methodology for constructing explicit ranges where quadratic Lyapunov functions exist in two stages; negative definiteness of associating matrices and direct approach.
We note that the former is equivalent to the procedure of cones describing enclosures of the stable and the unstable manifolds of invariant sets, which gives us flexible discussions of asymptotic behavior not only around equilibria for flows but also fixed points for maps.
Additionally, our procedure admits a re-parameterization of trajectories in terms of values of Lyapunov functions.
Several verification examples are shown for discussions of applicability.
\end{abstract}

%The title of your section 1
\section{Introduction}
In this paper, we provide a systematic method to validate the domain of Lyapunov functions around hyperbolic fixed points both for continuous and for discrete dynamical systems with computer assistance.
%We also give the construction of Lyapunov functions around periodic orbits as fixed points of Poincar\'{e} maps as well as validation examples.
\par
\bigskip
First of all, we recall the definition of Lyapunov functions for flows in the typical sense.
Let $\varphi : \mathbb{R}\times \mathbb{R}^n \to \mathbb{R}^n$ be a flow on $\mathbb{R}^n$.
\begin{definition}[e.g. \cite{Rob}]\rm
\label{dfn-Lyapunov-typical}
Let $U\subset \mathbb{R}^n$ be an open subset. 
Consider the differential equation
\begin{equation}
\label{autonomous-intro}
\dot {\bf x} = f({\bf x}),\quad f\in C^1(\mathbb{R}^n, \mathbb{R}^n).
\end{equation}
A {\em Lyapunov function} $L:U\to \mathbb{R}^n$ (for the flow) is a $C^1$-function satisfying the following conditions.
\begin{enumerate}
\item $(dL/dt)(\varphi(t,{\bf x}))\mid_{t=0}\leq 0$ holds for each solution orbit $\{\varphi(t,{\bf x})\}$ through ${\bf x} \in U$.
\item $(dL/dt)(\varphi(t,{\bf x}))\mid_{t=0}= 0$ implies $\varphi(t,{\bf x})\equiv \bar {\bf x}\in U$, where $\bar {\bf x}$ is an equilibrium of (\ref{autonomous-intro}).
\end{enumerate}
\end{definition}

%\begin{definition}[e.g. \cite{Rob}]\rm
%\label{dfn-Lyapunov-typical}
%Let $\varphi:\mathbb{R}\times \mathbb{R}^n\to \mathbb{R}^n$ be a smooth flow on $\mathbb{R}^n$. 
%A {\em Lyapunov function for $\varphi$} is a $C^1$-function $L: \mathbb{R}^n \to \mathbb{R}$ satisfying
%\begin{enumerate}
%\item $(dL/dt)(\varphi(t,x))\mid_{t=0}\leq 0$ holds for each solution orbit $\{\varphi(t,x)\mid t\in \mathbb{R}\}$ through $\varphi(0,x) = x\in \mathbb{R}^n$.
%\item $(dL/dt)(\varphi(t,x))\mid_{t=0}= 0$ implies $\varphi(t,x)\equiv \bar x$, where $\bar x$ is an equilibrium of $\varphi$.
%\end{enumerate}
%Next let $\psi:\mathbb{R}^n \to \mathbb{R}^n$ be a diffeomorphism and consider iterations of $\psi$. 
%A {\em Lyapunov function for $\psi$} is a $C^1$-function $L: \mathbb{R}^n \to \mathbb{R}$ satisfying
%\begin{enumerate}
%\item $L(\psi(x)) \leq L(x)$ holds for each $x\in \mathbb{R}^n$.
%\item $L(\psi(x)) = L(x)$ implies $\psi(x) = x$, i.e., $x$ is a fixed point of $\psi$.
%\end{enumerate}
%\end{definition}
Lyapunov functions are heart of gradient dynamical systems, which ensure that all trajectories behave so that Lyapunov functions decrease monotonously. 
Such behavior is expected locally around hyperbolic fixed points or general invariant sets.
Once we construct Lyapunov functions locally around invariant sets, 
they let us easy to understand local dynamics in terms of level sets of Lyapunov functions. 
In Definition \ref{dfn-Lyapunov-typical}, a Lyapunov function $L$ is, if exists, defined in an open set $U\subset \mathbb{R}^n$, but it does not tell us how large $U$ can be chosen.
Furthermore, explicit constructions of Lyapunov functions themselves are not easy because of nonlinearity of dynamical systems. 
For typical systems which possess Lyapunov functions with explicit forms, such systems themselves are fully determined by these functions, which are often called {\em gradient systems}; namely, $\frac{d}{dt}{\bf x} = \nabla L({\bf x})$.
Although there is an abstract result concerning with the existence of Lyapunov functions, which is known as {\em Conley's Fundamental Theorem of Dynamical Systems} (e.g. \cite{Con, Rob}), detections of the concrete form of $L$ and the concrete shape of $U$ near invariant sets remain open and depend on individual dynamical systems.
Despite the great importance for dynamical systems, construction of Lyapunov functions in concrete systems remains open.
%Nevertheless, local analysis around fixed points with interval arithmetics contribute the explicit construction of locally defined Lyapunov functions, e.g. \cite{Mat}. 
%Similarly, such analyses also contribute the construction of {\em cones}, e.g. \cite{ZCov}, which describes local dynamics in the stable and unstable directions around invariant sets \cite{Rob}.
%
\par
There are several preceding works for validating functionals like Lyapunov functions within explicit domains (e.g. \cite{C, KWZ2007, Mat2}), many of which apply functionals called {\em cones} satisfying {\em cone conditions} (e.g. \cite{ZCov}) to understanding asymptotic behavior around invariant sets .
Cones with cone conditions restrict the behavior of points in terms of differences between two points. 
In particular, these concepts describe stable and unstable manifolds of invariant sets as graphs of Lipschitzian (or smooth) functions.
On the other hand, there is a preceding study for constructing Lyapunov functions in the sense of Definition \ref{dfn-Lyapunov-typical} in explicitly given neighborhoods of hyperbolic equilibria \cite{Mat}. 
There Lyapunov functions have very simple forms, and a sufficient condition for validating Lyapunov functions in given domains as well as hyperbolicity of equilibria for (semi)flows is proposed.

In these two directions, there are several similarities. 
Firstly, functionals (cones, Lyapunov functions) describing asymptotic behavior have {\em quadratic forms} around equilibria or fixed points.
Secondly, validations of functionals are done via negative definiteness of matrices associated with functionals.
Thirdly, computer assisted analysis via {\em interval arithmetics} are applied to validating given quadratic forms being cones or Lyapunov functions in explicitly given domains.

\bigskip
This paper aims a systematic procedure of Lyapunov functions around fixed points both for continuous and discrete dynamical systems by simple forms in given domains with computer assistance.
This procedure gives us a general implementation of Lyapunov functions around fixed points, which can be applied to various dynamical systems, as shown in preceding works (e.g. \cite{C, KWZ2007, Mat2, W, WZ}).
We also discuss relationships between preceding works; cones with cone conditions, Lyapunov functions in \cite{Mat}, and present studies, as well as a new aspect concerning with re-parameterization of trajectories.
We believe that presenting arguments lead to a comprehensive understanding of Lyapunov function validations.
\par
Our central target function $L$ is a quadratic function as preceding works. 
The determination of domains $L$ being a Lyapunov function, called {\em Lyapunov domain}, consists of following two stages. 
\begin{itemize}
\item {\bf Stage 1: Negative definiteness of the matrix associated with $dL/dt$}.
\end{itemize}
For a domain containing an equilibrium, we verify the negative definiteness of specific matrix associated with $dL/dt(\varphi(t,{\bf x}))$ along solution orbits with computer assistance, which gives us a sufficient condition so that $L$ is a Lyapunov function in the domain.
\begin{itemize}
\item {\bf Stage 2: Direct calculations of the negativity of $dL/dt$ along trajectories}.
\end{itemize}
For domains which do not contain equilibria, we calculate $dL/dt(\varphi(t,{\bf x}))$ directly with computer assistance and verify if it is negative.

If such criteria pass for given domains, the quadratic function $L$ is validated as a Lyapunov function on the domain.
Combination of validations in these two stages with computer assistance gives us a useful tool to validate explicit Lyapunov domains of given quadratic functions as large as possible, which will be the basis for constructing locally defined Lyapunov functions in various dynamical systems.

\bigskip
Our paper is organized as follows.
%\par
In Section \ref{section-cont}, we discuss the construction of Lyapunov functions for flows (continuous dynamical systems).
We provide a systematic construction of Lyapunov functions being quadratic.
We also generalize Lyapunov functions to {\em ${\bf m}$-Lyapunov functions}, which is a generalization of {\em $m$-cones} in \cite{Mat2}.
In Section \ref{section-aspect}, several geometric and algebraic aspects containing mathematical validity of numerically computed objects associated with Lyapunov functions, as well as relevance to preceding works (e.g. \cite{C, Mat, W, WZ, ZCov}) are discussed.
%\par
In Section \ref{section-discrete}, we discuss the construction of Lyapunov functions for maps (discrete dynamical systems).
The basic strategy for construction is similar to flows.
A discrete dynamical systems' version of ${\bf m}$-Lyapunov functions is also derived.
%\par
In Section \ref{section-Lyapunov-periodic}, 
we briefly review the computation of Poincar\'{e} maps for flows and their differentials so that we apply arguments in Section \ref{section-discrete} to Poincar\'{e} maps, which yields the construction of Lyapunov functions for periodic orbits.
%\par
In Section \ref{section-example-cont} and \ref{section-example-discrete}, several validation examples are shown for demonstrating applicability of arguments in previous sections.
The former refers to flows and the latter refers to Poincar\'{e} maps.

\begin{notation}
For scalars $t$ or vectors ${\bf x} = (x_1,\cdots, x_n)$, $[t]$ and $[{\bf x}]$ denote interval enclosures containing $t$ and ${\bf x}$, respectively. 
More precisely, $[{\bf x}]$ is a vector whose $i$-th entry is $[x_i]$ for $i=1,\cdots, n$.
For a function $f$ and objects $x$ (scalars or vectors), the set $f([x])$ is defined as 
$f([x]) = \{f(z)\mid z\in [x]\}$.
An interval matrix $[A]$ denotes a matrix whose entries are intervals. Namely, $[A] = ([a_{ij}])_{i,j=1}^n$, $[a_{ij}] = [a_{ij}^-, a_{ij}^+]\subset \mathbb{R}$.
A matrix-valued function $F(D) = (F_{ij}(D))_{i,j=1}^n$ of a set $D$, say rectangular domains, is defined by $F_{ij}(D) = \{F_{ij}({\bf x})\mid {\bf x}\in D\}$.
These notations are used in discussions with interval arithmetics.
\end{notation}

%The title of your section 2
\section{Lyapunov functions for continuous dynamical systems}
\label{section-cont}
In this section, we consider dynamical systems generated by ordinary differential equations, in particular, an autonomous system
\begin{equation}
\label{autonomous}
\displaystyle \frac{d}{dt}{\bf x} = f({\bf x}),\quad  {\bf x} \in \mathbb{R}^n,
\end{equation}
where $f : \mathbb{R}^n\to \mathbb{R}^n$ is a smooth map.
Let ${\bf x^\ast}$ be an equilibrium of (\ref{autonomous}). 
Also, assume that ${\bf x^\ast}$ is hyperbolic. 
Namely, all eigenvalues of the Jacobian matrix $Df({\bf x^\ast})$ are away from the imaginary axis.
\par
A fundamental result on dynamical systems called {\em Hartman-Grobman's Theorem} (e.g. \cite{Rob}) claims that 
the dynamics around a hyperbolic equilibrium ${\bf x}^\ast$ is topologically conjugate to the dynamics generated by the linearized matrix $Df({\bf x}^\ast)$ at ${\bf x}^\ast$ in a small neighborhood of ${\bf x}^\ast$. 

As for linear vector fields, we can easily construct Lyapunov functions. Indeed, any linear vector fields can be generically written as, under the change of coordinates,
\begin{equation}
\label{aut-linear}
\displaystyle \frac{d}{dt}{\bf x} = \Lambda {\bf x}, \quad \Lambda = {\rm diag}(\lambda_1,\cdots, \lambda_n).
\end{equation}
In this case, ones easily see that the functional 
\begin{equation*}
L({\bf x}) := -\sum_{i=1}^n {\rm sgn}({\rm Re}(\lambda_i)) x_i^2
\end{equation*}
is a Lyapunov function for (\ref{aut-linear}).
Under this observation, it is natural to consider that there are {\em locally defined} Lyapunov functions around, at least, hyperbolic equilibria.
Moreover, Lyapunov functions can be quadratic around hyperbolic equilibria. 
%With this in mind, we generalize the definition of Lyapunov functions as follows.

Our aim here is to provide a systematic procedure for constructing quadratic Lyapunov functions defined not only in a small neighborhood of hyperbolic equilibria but also in their explicitly given neighborhoods.
The latter can be realized with computer assistance such as interval arithmetics.

% New subsection
\subsection{Construction of quadratic functions}
\label{section-cont-construction}

An observation in the previous subsection gives us an expectation that Lyapunov functions can be locally constructed as {\em the quadratic function}.
Now we provide a basic strategy for constructing Lyapunov functions. 
Let ${\bf x}^\ast$ be an equilibrium for (\ref{autonomous}).

\begin{itemize}
\item[1.] 
Let $Df^\ast$ be the Jacobian matrix $Df({\bf x^\ast})$ of $f$ at ${\bf x}={\bf x}^\ast$.
For simplicity, assume that $Df^\ast$ is diagonalizable by a nonsingular matrix $X$, which is generically valid: 
\begin{equation*}
\Lambda = X^{-1}Df^\ast X,
\end{equation*}
where $\Lambda = {\rm diag}(\lambda_1,\lambda_2,\cdots,\lambda_n)$. 
Note that computations of $\Lambda$ and $X$ do not need to apply interval arithmetics.
Assume that, in the sense of floating-point arithmetics, ${\rm Re}(\lambda_i)\not = 0$ for all $i=1,\cdots, n$.
\item[2.] 
	Let $I^\ast$ be the diagonal matrix $I^\ast = {\rm diag}(i_1,i_2,\cdots,i_n)$, where
\begin{equation}
\label{sign-change-cont}
i_k = \left\{ \begin{array}{c} 
                1, \quad \mbox{if} \quad \mbox{Re}(\lambda_k) < 0, \\
                -1, \quad \mbox{if} \quad \mbox{Re}(\lambda_k) > 0. 
                \end{array}\right.
\end{equation}
Note that ${\rm Re}(\lambda_k)\not = 0$ since $x^\ast$ is hyperbolic.
\item[3.] 
Calculate the real symmetric matrix $Y = (Y_{ij})_{i,j=1}^n$ as follows:
\begin{equation}
\label{Y-cont}
\hat{Y} :=  X^{-H} I^\ast X^{-1}, \quad Y := \mbox{Re}( \hat{Y}),
\end{equation}
          where $X^{-H}$ denotes the inverse matrix of the Hermitian transpose $X^H$ of $X$, which is sufficient to be calculated by floating-point arithmetics.
\item[4.] 
Define the quadratic function $L$ by
\begin{equation}
\label{Lyapunov-flow}
L({\bf x}) := ({\bf x}-{\bf x}^\ast)^T Y ({\bf x}-{\bf x}^\ast),
\end{equation}
which is our candidate of Lyapunov function around $x^\ast$. If we deal with $L$ with interval arithmetics, we replace $Y$ by $(Y+Y^T)/2$ or set $Y_{ji} = Y_{ij}$ so that we keep the symmetry of $Y$. 
\end{itemize}

Note that the matrix $Y$ is determined to be {\em real and symmetric}.
The operation $Y\mapsto (Y+Y^T)/2$ avoids a delicate case that $Y$ is not actually symmetric due to, say, numerical errors.
It is valid for our arguments to provide various properties of $L$ as a Lyapunov function, which can be seen below.

% New subsection
\subsection{Validity of $L({\bf x})$ in (\ref{Lyapunov-flow})}
Here we find a sufficient condition such that the function $L({\bf x})$ in (\ref{Lyapunov-flow}) is indeed a Lyapunov function in a given  neighborhood of ${\bf x^\ast}$. 
%Moreover, we prove that such a sufficient condition is indeed satisfied in a sufficiently small neighborhood of ${\bf x^\ast}$.

%Summarizing the above observations,
%\begin{proposition}
%Let ${\bf x}^\ast$ be a hyperbolic fixed point of (\ref{autonomous}). Then there is a neighborhood of $U$ of ${\bf x}^\ast$ such that $A({\bf z})$ given by (\ref{matrix-neg-def}) is negative definite for all points in $U$. In particular, $L({\bf x})$ is a Lyapunov function on $U$.
%\end{proposition}

Note that quadratic forms with respect to Hermitian matrices take real values.
In particular, 
\begin{equation*}
{\bf z}^T H {\bf z} = {\bf z}^T \mbox{Re}(H) {\bf z}
\end{equation*}
holds for a Hermitian matrix $H$ and a real vector $z$. 

%\begin{proposition}
%\label{prop-sufficient-cont}
%Let $D_L$ be a compact, star-shaped domain centered at an equilibrium ${\bf x}^\ast$ of (\ref{autonomous}). 
%Define a matrix $A({\bf z})$ by
%\begin{equation}
%\label{matrix-neg-def}
%A({\bf z}) = Df({\bf z})^T Y + Y Df({\bf z}).
%\end{equation}
%Assume that the matrix $A({\bf z})$ is strictly negative definite for all ${\bf z}\in D_L$. 
%Then $L({\bf x})$ is a Lyapunov function on $D_L$.
%\end{proposition}

Now we have the following theorem, which indicates that our procedure validates 
(i) Lyapunov functions on a compact, star-shaped domain $D_L$, and
(ii) the uniqueness of equilibria in $D_L$
at the same time.

\begin{theorem}
\label{thm-Lyapunov-cont}
Let $D_L$ be a compact, star-shaped domain centered at an equilibrium ${\bf x}^\ast$ of (\ref{autonomous}). 
Define a matrix $A({\bf z})$ by
\begin{equation}
\label{matrix-neg-def}
A({\bf z}) = Df({\bf z})^T Y + Y Df({\bf z}).
\end{equation}
Assume that the matrix $A({\bf z})$ is strictly negative definite for all ${\bf z}\in D_L$. 
Then $L({\bf x})$ is a Lyapunov function on $D_L$. 
Moreover, ${\bf x}^\ast$ is the unique equilibrium in $D_L$.
\end{theorem}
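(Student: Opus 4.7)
The plan is to differentiate $L$ along trajectories, rewrite the derivative as an integral of the quadratic form associated with $A(\mathbf{z})$ in (\ref{matrix-neg-def}), and read off both the Lyapunov property and the uniqueness of equilibria directly from the negative-definiteness hypothesis.

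First, since $Y$ is real symmetric and $L(\mathbf{x}) = (\mathbf{x}-\mathbf{x}^\ast)^T Y (\mathbf{x}-\mathbf{x}^\ast)$, the chain rule gives
\begin{equation*}
(dL/dt)(\varphi(t,\mathbf{x}))\mid_{t=0} = 2(\mathbf{x}-\mathbf{x}^\ast)^T Y f(\mathbf{x}).
\end{equation*}
Because $f(\mathbf{x}^\ast) = 0$, the fundamental theorem of calculus yields $f(\mathbf{x}) = \int_0^1 Df(\mathbf{x}^\ast + s\mathbf{w})\,\mathbf{w}\,ds$ with $\mathbf{w} := \mathbf{x}-\mathbf{x}^\ast$. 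Symmetrising the scalar integrand via $2\mathbf{w}^T Y M \mathbf{w} = \mathbf{w}^T(M^T Y + Y M)\mathbf{w}$ then produces the key identity
\begin{equation*}
(dL/dt)(\varphi(t,\mathbf{x}))\mid_{t=0} = \int_0^1 \mathbf{w}^T A(\mathbf{x}^\ast + s\mathbf{w})\,\mathbf{w}\,ds.
\end{equation*}

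The star-shape hypothesis on $D_L$ is precisely what makes this representation exploitable: for every $\mathbf{x}\in D_L$ and $s\in[0,1]$, the segment point $\mathbf{x}^\ast + s\mathbf{w}$ still lies in $D_L$, so the assumption yields $\mathbf{w}^T A(\mathbf{x}^\ast + s\mathbf{w})\mathbf{w} \le 0$ with equality only when $\mathbf{w}=0$. Integrating gives $(dL/dt)(\varphi(t,\mathbf{x}))\mid_{t=0} \le 0$ throughout $D_L$, with equality forcing $\mathbf{x}=\mathbf{x}^\ast$; this is exactly conditions (1)--(2) of Definition \ref{dfn-Lyapunov-typical}. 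For uniqueness, any equilibrium $\mathbf{y}\in D_L$ would make the left-hand side of the identity vanish (since $f(\mathbf{y})=0$), but the strictly negative integrand then forces $\mathbf{w}=\mathbf{y}-\mathbf{x}^\ast = 0$.

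The main step is the symmetrisation under the integral that identifies the integrand with $\mathbf{w}^T A(\cdot)\mathbf{w}$; this is where the particular form of $A$ chosen in (\ref{matrix-neg-def}) becomes natural, being exactly what the chain rule and fundamental theorem of calculus produce. Beyond this I do not anticipate any obstacle, as the star-shape condition is precisely the geometric ingredient needed to align the line-integration path with the pointwise negative-definiteness assumption.
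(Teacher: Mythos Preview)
Your proposal is correct and follows essentially the same route as the paper's proof: both differentiate $L$ along trajectories, use the fundamental theorem of calculus with $f(\mathbf{x}^\ast)=0$ to write $f(\mathbf{x})$ as an integral of $Df$ along the segment from $\mathbf{x}^\ast$ to $\mathbf{x}$, recognise the resulting quadratic form as an integral of $\mathbf{w}^T A(\cdot)\mathbf{w}$, and then invoke star-shapedness plus strict negative definiteness to obtain both the Lyapunov property and uniqueness. The only cosmetic difference is that you exploit the symmetry of $Y$ to write the derivative as $2\mathbf{w}^T Y f(\mathbf{x})$ and then symmetrise the integrand, whereas the paper keeps the two terms $f^T Y\mathbf{w}$ and $\mathbf{w}^T Y f$ separate throughout; the content is identical.
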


In our arguments, Lyapunov functions are considered in compact domains. Nevertheless, thanks to the continuous dependence of $A({\bf z})$, we can easily extend functions to open neighborhoods of such compact domains, and hence our arguments with respect to Lyapunov functions in the sense of Definition \ref{dfn-Lyapunov-typical} still make sense.

\begin{proof}
Let ${\bf x}(t)$ be a solution orbit of (\ref{autonomous}) with ${\bf x}(0) = {\bf x}$. Differentiating $L({\bf x}(t))$ with respect to $t$ along ${\bf x}(t)$, we obtain
\begin{equation*}
\displaystyle \frac{d}{dt}L({\bf x})_{t=0} = 
f({\bf x})^T Y ({\bf x}-{\bf x}^\ast) + 
({\bf x}-{\bf x}^\ast)^T Y f({\bf x}).
\end{equation*}
Now we consider the function
\begin{equation*}
g(s) = f({\bf x}^\ast+s({\bf x} - {\bf x}^\ast)).
\end{equation*}
Since
\begin{equation*}
\displaystyle \frac{dg}{ds}(s) =
     Df({\bf x}^\ast+s({\bf x} - {\bf x}^\ast))({\bf x} - {\bf x}^\ast)
\end{equation*}
and $f({\bf x}^\ast)={\bf 0}$, we obtain
\begin{equation*}
f({\bf x}) = \displaystyle 
\int_0^1 Df({\bf x}^\ast+s({\bf x} - {\bf x}^\ast)) ds ({\bf x} - {\bf x}^\ast),
\end{equation*}
where $Df({\bf x})$ is the Jacobian matrix of $f$ at ${\bf x}$.
The $t$-differential of $L({\bf x}(t))$ is thus represented by a quadratic function as follows:
\begin{equation}
\label{t-differential}
\displaystyle \frac{d}{dt}L({\bf x}) =
({\bf x} - {\bf x}^\ast)^T \displaystyle 
\int_0^1 \{ Df({\bf x}^\ast+s({\bf x} - {\bf x}^\ast))^T Y + 
  Y Df({\bf x}^\ast+s({\bf x} - {\bf x}^\ast)) \}ds\, ({\bf x} - {\bf x}^\ast).
\end{equation}

Let ${\bf z}={\bf x}^\ast+s({\bf x} - {\bf x}^\ast)$, and $A({\bf z})$ be a real symmetric matrix given by (\ref{matrix-neg-def}).
If $A({\bf z})$ is negative definite on $D_L$, then it is also negative definite at any point ${\bf z}$ on a segment connecting ${\bf x}^\ast$ and ${\bf x}$, since $\{{\bf x}^\ast+s({\bf x} - {\bf x}^\ast)\mid s\in [0,1]\}\subset D_L$.
Thus we get $\displaystyle \frac{d}{dt}L({\bf x}) < 0$ for any ${\bf x}\neq {\bf x^\ast}$.
\par
Obviously, $\displaystyle \frac{d}{dt}L({\bf x}) < 0$ for all ${\bf x} \not = {\bf x}^\ast$ and $\displaystyle \frac{d}{dt}L({\bf x}^\ast) = 0$.
Therefore, $L({\bf x})$ is a Lyapunov function on $D_L$.
\par
\bigskip
The form $dL/dt({\bf x}) = ({\bf x}-{\bf x}^\ast)^T \int_0^1 A({\bf x}^\ast + s({\bf x} - {\bf x}^\ast))ds  ({\bf x}-{\bf x}^\ast)$ and the strict negative definiteness of $A({\bf z})$ in $D_L$ imply that $dL/dt(\varphi(t,{\bf x}))_{t=0} < 0$ in $D_L\setminus \{{\bf x}^\ast\}$.
It implies that ${\bf x}^\ast$ is the unique equilibrium in $D_L$.
\end{proof}

In this theorem, we assume the hyperbolicity of ${\bf x}^\ast$ not in the rigorous sense, but just in the numerical sense.
Nevertheless, an additional assumption guarantees the hyperbolicity of ${\bf x}^\ast$ in the rigorous sense during the construction of Lyapunov functions, which is discussed in Section \ref{section-hyp-cont}.

\begin{definition}\rm
We shall call the domain $D_L$ where the Lyapunov function $L$ exists {\em a Lyapunov domain} (for $L$). 
\end{definition}

\begin{remark}\rm
In preceding works (e.g. \cite{Mat, ZCov}), $D_L$ is assumed to be convex.
Our theorem weakens the geometric condition of $D_L$.
\end{remark}

% New subsection
\subsection{Verification of Lyapunov domains with interval arithmetics}

%\subsubsection{Stage 1: A subdomain contains ${\bf x}^\ast$.}
\subsubsection{Stage 1: Negative definiteness of $A({\bf z})$.}
\label{section-stage1-cont}
Theorem \ref{thm-Lyapunov-cont} claims that $L({\bf x}) = ({\bf x}-{\bf x}^\ast)^T Y ({\bf x}-{\bf x}^\ast)$ is a Lyapunov function on a compact, star-shaped domain $D_L$ centered at an equilibrium ${\bf x}^\ast$, if the real symmetric matrix $A({\bf z})$ in (\ref{matrix-neg-def}) is negative definite. 
\par
If the domain $D_L$ itself or a piece of divided subdomains contains an equilibrium, we enclose it an interval vector $[{\bf x}^\ast]$ and verify the negative definiteness of the matrix enclosure $A([{\bf x}^\ast]) = \{A({\bf x})\mid {\bf x} \in [{\bf x}^\ast]\}$ on domains in our considerations.
An example of such a procedure is the following:

\begin{algorithm}
\label{alg-cont}
An algorithm for validating Lyapunov domains containing an equilibrium ${\bf x}^\ast$ based on the strict negative definiteness of $A({\bf z})$ consists of the following.
\begin{itemize}
\item[(1)]
	Calculate $A({\bf x}^\ast)$ with floating-point arithmetics. 
	If necessary, diagonalize $A({\bf x}^\ast)$ approximately after ensuring the symmetry of $A({\bf x}^\ast)$. Namely, calculate a matrix $X^\ast$ such that
          \begin{equation*}
          \Lambda^\ast = (X^\ast)^{-1}A({\bf x}^\ast) X^\ast.
          \end{equation*}
          Note that $X^\ast$ can be chosen as a real matrix since $A({\bf x}^\ast)$ is real and symmetric.
\item[(2)]
	Calculate the interval matrix $(X^\ast)^{-1}A([{\bf x}^\ast]) X^\ast$ with interval arithmetics. Let $[a^\ast]_{ij}$ the $(i,j)$-th entry of the matrix, which takes the interval value.
\item[(3)] 
	Apply the Gershgorin Circle Theorem to the interval matrix, namely, verify 
          \begin{equation*}
          [a^\ast]_{ii}+\displaystyle \sum_{j\neq i}|[a^\ast]_{ij}| < 0
          \end{equation*}
	with interval arithmetics for $i=1,\cdots,m$.
\end{itemize}
\end{algorithm}
This procedure works to prove the existence of Lyapunov functions as long as the above algorithm passes successfully, even if domains do not contain equilibria.
Note that there is a more effective verification method using Cholesky decomposition by Rump, which can be applied on INTLAB \cite{INTLAB}.
\par
\bigskip
We also note that the negative definiteness of $A({\bf z})$ does not involve the integral in (\ref{t-differential}).
This fact indicates that the verification of negative definiteness of $A({\bf z})$ on $D_L$ by Algorithm \ref{alg-cont} can be done independently for each subdomain $D_k$ of $D_L$ with the decomposition $D_L = \bigcup_{k=1}^K D_k$.
Namely,
\begin{corollary}
\label{cor-subdivision-cont}
Let $D_L$ be a compact, star-shaped domain centered at an equilibrium ${\bf x}^\ast$ of (\ref{autonomous}). 
Define a matrix $A({\bf z})$ by (\ref{matrix-neg-def}).
Let $D_L=\bigcup_{k=1}^K D_k$ be a decomposition of $D_L$ into subdomains.
We enclose each $D_k$ by an interval vector $[{\bf x}_k]$ and define an interval matrix $A([{\bf x}_k])$ in the similar way to the above arguments.
Assume that, for all $k=1,\cdots, K$, the interval matrix matrix $A([{\bf x}_k])$ is strictly negative definite
in the sense that the matrix $A({\bf z})$ is strictly negative definite for all ${\bf z}\in [{\bf x}_k]$.
Then $L({\bf x})$ is a Lyapunov function on $D_L$. 
Moreover, ${\bf x}^\ast$ is the unique equilibrium in $D_L$.
\end{corollary}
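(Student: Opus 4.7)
The plan is to reduce the statement directly to Theorem \ref{thm-Lyapunov-cont} by verifying that, under the stated hypotheses, the matrix $A(\mathbf{z})$ is strictly negative definite at every point of $D_L$. Once pointwise negative definiteness on $D_L$ is established, Theorem \ref{thm-Lyapunov-cont} applies verbatim to yield both that $L$ is a Lyapunov function on $D_L$ and that $\mathbf{x}^\ast$ is the unique equilibrium there, so no additional dynamical argument is needed.

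The first step is to unpack the hypothesis on the enclosures. By construction, each subdomain $D_k$ is contained in its interval enclosure $[\mathbf{x}_k]$, so for any $\mathbf{z} \in D_k$ we have $\mathbf{z} \in [\mathbf{x}_k]$. The assumption that $A([\mathbf{x}_k])$ is strictly negative definite in the stated sense means exactly that $A(\mathbf{z})$ is strictly negative definite for every $\mathbf{z} \in [\mathbf{x}_k]$. Combining these, $A(\mathbf{z})$ is strictly negative definite for every $\mathbf{z} \in D_k$.

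Next I would use the covering $D_L = \bigcup_{k=1}^K D_k$. Given any $\mathbf{z} \in D_L$, there is at least one index $k$ with $\mathbf{z} \in D_k$, and by the previous step $A(\mathbf{z})$ is strictly negative definite at that point. Since $k$ may depend on $\mathbf{z}$ but the negative definiteness is a pointwise property, this yields strict negative definiteness of $A(\mathbf{z})$ uniformly on all of $D_L$. Note that this step does not require the decomposition to be disjoint, nor any regularity of the $D_k$: overlaps, shared boundaries, and different shapes are all harmless because negative definiteness is stable under taking subsets.

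Finally, I would invoke Theorem \ref{thm-Lyapunov-cont} with the compact, star-shaped domain $D_L$ and the matrix $A(\mathbf{z})$ just shown to be strictly negative definite on $D_L$. Its conclusions — that $L(\mathbf{x})$ is a Lyapunov function on $D_L$ and that $\mathbf{x}^\ast$ is the unique equilibrium in $D_L$ — are exactly the conclusions of the corollary. The only conceptual care point, rather than an obstacle, is to stress that the verification step is genuinely local to each $[\mathbf{x}_k]$ because the integrand in (\ref{t-differential}) is controlled by the pointwise negative definiteness of $A(\mathbf{z})$ along each segment from $\mathbf{x}^\ast$, so the subdivision is a purely computational convenience that does not break the global implication.
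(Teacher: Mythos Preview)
Your proposal is correct and matches the paper's approach: the paper simply remarks that the proof follows by the same arguments as the first part of Theorem~\ref{thm-Lyapunov-cont} applied for each $k$, which is exactly your observation that pointwise negative definiteness of $A(\mathbf{z})$ on each enclosure $[\mathbf{x}_k]\supset D_k$ yields negative definiteness on all of $D_L=\bigcup_k D_k$, after which Theorem~\ref{thm-Lyapunov-cont} applies directly. Your additional remark that the segment from $\mathbf{x}^\ast$ to $\mathbf{x}$ may cross several subdomains but is still covered by the decomposition makes explicit the one point the paper leaves implicit.
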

The proof can be done by the same arguments as the first part of Theorem \ref{thm-Lyapunov-cont} for ${\bf x}\in D_k$ for each $k=1,\cdots, K$.

\subsubsection{Stage 2: The case where a subdomain does not contain ${\bf x}^\ast$.}
\label{section-stage2-cont}
Even if the strict negative definiteness of $A({\bf x})$ violates, if a divided subdomain does not contain equilibria, there is another way to verify $dL/dt < 0$ on it, namely, verifying
\begin{equation*}
\displaystyle \frac{d}{dt}L({\bf x})\mid_{t=0} =
f({\bf x})^T Y ({\bf x}-{\bf x}^\ast) + 
({\bf x}-{\bf x}^\ast)^T Y f({\bf x})
\end{equation*}
being negative directly by interval arithmetics. 
\begin{corollary}
Let $D_L\subset \mathbb{R}^n$ be a compact and star-shaped set and ${\bf x}^\ast$ be an equilibrium of (\ref{autonomous}). Assume that the vector
\begin{equation*}
f({\bf x})^T Y ({\bf x}-{\bf x}^\ast) + ({\bf x}-{\bf x}^\ast)^T Y f({\bf x})
\end{equation*}
is strictly negative for all ${\bf x}\in D_L$. 
Then $L({\bf x})$ given by (\ref{Lyapunov-flow}) is a Lyapunov function on $D_L$.
\end{corollary}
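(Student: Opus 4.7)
The plan is to apply Definition \ref{dfn-Lyapunov-typical} directly, mirroring the opening computation of the proof of Theorem \ref{thm-Lyapunov-cont} but without the detour through the integral representation of $f$. Since $A({\bf z})$ plays no role here, the mean-value / fundamental-theorem-of-calculus step is unnecessary: the hypothesis is already phrased in terms of $dL/dt$ itself.

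First, I would fix ${\bf x}\in D_L$, let ${\bf x}(t)$ be the trajectory of (\ref{autonomous}) with ${\bf x}(0)={\bf x}$, and differentiate $L({\bf x}(t)) = ({\bf x}(t)-{\bf x}^\ast)^T Y ({\bf x}(t)-{\bf x}^\ast)$ using $\dot{\bf x}=f({\bf x})$, obtaining
\begin{equation*}
\frac{d}{dt}L({\bf x}(t))\Big|_{t=0} = f({\bf x})^T Y({\bf x}-{\bf x}^\ast) + ({\bf x}-{\bf x}^\ast)^T Y f({\bf x}),
\end{equation*}
which is precisely the scalar quantity assumed strictly negative on $D_L$. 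This immediately gives condition (1) of Definition \ref{dfn-Lyapunov-typical} with strict inequality.

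For condition (2), I would observe that strict negativity on all of $D_L$ forces $D_L$ to contain no equilibrium of (\ref{autonomous}): if some ${\bf x}\in D_L$ satisfied $f({\bf x})={\bf 0}$, the displayed expression would vanish at ${\bf x}$, contradicting the hypothesis. Thus $dL/dt$ is never zero on $D_L$, and the implication of condition (2) is vacuously satisfied, so $L$ is indeed a Lyapunov function on $D_L$.

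There is essentially no technical obstacle; the only point worth flagging is the conceptual one. Strict negativity of the hypothesis silently encodes the exclusion of equilibria from $D_L$, which is exactly why this corollary complements Theorem \ref{thm-Lyapunov-cont}: the theorem covers subdomains containing ${\bf x}^\ast$ (where any direct verification necessarily fails at ${\bf x}^\ast$), while this corollary handles the remaining subdomains of a decomposition in which the equilibrium has been excised, as in the Stage 2 discussion preceding the statement.
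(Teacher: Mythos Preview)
Your argument is correct and matches the paper's approach: the corollary is stated there without proof, since it follows immediately from the identity $\frac{d}{dt}L({\bf x})\big|_{t=0} = f({\bf x})^T Y({\bf x}-{\bf x}^\ast) + ({\bf x}-{\bf x}^\ast)^T Y f({\bf x})$ already established at the beginning of the proof of Theorem \ref{thm-Lyapunov-cont}. Your observation that the strict-negativity hypothesis forces $D_L$ to be equilibrium-free (making condition (2) vacuous) is exactly the point of Stage~2 and is consistent with the paper's intent.
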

In practical computations, for an interval vector $[{\bf x}]$ containing the subdomain, we calculate $\displaystyle \frac{d}{dt}L([{\bf x}])$ with interval arithmetics and verify if it is negative. 
We discuss the effectiveness of this procedure in Section \ref{section-example-cont}.

\bigskip
If either of the above procedures (Stage 1 and 2) passes in $D_L$, it turns out that the function $L({\bf x})$ is a Lyapunov function in $D_L$. 
In other words, $D_L$ is a Lyapunov domain.

% New subsection
\subsection{${\bf m}$-Lyapunov functions}
\label{section-m-Lyapunov-cont}
A typical choice of the matrix $Y$ transform the original coordinate to the new one such that (\ref{autonomous}) is approximately described by
\begin{equation*}
\frac{d}{dt}y_i = \lambda_i y_i + o(|{\bf y}|),\quad i=1,\cdots, n, 
\end{equation*}
which gives a Lyapunov function of the following form in a small neighborhood of the equilibrium ${\bf y}^\ast = (y_1^\ast, \cdots, y_n^\ast)$ (cf. Section \ref{section-hyp-cont}):
\begin{equation}
\label{Lyapunov-Mat}
L({\bf y}) = -\sum_{i=1}^n {\rm sgn}({\rm Re}(\lambda_i)) (y_i-y_i^\ast)^2.
\end{equation}
Note that all coefficients in (\ref{Lyapunov-Mat}) have modulus $1$.
It is thus natural to consider the similar quadratic functions {\em with arbitrary coefficients},
which leads to the idea of ${\bf m}$-Lyapunov functions; the main discussions in this subsection.

The coefficients in (\ref{Lyapunov-Mat}) is in fact determined by the matrix $I^\ast$ in the definition of $Y$.
Now we consider the following diagonal matrix $M^\ast = {\rm diag}(i_1,i_2,\cdots,i_m)$ , where
\begin{equation*}
i_j = \left\{ \begin{array}{c} 
                m_j, \quad \mbox{if} \quad \mbox{Re}(\lambda_j) < 0, \\
                -m_j, \quad \mbox{if} \quad \mbox{Re}(\lambda_j) > 0 
                \end{array}\right.
\end{equation*}
instead of the diagonal matrix $I^\ast$. Here ${\bf m} = \{m_j\}_{j=1}^n$ denotes a sequence of given positive numbers.
Define $L_{M^\ast}({\bf x})$ as follows: 
\begin{equation}
\label{Lyapunov-M-cont}
L_{M^\ast}({\bf x}) := ({\bf x} - {\bf x}^\ast)^T Y_{M^\ast}({\bf x} - {\bf x}^\ast),\qquad Y_{M^\ast} = {\rm Re}(X^{-H}M^\ast X^{-1}), 
\end{equation}
which is nothing but $L$ replacing $I^\ast$ by $M^\ast$. 
We can then prove negative definiteness of the matrix $A$ corresponding to the differential $(dL_{M^\ast}/dt)({\bf x})$ at ${\bf x}={\bf x}^\ast$ by similar arguments to Theorem \ref{thm-Lyapunov-cont}.
Consequently, we obtain the following corollary.

\begin{corollary}
\label{cor-m-Lyap-cont}
Let $D_L$ be a compact, star-shaped domain containing an equilibrium ${\bf x}^\ast$ of (\ref{autonomous}). 
Define a matrix $A_{M^\ast}({\bf z})$ by $A$ in (\ref{matrix-neg-def}) replacing $I^\ast$ by $M^\ast$ for a given sequence of positive numbers ${\bf m}$.
Assume that the replaced matrix $A_{M^\ast}({\bf z})$ is strictly negative definite for all ${\bf z}\in D_L$. 
Then $L_{M^\ast}({\bf x})$ given in (\ref{Lyapunov-M-cont}) is a Lyapunov function on $D_L$. 
Moreover, ${\bf x}^\ast$ is the unique equilibrium in $D_L$.
\end{corollary}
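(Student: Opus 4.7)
The plan is to recognize that the proof of Theorem \ref{thm-Lyapunov-cont} never exploited the specific unit-modulus entries of $I^\ast$; it used only two structural facts, namely that $Y$ is real symmetric (so that $L$ is real-valued and $A = Df^T Y + Y Df$ is real symmetric) and that $f({\bf x}^\ast)={\bf 0}$ combined with the star-shapedness of $D_L$. Since $M^\ast$ is still a real diagonal matrix, $X^{-H} M^\ast X^{-1}$ is Hermitian, so its real part $Y_{M^\ast}$ is real symmetric, and both structural hypotheses survive the substitution $I^\ast \mapsto M^\ast$. The corollary should therefore follow by replaying the argument of Theorem \ref{thm-Lyapunov-cont} verbatim with $Y, A, L$ replaced by $Y_{M^\ast}, A_{M^\ast}, L_{M^\ast}$.

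Concretely, I would first differentiate $L_{M^\ast}$ along a solution orbit through an arbitrary ${\bf x} \in D_L$, then use $f({\bf x}^\ast)={\bf 0}$ together with the fundamental theorem of calculus applied to $s \mapsto f({\bf x}^\ast + s({\bf x}-{\bf x}^\ast))$ to express $f({\bf x})$ as an integral of $Df$ along the segment from ${\bf x}^\ast$ to ${\bf x}$. Substitution produces
\begin{equation*}
\frac{d}{dt}L_{M^\ast}({\bf x}) = ({\bf x}-{\bf x}^\ast)^T \int_0^1 A_{M^\ast}\bigl({\bf x}^\ast + s({\bf x}-{\bf x}^\ast)\bigr)\, ds \, ({\bf x}-{\bf x}^\ast),
\end{equation*}
which is the $M^\ast$-analogue of (\ref{t-differential}). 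Star-shapedness of $D_L$ with center ${\bf x}^\ast$ guarantees that the entire integration segment lies inside $D_L$, so the pointwise hypothesis of strict negative definiteness of $A_{M^\ast}$ forces the integrand, and hence the integral, to be negative definite. Consequently $dL_{M^\ast}/dt < 0$ on $D_L \setminus \{{\bf x}^\ast\}$ and vanishes at ${\bf x}^\ast$, verifying the conditions of Definition \ref{dfn-Lyapunov-typical}. The uniqueness of ${\bf x}^\ast$ as an equilibrium in $D_L$ then follows as in the last paragraph of Theorem \ref{thm-Lyapunov-cont}: any other equilibrium $\bar{\bf x} \in D_L$ would satisfy $f(\bar{\bf x})={\bf 0}$ and so $dL_{M^\ast}/dt(\bar{\bf x}) = 0$, contradicting strict negativity.

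I expect no substantive obstacle; the argument is essentially a templated restatement. The only point worth verifying is that the quadratic form remains real-valued and its associated matrix real symmetric, which is automatic from $M^\ast$ being real and diagonal (so that $X^{-H} M^\ast X^{-1}$ is Hermitian and its real part is real symmetric), together with the identity ${\bf z}^T H {\bf z} = {\bf z}^T {\rm Re}(H) {\bf z}$ for real ${\bf z}$ and Hermitian $H$ noted in the paragraph preceding Theorem \ref{thm-Lyapunov-cont}. The optional symmetrization $(Y_{M^\ast}+Y_{M^\ast}^T)/2$ indicated in Section \ref{section-cont-construction} can be invoked to guarantee exact symmetry under interval arithmetic.
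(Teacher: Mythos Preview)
Your proposal is correct and matches the paper's own approach: the paper states the corollary without a separate proof, indicating only that it follows ``by similar arguments to Theorem \ref{thm-Lyapunov-cont}'', which is precisely the verbatim replay with $I^\ast \mapsto M^\ast$ that you carry out. Your additional remark that the argument uses only the real symmetry of $Y$ and never the unit-modulus entries of $I^\ast$ is exactly the observation that licenses this substitution.
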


We shall call the function $L_{M^\ast}$ satisfying assumptions in Corollary \ref{cor-m-Lyap-cont} an {\em ${\bf m}$-Lyapunov function}.
Since positive numbers ${\bf m}=\{m_j\}_{j=1}^n$ can be chosen arbitrarily, we can arrange these distributions, depending on situations.
In particular, we can adjust the enclosure of stable and unstable directions in a neighborhood of a saddle ${\bf x}^\ast$ by arranging ${\bf m}$ as follows.
\begin{itemize}
\item If we choose the unstable cone $L_{M^\ast}^{-1}(-\infty,0)\cap D_L$ of ${\bf x}^\ast$ as sharp as possible, we choose positive numbers $m_j$ corresponding to $\mbox{Re}(\lambda_j) < 0$ as large as possible, compared with those corresponding to $\mbox{Re}(\lambda_j) > 0$.
\item If we choose the stable cone $L_{M^\ast}^{-1}(0,\infty)\cap D_L$ of ${\bf x}^\ast$ as sharp as possible, we choose positive numbers $m_j$ corresponding to $\mbox{Re}(\lambda_j) > 0$ as large as possible, compared with those corresponding to $\mbox{Re}(\lambda_j) < 0$.
\end{itemize}
Note that the geometry of $D_L$ depends on the choice of positive numbers $\{m_j\}_{j=1}^n$, which is discussed in Section \ref{section-example-cont}.

% New section
\section{Several aspects of Lyapunov domains}
\label{section-aspect}
We have seen that the strict negative definiteness of $A({\bf z})$ in (\ref{matrix-neg-def}) intrinsically provides us with Lyapunov domains containing unique equilibria.
Moreover, our Lyapunov functions are quadratic. 
Lyapunov functions with explicit forms and domains of definition help us with the study of asymptotic behavior around equilibria {\em in given domains}. 
At least, a Lyapunov function $L$ admits the following geometric and algebraic aspects.
Let $D_L$ be a Lyapunov domain.

\subsection{Lyapunov functions around hyperbolic equilibria}

In Theorem \ref{thm-Lyapunov-cont}, we focused on the strict negative definiteness of $A({\bf z})$ in a given domain containing ${\bf x}^\ast$.
The negative definiteness of $A({\bf z})$ reflects the hyperbolicity of $x^\ast$. 
Indeed, for a {\em rigorous} hyperbolic equilibrium ${\bf x}^\ast$, the matrix $A({\bf x}^\ast)$ is strictly negative definite.

\begin{proposition}
\label{prop-hyp-cont}
Let ${\bf x}^\ast$ be a hyperbolic equilibrium for (\ref{autonomous}). 
Then there is a neighborhood of $U$ of ${\bf x}^\ast$ such that $A({\bf z})$ given by (\ref{matrix-neg-def}) is negative definite for all points in $U$. 
Moreover, $L({\bf x})$ is a Lyapunov function on $U$.
\end{proposition}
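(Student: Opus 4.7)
The plan is to verify strict negative definiteness of $A({\bf x}^\ast)$ by a direct eigenbasis computation, upgrade this to an open neighborhood by continuity, and then invoke Theorem \ref{thm-Lyapunov-cont}.

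First I would pass to the complex version $\hat{A}({\bf z}) := Df({\bf z})^H \hat{Y} + \hat{Y}\, Df({\bf z})$, noting that $\hat{Y} = X^{-H} I^\ast X^{-1}$ is Hermitian (because $I^\ast$ is real diagonal) and that $Df$ is real-valued, so $A({\bf z}) = \mathrm{Re}(\hat{A}({\bf z}))$. Substituting $Df({\bf x}^\ast) = X \Lambda X^{-1}$, the factors $X^H X^{-H}$ and $X^{-1} X$ telescope, leaving
\[
\hat{A}({\bf x}^\ast) = X^{-H}\bigl(\Lambda^H I^\ast + I^\ast \Lambda\bigr) X^{-1}.
\]
The middle factor is diagonal with $i$-th entry $2 i_i\, \mathrm{Re}(\lambda_i)$, which by the sign assignment (\ref{sign-change-cont}) equals $-2|\mathrm{Re}(\lambda_i)|$. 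Hyperbolicity of ${\bf x}^\ast$ rules out $\mathrm{Re}(\lambda_i)=0$, so this diagonal matrix is strictly negative definite, and congruence by the nonsingular $X^{-1}$ preserves negative definiteness of a Hermitian matrix. Hence $\hat{A}({\bf x}^\ast)$ is a strictly negative definite Hermitian matrix.

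Next I would translate back to the real symmetric $A({\bf x}^\ast)$. For any nonzero real ${\bf v}$, the scalar ${\bf v}^T \hat{A}({\bf x}^\ast){\bf v} = {\bf v}^H \hat{A}({\bf x}^\ast){\bf v}$ is real and negative, and coincides with ${\bf v}^T A({\bf x}^\ast){\bf v}$ since $A = \mathrm{Re}(\hat{A})$. Thus $A({\bf x}^\ast)$ is strictly negative definite as a real symmetric matrix. Because $f\in C^1$, the map ${\bf z}\mapsto A({\bf z})$ is continuous, and strict negative definiteness is an open condition on symmetric matrices, so a sufficiently small closed ball $U$ centered at ${\bf x}^\ast$ is a compact, star-shaped neighborhood on which $A({\bf z})$ remains strictly negative definite. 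Applying Theorem \ref{thm-Lyapunov-cont} to $U$ then yields that $L$ is a Lyapunov function on $U$.

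I do not foresee any real obstacle: the computation is essentially the classical Lyapunov equation for a diagonalizable linear part, and the only bit requiring care is the bookkeeping with Hermitian transposes when passing between $\hat{A}$ and $A$, together with the identification of ${\bf v}^T A{\bf v}$ for real ${\bf v}$ with the (necessarily real) value ${\bf v}^H \hat{A}{\bf v}$. The conceptually crucial point is simply that the sign rule (\ref{sign-change-cont}) was designed so that $i_i\,\mathrm{Re}(\lambda_i)<0$ for every $i$, which is exactly what powers the negative definiteness at the equilibrium.
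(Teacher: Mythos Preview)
Your proof is correct and follows essentially the same route as the paper: both pass to the Hermitian matrix $\hat A({\bf x}^\ast)=X^{-H}(\Lambda^H I^\ast + I^\ast \Lambda)X^{-1}=-2\,X^{-H}|\mathrm{Re}(\Lambda)|X^{-1}$, use the identity ${\bf v}^T\hat A{\bf v}={\bf v}^T A{\bf v}$ for real ${\bf v}$ to transfer negative definiteness to $A({\bf x}^\ast)$, and then extend by continuity. The only cosmetic difference is that the paper concludes by repeating the derivative estimate from the proof of Theorem~\ref{thm-Lyapunov-cont}, whereas you (equivalently and slightly more cleanly) take a closed ball on which $A({\bf z})$ stays negative definite and invoke Theorem~\ref{thm-Lyapunov-cont} directly.
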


\begin{proof}
It is sufficient to check the negative definiteness of the Hermitian matrix
\begin{equation*}
A^\ast = (Df({\bf x}^\ast))^H \hat{Y} + \hat{Y} Df({\bf x}^\ast)
\end{equation*}
instead of $A({\bf x}^\ast)$ in order to prove the negative definiteness of $A({\bf x}^\ast)$, which follows from the fact that ${\bf z}^T \hat Y {\bf z} = {\bf z}^T \mbox{Re}(\hat Y) {\bf z}$ for any real vector ${\bf z}$.
\par
By the definition of the Hermitian matrix $\hat{Y}$, we get
\begin{align*}
A^\ast &= (Df^\ast)^H X^{-H} I^\ast X^{-1} + X^{-H} I^\ast X^{-1}Df^\ast \\
    &= X^{-H} \Lambda^H I^\ast X^{-1} + X^{-H} I^\ast \Lambda X^{-1}\\
    &= X^{-H}(2\,\mbox{Re}(\Lambda) I^\ast)X^{-1}\\
    &= -2\, X^{-H}|\mbox{Re}(\Lambda)| X^{-1},
\end{align*}
where $|\mbox{Re}(\Lambda)|$ is the matrix whose entry is the absolute value of the corresponding entry of $\mbox{Re}(\Lambda)$.
This yields that $A^\ast$ is negative definite Hermitian matrix. 
Therefore, in a sufficiently small neighborhood of $x^\ast$, say a ball $\{{\bf x}\in \mathbb{R}^n \mid |{\bf x} - {\bf x}^\ast| < \epsilon\}$ for sufficiently small $\epsilon > 0$, we obtain
\begin{equation*}
\displaystyle \frac{d}{dt}L({\bf x}) < 0, \qquad \forall {\bf x}\in \{{\bf x}\in \mathbb{R}^n \mid 0< |{\bf x} - {\bf x}^\ast| < \epsilon\}
\end{equation*}
following the proof of Theorem \ref{thm-Lyapunov-cont}, which implies that $L({\bf x})$ is a Lyapunov function in such a neighborhood since $\displaystyle \frac{d}{dt}L({\bf x^\ast})=0$ also holds. 
\end{proof}

This proposition indicates that, like Stable Manifold Theorem, {\em all hyperbolic equilibria locally admit Lyapunov functions of the form (\ref{Lyapunov-flow})}.

\subsection{$L$ determines the eigenstructure of $Y$.}

The symmetric matrix $Y$ is typically determined by the matrix consisting of eigenvectors at a point ${\bf x}^\ast$: an equilibrium for example.
If ${\bf x}^\ast$ is hyperbolic and $Y$ is constructed by eigenvectors at ${\bf x}^\ast$, then $Y$ is nonsingular by definition.
In many practical situations, by contrast, $Y$ is determined only numerically, or constructed by a matrix $X$ close to the eigenmatrix, which imply that there may be a case that $Y$ is singular.
Conversely, if $L({\bf x})=({\bf x}-{\bf x}^\ast)^TY({\bf x}-{\bf x}^\ast)$ is a Lyapunov function on $D_L$ for {\em some} real symmetric matrix $Y$, then $L$ determines the algebraic structure of $Y$, which is stated as follows.

Recall that the {\em stable manifold} of a hyperbolic equilibrium ${\bf x}^\ast$ is the set 
\begin{equation*}
W^s({\bf x}^\ast) = \{{\bf x}\in \mathbb{R}^n \mid \varphi(t,{\bf x})\to {\bf x}^\ast \text{ as }t\to +\infty\}.
\end{equation*}
Similarly, the {\em unstable manifold} of ${\bf x}$ is the set 
\begin{equation*}
W^u({\bf x}^\ast) = \{{\bf x}\in \mathbb{R}^n \mid \varphi(t,{\bf x})\to {\bf x}^\ast \text{ as }t\to -\infty\}.
\end{equation*}
Without the loss of generality, we may assume that ${\bf x}^\ast$ is the origin in $\mathbb{R}^n$ by translating ${\bf x}^\ast$ to the origin.

\begin{proposition}
\label{prop-Y-cont}
Consider a functional $L({\bf x}) = {\bf x}^TY{\bf x}$ for some real symmetric matrix $Y$.
Assume that the origin ${\bf 0}$ is a hyperbolic equilibrium of (\ref{autonomous}) such that $Df({\bf 0})$ has $u$ eigenvalues of positive real part and $s = n-u$ eigenvalues of negative real part.
Assume that, for a compact star-shaped domain $D_L$ with ${\bf 0}\in {\rm int}D_L$, the following inequality holds:
\begin{equation*}
\frac{dL}{dt}(\varphi(t,{\bf x}))_{t=0} < 0\quad \forall {\bf x}\in D_L\setminus \{{\bf 0}\}.
\end{equation*}
Then $Y$ is non-singular and has $u$ negative eigenvalues and $s = n-u$ positive eigenvalues.
\end{proposition}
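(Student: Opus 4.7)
The plan is to prove the proposition in two main steps: first show $Y$ is nonsingular by a direct observation; then determine its inertia via the (un)stable manifolds and a classical inequality on quadratic forms.

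For nonsingularity, if $Y\mathbf{v}=\mathbf{0}$ for some $\mathbf{v}\neq \mathbf{0}$, then for every sufficiently small $\epsilon>0$ we have $\epsilon\mathbf{v}\in D_L$ and, by the symmetry of $Y$,
\[
\frac{dL}{dt}(\varphi(t,\epsilon\mathbf{v}))\Big|_{t=0} = 2\epsilon\,(Y\mathbf{v})^T f(\epsilon\mathbf{v}) = 0,
\]
contradicting the strict negativity of $dL/dt$ on $D_L\setminus\{\mathbf{0}\}$. Hence $\ker Y=\{\mathbf{0}\}$.

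For the inertia, I would invoke the Stable/Unstable Manifold Theorem to obtain local invariant manifolds $W^s_{\mathrm{loc}}$ and $W^u_{\mathrm{loc}}$ through $\mathbf{0}$, of dimensions $s$ and $u$, tangent to the stable and unstable eigenspaces $E^s$ and $E^u$ of $Df(\mathbf{0})$, and small enough that forward (respectively backward) orbits originating on them remain in $D_L$ and converge to $\mathbf{0}$. Strict monotonicity of $L$ along such orbits, together with $L(\mathbf{0})=0$, then yields $L>0$ on $W^s_{\mathrm{loc}}\setminus\{\mathbf{0}\}$ and $L<0$ on $W^u_{\mathrm{loc}}\setminus\{\mathbf{0}\}$. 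Parameterizing $W^s_{\mathrm{loc}}$ as a graph $\mathbf{v}\mapsto \mathbf{v}+h^s(\mathbf{v})$ over $E^s$ with $h^s(\mathbf{v})\in E^u$ and $h^s(\mathbf{v})=O(\|\mathbf{v}\|^2)$, the Taylor expansion
\[
L(\mathbf{v}+h^s(\mathbf{v})) = \mathbf{v}^T Y\mathbf{v} + O(\|\mathbf{v}\|^3),
\]
combined with $L>0$ for small $\mathbf{v}\neq \mathbf{0}$, forces $\mathbf{v}^T Y\mathbf{v}\geq 0$ on $E^s$; the same reasoning on $W^u_{\mathrm{loc}}$ gives $\mathbf{v}^T Y\mathbf{v}\leq 0$ on $E^u$.

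The proof concludes with the classical fact that for a nonsingular real symmetric matrix $Y$ with $p$ positive and $q=n-p$ negative eigenvalues, the maximal dimension of a subspace on which $Y$ is positive semi-definite equals exactly $p$ (and analogously $q$ for the negative side). Applied to $E^s$ and $E^u$, this gives $s\leq p$ and $u\leq q$, and since $s+u=n=p+q$ we must have $p=s$ and $q=u$. The step I expect to be most delicate is ensuring the orbits remain inside $D_L$ long enough for the monotonicity argument to propagate from the asymptotic value $L(\mathbf{0})=0$ back to the initial condition; this is handled by shrinking the local manifolds $W^{s}_{\mathrm{loc}}$ and $W^u_{\mathrm{loc}}$ so that they together with the requisite orbit segments lie in $D_L$, which is possible by the contracting behavior on $W^s_{\mathrm{loc}}$ and the time-reversed contracting behavior on $W^u_{\mathrm{loc}}$.
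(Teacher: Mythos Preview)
Your proof is correct and follows essentially the same approach as the paper: both establish nonsingularity via the null-vector contradiction, use the Stable Manifold Theorem to show $L>0$ on $W^s_{\mathrm{loc}}\setminus\{\mathbf{0}\}$ and $L<0$ on $W^u_{\mathrm{loc}}\setminus\{\mathbf{0}\}$, and then pass to the tangent eigenspaces $E^s$, $E^u$ via the graph parameterization and a second-order Taylor expansion. The only cosmetic difference is that you package the final step as a direct application of the maximal (semi)definite subspace fact, whereas the paper runs the same dimension count as an explicit contradiction (assuming $u'<u$ and producing a point on $W^u$ with $L>0$).
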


\begin{proof}
Suppose that $Y$ has a null eigenvector ${\bf x}\not = {\bf 0}$. 
Since the norm $\|{\bf x}\|$ can be chosen arbitrarily, by taking $\|{\bf x}\|$ sufficiently small, we may assume that the vector ${\bf x}$ is identified with a point ${\bf x}\in D_L$.
Then
\begin{equation*}
\frac{dL}{dt}(\varphi(t,{\bf x}))_{t=0} = f({\bf x})^T Y{\bf x} + {\bf x}^T Y f({\bf x}) = 0
\end{equation*}
since $Y$ is real and symmetric, which contradicts $(dL/dt)(\varphi(t,{\bf x}))_{t=0} < 0$.
The matrix $Y$ is thus nonsingular.
\par
Next, choose ${\bf x}$ on $W^s({\bf 0})\setminus \{{\bf 0}\}$, which can be achieved by choosing ${\bf x}$ sufficiently close to ${\bf 0}$, if necessary, by the Stable Manifold Theorem with respect to the origin.
We then have
\begin{equation*}
\frac{dL}{dt}(\varphi(t,{\bf x}))_{t=0} < 0\quad (\forall t > 0)\quad \text{ and }\quad \lim_{t\to +\infty} L(\varphi(t,{\bf x})=0,
\end{equation*}
which implies $L({\bf x}) > 0$.
Similarly, if ${\bf x}$ is on $W^u({\bf 0})\setminus \{{\bf 0}\}$, then $L({\bf x}) < 0$.
Arguments of the Rayleigh quotient thus yield that $Y$ has, at least, one positive eigenvalue and one negative eigenvalue.
\par
Let $\{{\bf u}_1, \cdots, {\bf u}_{u'}\}$ be eigenvectors of $Y$ associated with negative eigenvalues, and $\{{\bf v}_1, \cdots, {\bf v}_{s'}\}$ be eigenvectors of $Y$ associated with positive eigenvalues.
We choose these vectors so that any two vectors in $\{{\bf u}_1, \cdots, {\bf u}_{u'}\}$ and $\{{\bf v}_1, \cdots, {\bf v}_{s'}\}$ are orthogonal to each other.
Regularity of $Y$ implies that $u'+s'=n$.
Our claim here is $u'=u$ and $s'=s$.
\par
\bigskip
Let $\{{\bf u}_1^\ast, \cdots, {\bf u}_{u}^\ast\}$ be eigenvectors of $Df({\bf x}^\ast)$ associated with eigenvalues whose real parts are positive, and $\{{\bf v}_1^\ast, \cdots, {\bf v}_s^\ast\}$ be eigenvectors of $Df({\bf x}^\ast)$ associated with eigenvalues whose eigenvalues are negative.
Now suppose that $u'<u$. 
There are a nontrivial collection of coefficients $(\alpha_1, \cdots, \alpha_u)$ such that the vector ${\bf u}^\ast := \sum_{j=1}^u \alpha_j {\bf u}^\ast_j$ is orthogonal to ${\bf u}_j$ for all $j=1,\cdots, u'$.
This implies that ${\bf u}^\ast \in {\rm Span}\{{\bf v}_1, \cdots, {\bf v}_{s'}\}$. 
Therefore $L({\bf u}^\ast) \equiv l^\ast > 0$ holds from the property of $L$ as the quadratic form and the fact that eigenvalues associated with ${\bf v}_j$ are positive.
\par
We choose a local coordinate $(x_1,\cdots, x_u, y_1,\cdots, y_s)$ so that any point ${\bf x}$ close to the origin is represented as
\begin{equation*}
{\bf x} = \sum_{j=1}^u x_j {\bf u}_j^\ast + \sum_{j=1}^s y_j {\bf v}_j^\ast.
\end{equation*}
We identify a point ${\bf x}$ with its coordinate representation $(x_1,\cdots, x_u, y_1,\cdots, y_s)$.
By the Stable Manifold Theorem (e.g. \cite{Rob}), there are a sufficiently small $\epsilon > 0$ and a smooth map $\Psi : B_u({\bf 0},\epsilon)\to \mathbb{R}^s$ such that the unstable manifold $W^u({\bf 0})$ is represented by the graph of $\Psi$ :
\begin{align*}
&(y_1,\cdots, y_s) = \Psi(x_1,\cdots, x_u),\\
&(x_1,\cdots, x_u, y_1,\cdots, y_s) \in W^u(0) \text{ with } (x_1,\cdots, x_u)\in B_u({\bf 0},\epsilon),
\end{align*}
and that $\|\Psi(x_1,\cdots, x_u)\| = O(\|(x_1,\cdots, x_u)\|^2)$ as $(x_1,\cdots, x_u)\to (0,\cdots, 0)$, where $B_u({\bf x},\epsilon)$ denotes the $u$-dimensional closed ball with the radius $\epsilon$ centered at a point ${\bf }x$.
\par
Let ${\bf z}^\ast = \delta {\bf u}^\ast + {\bf v}^\ast$ be a vector with a positive $\delta >0$.
If $\delta > 0$ is sufficiently small, we can uniquely determine the vector ${\bf v}^\ast$ so that ${\bf z}^\ast \in W^u({\bf 0})$.
Let ${\bf v}^\ast = \Psi (\delta \alpha_1, \cdots, \delta \alpha_u)\equiv \Psi (\delta {\bf u}^\ast)$ be such a vector.
Then 
\begin{align*}
({\bf z}^\ast)^T Y {\bf z}^\ast &= \delta^2 ({\bf u}^\ast)^T Y {\bf u}^\ast + \delta ({\bf u}^\ast)^T Y {\bf v}^\ast + \delta ({\bf v}^\ast)^T Y {\bf u}^\ast + ({\bf v}^\ast)^T Y {\bf v}^\ast\\
&= \delta^2 ({\bf u}^\ast)^T Y {\bf u}^\ast + \delta ({\bf u}^\ast)^T Y \Psi (\delta {\bf u}^\ast) + \delta \Psi (\delta {\bf u}^\ast)^T Y {\bf u}^\ast + \Psi (\delta {\bf u}^\ast)^T Y \Psi (\delta {\bf u}^\ast)\\
 &= \delta^2 (l^\ast + O(\delta)) > 0
\end{align*}
if $\delta > 0$ is sufficiently small; i.e., ${\bf z}^\ast$ is sufficiently close to the origin, which follows from the fact $\|\Psi(x_1,\cdots, x_u)\| = O(\|(x_1,\cdots, x_u)\|^2)$ as $(x_1,\cdots, x_u)\to (0,\cdots, 0)$.
This contradicts the fact that $L({\bf z}^\ast) < 0$, since ${\bf z}^\ast \in W^u({\bf 0})$.
\par
Similar argument yields $s'\geq s$, which concludes that $u'=u$ and $s'=s$.
\end{proof}

% New subsection
\subsection{Validation of Lyapunov functions and hyperbolicity of equilibria}
\label{section-hyp-cont}
Now we discuss the negative definiteness of $A({\bf z})$ from the different viewpoint.
Let $\bar{\bf x}$ be a point such that $Df(\bar {\bf x})$ is numerically diagonalized:
\begin{equation*}
\Lambda = X^{-1}Df(\bar {\bf x})X,\quad \Lambda = {\rm diag}(\lambda_1, \cdots, \lambda_n),
\end{equation*}
and that ${\rm Re}(\lambda_i)\not = 0$ for all $i=1,\cdots, n$.
The nonsingular matrix $X$ transforms (\ref{autonomous}) into the following system:
\begin{equation}
\label{diagonal-cont}
\frac{d}{dt}{\bf y} = X^{-1}f({\bf x}^\ast + X{\bf y}),\quad {\bf x} = {\bf x}^\ast + X{\bf y},
\end{equation}
which is close to the diagonalized system in a small neighborhood of ${\bf x}^\ast$.
We then define a quadratic form $\tilde L$ by
\begin{equation}
\label{Lyapunov-simple-cont}
\tilde L({\bf y}) := {\bf y}I^\ast {\bf y},\quad I^\ast = {\rm diag}(i_1,\cdots, i_n),
\end{equation}
where $i_k$'s are given by (\ref{sign-change-cont}).
Let ${\bf x}$ be a point close to ${\bf x}^\ast$. 
%The line segment $\{ {\bf x}^\ast + s({\bf x} - {\bf x}^\ast)\mid s\in [0,1]\}$ corresponds one-to-one to
Direct calculations yield
\begin{equation*}
\displaystyle \frac{d}{dt}\tilde L({\bf y}) =
{\bf y}^T \displaystyle 
\int_0^1 \{  X^T Df({\bf x}^\ast+s({\bf x} - {\bf x}^\ast))^T X^{-T}I^\ast + 
  I^\ast X^{-1}Df({\bf x}^\ast+s({\bf x} - {\bf x}^\ast)) X\}ds\, {\bf y}.
\end{equation*}

In general, the following lemma holds from general linear algebra.
\begin{lemma}
\label{lem-matrix}
Let $A,B$ be $n\times n$ real matrices. Assume that $B$ is diagonal. Then $(AB)^T = BA^T$.
\end{lemma}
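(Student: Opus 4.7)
The plan is to derive the stated identity from the standard transpose-of-a-product formula $(AB)^T = B^T A^T$, which holds for arbitrary conformable real (or complex) matrices. This is a textbook fact proved by comparing entries: the $(i,j)$-entry of $(AB)^T$ is the $(j,i)$-entry of $AB$, namely $\sum_k A_{jk}B_{ki}$, while the $(i,j)$-entry of $B^T A^T$ is $\sum_k (B^T)_{ik}(A^T)_{kj} = \sum_k B_{ki}A_{jk}$, and these agree.

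Once $(AB)^T = B^T A^T$ is in hand, the lemma reduces to observing that a diagonal matrix is symmetric. Indeed, if $B = \mathrm{diag}(b_1,\ldots,b_n)$, then its only nonzero entries lie on the diagonal and $B_{ij} = B_{ji}$ trivially, so $B^T = B$. Substituting this into the general identity gives
\begin{equation*}
(AB)^T = B^T A^T = B A^T,
\end{equation*}
which is exactly the claim.

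There is no genuine obstacle here: the result is elementary, and the only content is the observation that diagonality implies symmetry, which lets one drop the transpose on the $B$ factor. In the context of the paper, this lemma is a convenient bookkeeping tool used just after to rewrite expressions such as $X^T Df(\cdots)^T X^{-T} I^\ast$ by moving the diagonal sign matrix $I^\ast$ through a transpose, so the proof can be stated in one or two lines without further elaboration.
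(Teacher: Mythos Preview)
Your proof is correct and follows exactly the same approach as the paper: invoke $(AB)^T = B^T A^T$ and then use $B^T = B$ for diagonal $B$. The paper states this in a single line, while you add the entrywise justification of the transpose-of-a-product formula, but the argument is identical.
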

\begin{proof}
This lemma directly follows from $(AB)^T = B^TA^T$ and $B^T=B$.
\end{proof}
Lemma \ref{lem-matrix} implies
\begin{align*}
\displaystyle \frac{d}{dt}\tilde L({\bf y}) &=
{\bf y}^T  \displaystyle 
\int_0^1 \{ (X^T Df({\bf x}^\ast+s({\bf x} - {\bf x}^\ast))^T X^{-T})I^\ast + 
  I^\ast X^{-1}Df({\bf x}^\ast+s({\bf x} - {\bf x}^\ast))X \}ds\, {\bf y}\\
&=
{\bf y}^T  \displaystyle 
\int_0^1 \{ (X^{-1} Df({\bf x}^\ast+s({\bf x} - {\bf x}^\ast)) X)^T I^\ast + 
  I^\ast X^{-1}Df({\bf x}^\ast+s({\bf x} - {\bf x}^\ast))X \}ds\, {\bf y}
\end{align*}
Since $X^{-1}Df({\bf x}^\ast)X$ is diagonalizable, The matrix-valued sets
\begin{equation*}
\{X^{-1} Df({\bf x}^\ast+s({\bf x} - {\bf x}^\ast)) X \mid s\in [0,1]\}
\end{equation*}
can be regarded as a short segment with an endpoint $\Lambda = \Lambda^T$ in the set of $n\times n$ matrices.
The Gershgorin Circle Theorem can be applied to the persistence of $\{{\rm sgn} ({\rm Re}(\lambda_i))\}_{i=1}^n$ for matrices on the sets.
Combining this observation and arguments in the proof of Theorem \ref{thm-Lyapunov-cont}, we obtain the following result. 

\begin{theorem}[cf. \cite{Mat}]
\label{thm-hyp-cont}
Let ${\bf x}^\ast$ be an equilibrium for (\ref{autonomous}), and $D_L$ be a compact, star-shaped neighborhood of ${\bf x}^\ast$ in $\mathbb{R}^n$. Define a matrix $\tilde A({\bf z})$ by
\begin{equation*}
\tilde A({\bf z}) = X^{-1} Df({\bf z}) X \equiv \Lambda + V({\bf z}), 
\end{equation*}
where $\Lambda = {\rm diag}(\lambda_1,\cdots, \lambda_n)$ given by $\Lambda = X^{-1}Df(\bar {\bf x})X$ at a point $\bar {\bf x}\in D_L$ and ${\rm Re}(\lambda_i) \not = 0$ for all $i=1,\cdots, n$.
Assume that
\begin{equation}
\label{neg-def-strong-cont}
|{\rm Re}(\lambda_i)| > \sum_{j=1}^n |V({\bf z})_{ij}|\quad \text{ and }\quad |{\rm Re}(\lambda_i)| > \sum_{j=1}^n |V({\bf z})_{ji}|
\end{equation}
holds for all ${\bf z}\in D_L$ and $i=1,\cdots, n$.

Then ${\bf x}^\ast$ is hyperbolic. Moreover, the functional $\tilde L$ given by (\ref{Lyapunov-simple-cont}) is a Lyapunov function on $D_L$.
Finally, ${\bf x}^\ast$ is the unique equilibrium in $D_L$.
\end{theorem}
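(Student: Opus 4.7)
The plan is to read off all three conclusions---hyperbolicity of ${\bf x}^\ast$, the Lyapunov property of $\tilde L$ on $D_L$, and uniqueness of the equilibrium---from two parallel applications of the Gershgorin Circle Theorem: one to $\tilde A({\bf z})$ itself to track the spectrum of $Df({\bf z})$, and one to the symmetric matrix that governs $d\tilde L/dt$. The computation preceding the statement has already reduced the Lyapunov assertion to strict negative definiteness of the matrix integrand, so the bulk of the work becomes an elementary diagonal-dominance calculation using the two bounds in (\ref{neg-def-strong-cont}).

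For hyperbolicity, I would apply Gershgorin row-by-row to $\tilde A({\bf z}) = \Lambda + V({\bf z})$. Every eigenvalue $\mu$ of $\tilde A({\bf z})$ lies in some disk $\{\mu : |\mu - \lambda_i| \leq \sum_j |V({\bf z})_{ij}|\}$, and the first inequality in (\ref{neg-def-strong-cont}) forces this disk to sit strictly on the same side of the imaginary axis as $\lambda_i$. Specialising to ${\bf z} = {\bf x}^\ast$ and invoking the similarity $\tilde A({\bf x}^\ast) = X^{-1}Df({\bf x}^\ast)X$ establishes hyperbolicity; in fact the sign pattern $\{{\rm sgn}({\rm Re}\,\lambda_i)\}_{i=1}^n$ of $Df({\bf z})$ is preserved throughout $D_L$.

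For the Lyapunov property, I would invoke the identity immediately preceding the theorem to write
\begin{equation*}
\frac{d}{dt}\tilde L({\bf y}) = {\bf y}^T \int_0^1 M({\bf z})\, ds\, {\bf y},\qquad M({\bf z}) := I^\ast \tilde A({\bf z}) + \tilde A({\bf z})^T I^\ast,
\end{equation*}
with ${\bf z} = {\bf x}^\ast + s({\bf x}-{\bf x}^\ast)$, which lies in $D_L$ by star-shapedness. A short calculation gives $M({\bf z})_{ii} = 2 i_i(\lambda_i + V({\bf z})_{ii})$, so ${\rm Re}\,M({\bf z})_{ii} = -2|{\rm Re}\,\lambda_i| + 2 i_i {\rm Re}\,V({\bf z})_{ii}$, while for $i \neq j$ the entry $M({\bf z})_{ij} = i_i V({\bf z})_{ij} + i_j V({\bf z})_{ji}$ is bounded in modulus by $|V({\bf z})_{ij}| + |V({\bf z})_{ji}|$. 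Adding the two inequalities in (\ref{neg-def-strong-cont}) yields $2|{\rm Re}\,\lambda_i| > 2|V({\bf z})_{ii}| + \sum_{j \neq i}(|V({\bf z})_{ij}| + |V({\bf z})_{ji}|)$, which is precisely the Gershgorin diagonal-dominance condition making ${\rm Re}\,M({\bf z})$ strictly negative definite. Integrating in $s$ preserves negative definiteness, so $d\tilde L/dt < 0$ on $D_L \setminus \{{\bf x}^\ast\}$, and $\tilde L$ is a Lyapunov function on $D_L$.

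Uniqueness is then immediate in the style of Theorem \ref{thm-Lyapunov-cont}: any second equilibrium ${\bf x}^{\ast\ast} \in D_L$ would satisfy $(d\tilde L/dt)({\bf x}^{\ast\ast}) = 0$ because $f({\bf x}^{\ast\ast}) = 0$, contradicting the strict negativity just established. The most delicate step I anticipate is the bookkeeping when some $\lambda_i$ are non-real, since then $X$ is complex and ${\bf y} = X^{-1}({\bf x}-{\bf x}^\ast)$ is complex-valued; this is handled by consistently passing to ${\rm Re}\,M({\bf z})$ in the quadratic form, exactly the device used to define $Y = {\rm Re}(\hat Y)$ in Section \ref{section-cont-construction}, so that only the real symmetric part controls the sign.
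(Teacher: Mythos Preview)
Your proposal is correct and follows essentially the same route as the paper: Gershgorin applied to $\tilde A({\bf z})$ for hyperbolicity, Gershgorin applied to $\tilde A({\bf z})^T I^\ast + I^\ast \tilde A({\bf z})$ for strict negative definiteness, and then an appeal to the argument of Theorem~\ref{thm-Lyapunov-cont} for the Lyapunov property and uniqueness. Your explicit entrywise bookkeeping, in particular adding the row and column conditions in (\ref{neg-def-strong-cont}) to obtain the diagonal-dominance bound $2|{\rm Re}\,\lambda_i| > 2|V_{ii}| + \sum_{j\neq i}(|V_{ij}|+|V_{ji}|)$, is more detailed than the paper's own proof, which simply asserts that Gershgorin applies; the complex-eigenvalue caveat you flag is likewise treated more briefly in the paper via the remark that $Df({\bf z})$ is real.
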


\begin{proof}
Gershgorin's theorem and the assumption (\ref{neg-def-strong-cont}) imply that the sign of ${\rm Re}(\lambda_i) = {\rm Re}(\lambda_i({\bf z}))$ is identical in $D_L$. This yields that ${\rm sgn}({\rm Re}(\lambda_i({\bf x}^\ast))) = {\rm sgn}({\rm Re}(\lambda_i))$ for all $i$ and hence ${\bf x}^\ast$ is hyperbolic.

Applying Gershgorin's theorem again to the matrix $\tilde A({\bf z})^T I^\ast + I^\ast \tilde A({\bf z})$, one sees that all eigenvalues have negative real parts for all ${\bf z}\in D_L$. Since $Df({\bf z})$ is a real matrix, the matrix $\tilde A({\bf z})^T I^\ast + I^\ast \tilde A({\bf z})$ is strictly negative definite in $D_L$. Thus Theorem \ref{thm-Lyapunov-cont} can be applied to proving that $\tilde L$ is a Lyapunov function in $D_L$ and ${\bf x}^\ast$ is the unique equilibrium in $D_L$.
\end{proof}

\begin{remark}\rm
The Lyapunov function $\tilde L$ is exactly the form discussed in \cite{Mat}.
Note that forms of Lyapunov functions $L$ and $\tilde L$ are exactly the same, while the condition in Theorem \ref{thm-hyp-cont} is a little stronger than Theorem \ref{thm-Lyapunov-cont}.
Indeed, Theorem \ref{thm-Lyapunov-cont} guarantees (i) the existence of Lyapunov functions and (ii) local uniqueness of equilibria, while Theorem \ref{thm-hyp-cont} guarantees (i) the existence of Lyapunov functions, (ii) local uniqueness of equilibria and (iii) hyperbolcity of equilibria.
The difference of conditions comes from which the negative definiteness of $Df({\bf x})^T Y + YDf({\bf x})$ or $X^{-1}Df({\bf x})X$ is concerned.
\end{remark}

\subsection{Equivalence to cones: description of the stable and the unstable manifolds}
Asymptotic behavior of dynamical systems around hyperbolic equilibria deeply relates to descriptions of their stable and unstable manifolds. 
%Recall that the {\em stable manifold} of a hyperbolic equilibrium ${\bf x}^\ast$ is the set 
%\begin{equation*}
%W^s({\bf x}^\ast) = \{{\bf x}\in \mathbb{R}^n \mid \varphi(t,{\bf x})\to {\bf x}^\ast \text{ as }t\to +\infty\}.
%\end{equation*}
%Similarly, the {\em unstable manifold} of ${\bf x}$ is the set 
%\begin{equation*}
%W^u({\bf x}^\ast) = \{{\bf x}\in \mathbb{R}^n \mid \varphi(t,{\bf x})\to {\bf x}^\ast \text{ as }t\to -\infty\}.
%\end{equation*}
Since $L({\bf x}^\ast) = 0$ holds, by the definition of $W^s({\bf x})$ and $L$, it immediately holds that $W^s({\bf x}^\ast)\cap D_L\subset L^{-1}(0, +\infty)\cap D_L$.
Similarly, $W^u({\bf x}^\ast)\cap D_L\subset L^{-1}(-\infty, 0)\cap D_L$.

\bigskip
Now we refer to the concept of {\em cones} in, e.g., \cite{C, ZCov}.
In these references {\em a cone} is defined as a quadratic form $Q({\bf z}) = Q_1({\bf z}) - Q_2({\bf z})$ such that $Q_1$ and $Q_2$ are positive definite. 
A connection between $Q$ and stable and unstable manifolds of equilibria are well discussed in \cite{ZCov}; 
namely, under {\em the cone condition}:
\begin{equation}
\label{cone-cond-cont}
\frac{d}{dt}(Q(\varphi(t,{{\bf z}_1})) - Q(\varphi(t,{{\bf z}_2}))) > 0\text{ for all } {\bf z}_1, {\bf z}_2 \in N
\end{equation}
in a given contractible set with special property called {\em an $h$-set} $N$, the stable and unstable manifolds of an equilibrium in $N$ can be described by graphs of Lipschizian functions (see Lemma 27 in \cite{ZCov}). 
Although \cite{ZCov} only describes the essential idea in abstract settings, this property gives us descriptions of locally invariant manifolds in explicitly given domains.
Applications of cone conditions with computer assistance are discussed in \cite{KWZ2007, Mat2}.

In fact, the strict negative definiteness of $A({\bf z})$ in (\ref{matrix-neg-def}) is intrinsically the same as a sufficient condition of cone conditions stated in (\ref{cone-cond-cont}). 
This fact indicates that a condition with respect to $A({\bf z})$ enables us to study asymptotic behavior around (hyperbolic) equilibria from the viewpoint of both Lyapunov functions and cones.

\bigskip
Now consider the case that the negative definiteness of $A({\bf z})$ does not hold in a domain $D$. 
In such a case, there is a non-trivial problem if the existence of a Lyapunov function in $D$ implies that of a cone $Q$ in satisfying the cone condition in $D$, and vice versa.
There may be a case that a Lyapunov domain $D$ validated in Stage 2 (Section \ref{section-stage2-cont}) does not satisfy the cone condition in general.
In other words, there is a domain where we cannot consider Lyapunov functions and cones in the same terms.
If it is the case, we then have to develop and consider the theory of asymptotic behavior in $D$ in terms of cones and Lyapunov functions independently.

\begin{remark}\rm
Prof. P. Zgliczy\'{n}ski commented to the first author \cite{MZ} that Lyapunov functions $L({\bf x})$ focus on the behavior of {\em a point} along solution orbits, and that cone conditions focus on {\em the difference between two points} along solution orbits.
These differences can be seen in Definition \ref{dfn-Lyapunov-typical} and (\ref{cone-cond-cont}).
\end{remark}

\begin{remark}[Cones and ${\bf m}$-Lyapunov functions]\rm
The former idea of ${\bf m}$-Lyapunov functions (Section \ref{section-m-Lyapunov-cont}) is introduced in \cite{Mat2}, where it is said {\em $m$-cones}, so that the enclosure of the stable and the unstable manifolds of equilibria are sharpened more and more. The same idea is introduced by Capi\'{n}ski \cite{C} as well as Zgliczy\'{n}ski \cite{ZCov} before \cite{Mat2} in terms of cones with arbitrary choice.
In \cite{Mat2}, the concept of $m$-cones is introduced with identical ratios, i.e., $m_j$'s are $1$ (resp. $m$) for stable eigendirections and $m$ (resp. $1$) for unstable eigendirections, where $m > 1$ is a given positive number. 
Our ${\bf m}$-Lyapunov functions generalize the choice of $m_j$'s.

One of the powerful properties of our procedure of Lyapunov functions as well as ${\bf m}$-Lyapunov functions is that enclosures of locally invariant manifolds around invariant sets can be described by quadratic functions.
We comment that the extension of regions where ${\bf m}$-Lyapunov functions are validated is applied to validations of the stable and the unstable manifolds of equilibria in singular perturbation problems \cite{Mat2} in terms of $m$-cones.
\end{remark}

% New subsubsection
\subsection{Lyapunov tracing: transversality of flows and re-parametrization of trajectories}
By definition, $dL/dt < 0$ always holds off equilibria, which implies that all solution orbits intersect transversally with level sets $L^{-1}(s)\cap D_L$ for all $s\in \mathbb{R}$, if exist. 
If an equilibrium ${\bf x}^\ast$ is asymptotically stable, then the level set $L^{-1}(s)$ is either an ellipse or ellipsoid, in which case only the interior of ellipsoids in $D_L$ can be positively invariant for (\ref{autonomous}).
If an equilibrium ${\bf x}^\ast$ is a saddle, the level set $L^{-1}(s)$ is a hyperbolic surface except $L^{-1}(0)$; the surface of a cone.
%Such explicit description 
The above fact helps us with constructing crossing sections or isolating blocks in the Conley index theory of flows (e.g. \cite{Con}) explicitly.
Moreover, along a solution orbits with a given initial data, the time variable $t$ smoothly corresponds {\em one-to-one} to the value $s$ of Lyapunov functions as long as the orbit stays in $D_L$, which leads to a local {\em re-parametrization} of trajectories $\varphi(t,{\bf x}) \mapsto \tilde \varphi(L,{\bf x})$. 
More precisely, the following theorem holds.
\begin{theorem}[Retraction via Lyapunov tracing]
\label{thm-LT}
Let $D_L$ be a compact star-shaped Lyapunov domain containing an equilibrium ${\bf x}^\ast$ associated with a Lyapunov function $L$.
Assume that the boundary of the stable cone $B_s \equiv D_L\cap L^{-1}(0,\infty)$ consists of $b_s := L^{-1}(0)\cap D_L$ and a collection of hypersurfaces $\{S_i\}$ which are the immediate entrance in the sense that, for any ${\bf x}\in S_i$, there is a positive number $\epsilon_{\bf x} > 0$ such that
\begin{equation*}
\varphi((0,\epsilon_{\bf x}),{\bf x})\subset {\rm int} (D_L\cap L^{-1}(0,\infty)),\quad \varphi((-\epsilon_{\bf x},0),{\bf x})\cap {\rm int} (D_L\cap L^{-1}(0,\infty)) = \emptyset.
\end{equation*}
See Fig. \ref{fig-retract}.
\par
Define the map $R : B_s \to b_s$ by
\begin{equation*}
R({\bf x}) := \varphi(\tau({\bf x}), {\bf x}),\quad  \tau({\bf x}) := \sup\{t\geq 0\mid \varphi([0,t],{\bf x})\subset B_s\}.
\end{equation*}
Then $R$ is continuous. 
Moreover, $b_s$ is the strong deformation retract of $B_s$.
\end{theorem}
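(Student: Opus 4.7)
First I would pin down the definition of $\tau({\bf x}) \in [0,\infty]$ and give $R$ a sensible extension. Since each $S_i$ is an immediate entrance, no forward orbit starting in $B_s$ can exit through $S_i$: if it did, then at the exit point the backward orbit would, for small times, have to lie outside $\mathrm{int}(B_s)$, contradicting that just before the exit the orbit was in $B_s$. Hence whenever $\tau({\bf x})<\infty$ the orbit exits through $b_s$, and whenever $\tau({\bf x})=\infty$ the orbit stays in $B_s$ for all $t\ge 0$. In the latter case, since $L\circ\varphi$ is strictly decreasing, bounded below by $0$, with $D_L$ compact and ${\bf x}^\ast$ the unique equilibrium in $D_L$ (Theorem \ref{thm-Lyapunov-cont}), a LaSalle-type argument forces $\varphi(t,{\bf x})\to {\bf x}^\ast$ as $t\to\infty$. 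Accordingly I set $R({\bf x}):={\bf x}^\ast$ on $\{\tau=\infty\}$, which necessarily lies in $W^s({\bf x}^\ast)\cap B_s$.

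Next, continuity of $R$. For ${\bf x}$ with $\tau({\bf x})<\infty$ we have $R({\bf x})\ne {\bf x}^\ast$, so $dL/dt<0$ at $R({\bf x})$ and the flow crosses $b_s$ transversally there; applying the implicit function theorem to $L\circ\varphi(t,\cdot)=0$ yields continuity of $\tau$, and joint continuity of $\varphi$ then gives continuity of $R$. The delicate case is ${\bf x}\in W^s({\bf x}^\ast)\cap B_s$. For any sequence ${\bf x}_n\to {\bf x}$, the estimate $\varphi(T,{\bf x}_n)\to\varphi(T,{\bf x})\in\mathrm{int}(B_s)$ for every fixed $T>0$ gives $\tau({\bf x}_n)\to\infty$. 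By compactness of $b_s\cap D_L$, pick a subsequential limit $R({\bf x}_{n_k})\to y\in b_s\cap D_L$; I claim $y={\bf x}^\ast$. If instead $y\ne{\bf x}^\ast$, then $dL/dt(y)<0$, so the backward orbit of $y$ enters $B_s$ with $L$ strictly increasing. Since ${\bf x}^\ast$ is the only equilibrium, the backward orbit cannot remain in $D_L$ indefinitely (by LaSalle applied in reverse), and since it cannot cross $b_s$ either (as $L>0$ on it), it must exit through some $S_{i_0}$ at a finite backward time $T^\ast>0$. By the immediate-entrance condition, $\varphi(-T^\ast-\varepsilon,y)\notin D_L$ for small $\varepsilon>0$. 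Joint continuity of $\varphi$ gives $\varphi(\tau({\bf x}_{n_k})-T^\ast-\varepsilon,{\bf x}_{n_k})\to \varphi(-T^\ast-\varepsilon,y)\notin D_L$, but for $k$ large the argument on the left belongs to $[0,\tau({\bf x}_{n_k})]$ (since $\tau({\bf x}_{n_k})\to\infty$), so the point lies in $B_s\subset D_L$, a contradiction. Hence $y={\bf x}^\ast$.

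For the strong deformation retract I would reparameterize each orbit by the value of $L$, exactly the Lyapunov tracing idea introduced in the subsection. For ${\bf x}\in B_s$ let $t({\bf x},\ell)$ denote the unique time at which $L(\varphi(t,{\bf x}))=\ell$, well-defined and smooth for $\ell\in(0,L({\bf x})]$ because $L$ is strictly decreasing along orbits; extend by $t({\bf x},0):=\tau({\bf x})$. Define
\begin{equation*}
H({\bf x},s):=\varphi\bigl(t({\bf x},(1-s)L({\bf x})),\,{\bf x}\bigr)\ \text{for } s\in[0,1),\quad H({\bf x},1):=R({\bf x}).
\end{equation*}
Then $H(\cdot,0)=\mathrm{id}$, $H(\cdot,1)=R$, and for ${\bf y}\in b_s$ we have $L({\bf y})=0$, so $(1-s)L({\bf y})=0$, $t({\bf y},0)=0$, and $H({\bf y},s)={\bf y}$ for every $s$. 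Continuity of $H$ on $(B_s\setminus b_s)\times[0,1)$ follows from the implicit function theorem applied to $L\circ\varphi=(1-s)L({\bf x})$. For continuity at $({\bf x},1)$ with ${\bf x}\in W^s$, note $L(H({\bf x}_n,s_n))=(1-s_n)L({\bf x}_n)\to 0$ and the same backward-orbit exit argument as above rules out any subsequential limit other than ${\bf x}^\ast=R({\bf x})$.

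The main obstacle is the case $\tau({\bf x})=\infty$, where $R$ is not defined by a finite hitting time and where $\tau$ is discontinuous in the classical sense. Overcoming this is exactly where the immediate-entrance hypothesis on the $\{S_i\}$ is essential: it provides the only tool strong enough to rule out that orbits from near the stable manifold exit $B_s$ far away from ${\bf x}^\ast$. Everything else in the proof is routine once this continuity is established.
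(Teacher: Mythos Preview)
Your argument is correct and takes a genuinely different route from the paper's. The paper proves continuity of $R$ (and of the homotopy) by writing $R$ explicitly as the $L$-reparameterized integral
\[
R({\bf x}_0)={\bf x}_0+\int_{L({\bf x}_0)}^{0}\frac{f({\bf x}(L))}{f({\bf x}(L))^T Y({\bf x}(L)-{\bf x}^\ast)+({\bf x}(L)-{\bf x}^\ast)^T Y f({\bf x}(L))}\,dL,
\]
splitting the range into $[L_0,\epsilon]$ and $[\epsilon,0]$, linearizing the integrand near ${\bf x}^\ast$, and then changing variables from $L$ to $r=\|{\bf x}-{\bf x}^\ast\|$; the point of that computation is that the resulting integrand becomes $r$-independent, so the integral extends continuously across $W^s({\bf x}^\ast)$. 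You instead argue topologically: compactness of $b_s$ together with a backward-orbit contradiction, powered by the immediate-entrance hypothesis on the $S_i$, forces every subsequential limit of $R({\bf x}_n)$ to equal ${\bf x}^\ast$. Your approach is more robust---it uses only that $L$ is a strict $C^1$ Lyapunov function with a unique equilibrium in $D_L$, not the specific quadratic structure---while the paper's approach is constructive and exhibits the Lyapunov-tracing integral explicitly, which is precisely what the authors want to highlight for downstream applications such as \cite{TMSTMO2016}. Both proofs build the homotopy by the same $L$-level reparameterization; the difference lies entirely in how continuity near $W^s({\bf x}^\ast)$ is established. One small point to tighten in your write-up: the claim $\tau({\bf x}_n)\to\infty$ is cleaner if argued by contradiction (a bounded subsequence $\tau({\bf x}_{n_k})\to T^\ast<\infty$ would force $L(\varphi(T^\ast,{\bf x}))=\lim L(R({\bf x}_{n_k}))=0$, contradicting $L>0$ along the stable orbit), since merely knowing $\varphi(T,{\bf x}_n)\in\mathrm{int}(B_s)$ does not by itself control the full segment $\varphi([0,T],{\bf x}_n)$.
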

Note that we say a subset $A$ of a compact set $X$ is a {\em strong deformation retract} if there exists a continuous map $R : X\times [0,1]\to X$ such that $R({\bf x},t) = {\bf x}$ for all ${\bf x}\in A$, $R({\bf x},1)\in A$ for all ${\bf x}\in X$ and $R({\bf x},0) = {\bf x}$ for all ${\bf x}\in X$.

\begin{figure}[htbp]\em
\begin{minipage}{1\hsize}
\centering
\includegraphics[width=6.0cm]{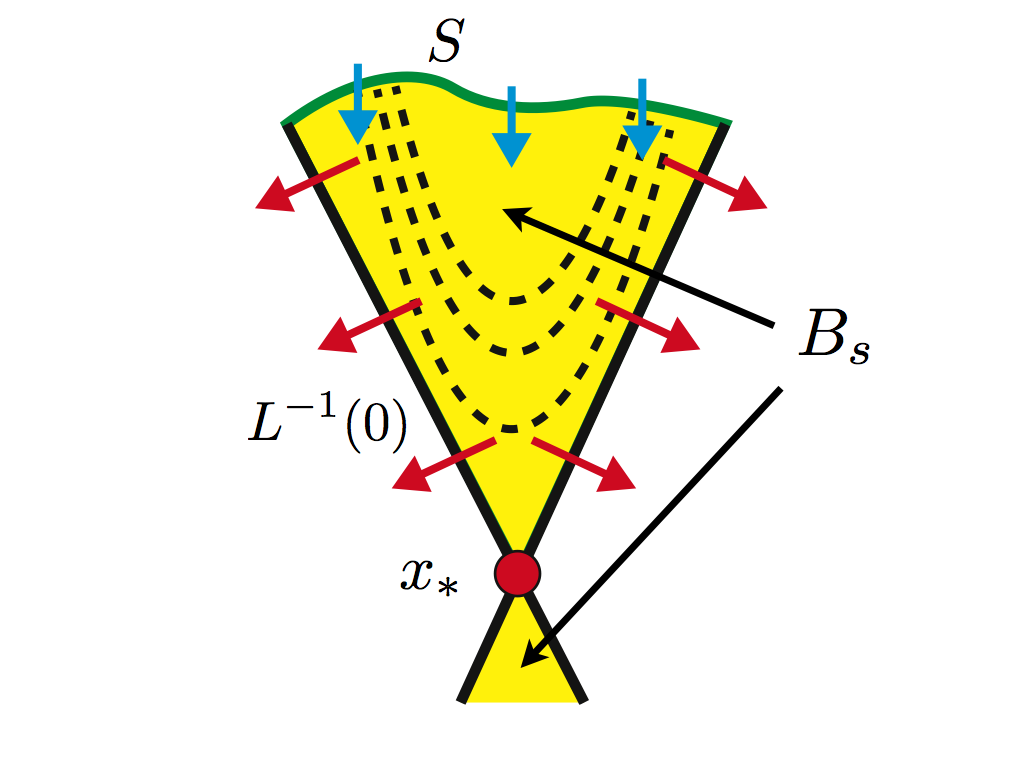}
\end{minipage}
\caption{The positive cone $B_s$ and dynamics inside it}
\label{fig-retract}
\begin{flushleft}
The yellow region shows the set $B_s \equiv D_L\cap L^{-1}(0,\infty)$ associated with a Lyapunov function $L$ in Theorem \ref{thm-LT}. 
Black lines denote the zero level set $L^{-1}(0)$ of the Lyapunov function $L$.
Dotted curves denote the family of positive level sets $\{L^{-1}(s)\cap D_L \mid s> 0\}$ of $L$.
The green curve $S$ shows a hypersurface being the immediate entrance of $B_S$.
In this setting, black lines which are also boundaries of the yellow region is $b_s$.
\end{flushleft}
\end{figure}

\begin{proof}
First note that $b_s\setminus \{{\bf x}^\ast\}$ is the immediate exit of $B_s$ in the sense that, for any ${\bf x}\in b_s\setminus \{{\bf x}^\ast\}$, there is a positive number $\epsilon_{\bf x} > 0$ such that
\begin{equation*}
\varphi((0,\epsilon_{\bf x}),{\bf x})\cap (D_L\cap L^{-1}(0,\infty)) = \emptyset,\quad \varphi((-\epsilon_{\bf x},0),{\bf x})\cap b_s\setminus \{{\bf x}^\ast\} = \emptyset.
\end{equation*}
Let ${\bf x}_0 \in B_s$ be an arbitrary point.
Then the solution orbit ${\bf x}(t)\equiv \varphi(t,{\bf x}_0)$ through ${\bf x}_0$ can be written by
\begin{equation*}
{\bf x}(t) = {\bf x}_0 + \int_0^t f({\bf x}(s))ds.
\end{equation*}
A Lyapunov function $L$ has the form $L({\bf x}) = ({\bf x}-{\bf x}^\ast)^T Y (x-{\bf x}^\ast)$ and it immediately follows that
\begin{equation*}
\frac{dL}{dt}({\bf x}(t))\mid_{t=0} = f({\bf x}_0)^T Y ({\bf x}_0 - {\bf x}^\ast) + ({\bf x}_0-{\bf x}^\ast)^T Y f({\bf x}_0).
\end{equation*}

Let $L_0 = L({\bf x}_0)$ and $L_t = L({\bf x}(t))$. 
Note that $t$ and $L_t = L({\bf x}(t))$ corresponds homeomorphically along the solution orbit $\{{\bf x}(t)\}$.
The solution ${\bf x}(t)\equiv {\bf x}(L)$ can be thus formally re-written by
\begin{equation}
\label{L-integral}
{\bf x}(L_t) = {\bf x}(L_0) + \int_{L_0}^{L_t} \frac{f({\bf x}(L))}{ f({\bf x}(L))^T Y ({\bf x}(L) - {\bf x}^\ast) + ({\bf x}(L)-{\bf x}^\ast)^T Y f({\bf x}(L))}dL.
\end{equation}
Since $B_s$ is a subset of the Lyapunov domain $D_L$, then $dL/dt \not = 0$ off ${\bf x}^\ast \in L^{-1}(0)$. 
The mapping $R({\bf x}_0)$ is thus given by
\begin{equation*}
R({\bf x}_0) = {\bf x}(0) = {\bf x}(L_0) + \int_{L_0}^{0} \frac{f({\bf x}(L))}{ f({\bf x}(L))^T Y ({\bf x}(L) - {\bf x}^\ast) + ({\bf x}(L)-{\bf x}^\ast)^T Y f({\bf x}(L))}dL.
\end{equation*}
We shall prove that this expression makes sense.
To this end, we divide the integral into two parts: $\int_{L_0}^\epsilon + \int_{\epsilon}^0$ for sufficiently small $\epsilon$.
\par
\bigskip
The first part $\int_{L_0}^\epsilon$ obviously makes sense and continuous with respect to ${\bf x}_0$, since $f$ is smooth and the denominator is bounded away from $0$ on the compact set $\overline{L^{-1}(\epsilon, \infty)}\cap D_L$ for any fixed $\epsilon > 0$. 
\par
Consider the second integral $\int_{\epsilon}^0$. 
Let $B({\bf x}^\ast, \epsilon)$ be the closed ball centered at ${\bf x}^\ast$ with the radius $\epsilon$.
If ${\bf x}_0\in B_s\setminus (B({\bf x}^\ast, \epsilon)\cup W^s({\bf x}^\ast))$, the integral $\int_{\epsilon}^0\cdots$ also makes sense since the denominator is bounded away from $0$.
Note that, thanks to the Mean Value Theorem, $f({\bf x})$ is close to $Df({\bf x}^\ast)({\bf x}-{\bf x}^\ast)$ and the functional $f({\bf x})^T Y ({\bf x} - {\bf x}^\ast) + ({\bf x} - {\bf x}^\ast)^T Y f({\bf x})$ is sufficiently close to
\begin{equation*}
({\bf x}-{\bf x}^\ast)^T (Df({\bf x}^\ast)^T Y + YDf({\bf x}^\ast))) ({\bf x}-{\bf x}^\ast)
\end{equation*}
for ${\bf x}\in B({\bf x}^\ast, \epsilon)$ if $\epsilon > 0$ is sufficiently small.
It is therefore sufficient to prove that the integral
\begin{equation}
\label{integral-r}
\int_{\epsilon}^{0} \frac{Df({\bf x}^\ast)({\bf x}(L)-{\bf x}^\ast)}{ ({\bf x}-{\bf x}^\ast)^T (Df({\bf x}^\ast)^T Y + YDf({\bf x}^\ast))) ({\bf x}-{\bf x}^\ast) }dL
\end{equation}
is bounded for proving boundedness of the integral $\int_\epsilon^0\cdots$.
\par
Since ${\bf x}\in B({\bf x}^\ast,\epsilon)$, the vector ${\bf x}-{\bf x}^\ast$ is written by ${\bf x}-{\bf x}^\ast = r v$, where $r = r_{\bf x} = \|{\bf x}-{\bf x}^\ast\|\leq \epsilon$ and $v = v_{\bf x} = ({\bf x}-{\bf x}^\ast) / \|{\bf x}-{\bf x}^\ast\|$.
The Lyapunov function $L({\bf x}) = ({\bf x}-{\bf x}^\ast)^T Y({\bf x}-{\bf x}^\ast)$ can be thus rewritten by $L(r, v) = r^2 v^T Y v$ and
\begin{equation*}
\frac{dL}{dr}({\bf x}(r,v)) = 2r v^T Y v.
\end{equation*}
The integral (\ref{integral-r}) thus admits the transformation of variables from $L$ to $r$ as follows:
\begin{align*}
&\int_{\epsilon}^{0} \frac{Df({\bf x}^\ast)({\bf x}(L)-{\bf x}^\ast)}{ ({\bf x}-{\bf x}^\ast)^T (Df({\bf x}^\ast)^T Y + YDf({\bf x}^\ast))) ({\bf x}-{\bf x}^\ast) }dL\\
	&\quad = \int_{\epsilon_{x}'}^0 \frac{r Df({\bf x}^\ast)v}{ r^2 v^T (Df({\bf x}^\ast)^T Y + YDf({\bf x}^\ast))) v }\cdot 2r v^T Yv dr\\
	&\quad = 2\int_{\epsilon_{x}'}^0 \frac{ v^T Yv \cdot Df({\bf x}^\ast)v}{ v^T (Df({\bf x}^\ast)^T Y + YDf({\bf x}^\ast))) v } dr,
\end{align*}
which is independent of $r$, where $\epsilon_{x}' = \|{\bf x}-{\bf x}^\ast\|$ with $L({\bf x}) = \epsilon$.
The integrand is obviously bounded and continuous with respect to $x$, and hence the integral (\ref{integral-r}) is bounded.
The integral 
\begin{equation*}
 \int_{\epsilon}^{0} \frac{f({\bf x}(L))}{ f({\bf x}(L))^T Y ({\bf x}(L) - {\bf x}^\ast) + ({\bf x}(L)-{\bf x}^\ast)^T Y f({\bf x}(L))}dL
\end{equation*}
is also bounded, which follows from the continuity of the integral.
Note that the integral (\ref{integral-r}) makes sense even for $r = 0$, i.e., ${\bf x} = {\bf x}^\ast$.
Consequently, the definition of $R({\bf x}_0)$ makes sense everywhere in $B_s$ and $R$ is continuous.
\par
\bigskip
The mapping $R({\bf x})$ determines a family of solution curves $c:[0,1]\times B_s \to B_s$ given by
\begin{equation*}
c(s;{\bf x}) = {\bf x}(L_0) + \int_{L_0}^{(1-s)L_0} \frac{f({\bf x}(L))}{ f({\bf x}(L))^T Y ({\bf x}(L) - {\bf x}^\ast) + ({\bf x}(L)-{\bf x}^\ast)^T Y f({\bf x}(L))}dL.
\end{equation*}
The preceding arguments imply that $c$ is continuous. 
It immediately holds that $c(0;{\bf x}) = {\bf x}\equiv {\bf x}(L_0)$ for all ${\bf x}\in B_s$.
Moreover, $c(1;{\bf x}) = R({\bf x})\in b_s$ for all ${\bf x}\in B_s$. 
Finally, $c(s;{\bf x}) = {\bf x}$ holds for all ${\bf x}\in b_s$ and $s\in [0,1]$, since ${\bf x} = R({\bf x})$ for ${\bf x}\in b_s$.
These arguments imply that $b_s$ is the strong deformation retract of $B_s$ and the proof is completed.
\end{proof}
This theorem has two aspects. 
Firstly, Lyapunov functions of the form (\ref{Lyapunov-flow}) give us an analytic proof which Lyapunov domains possess strong deformation retracts. 
The exit set being the strong deformation retract is a well-known fact for {\em isolating blocks} in the Conley index theory (e.g. \cite{Con, Smo}).
On the contrary, $B_s$ is not an isolating block since its boundary contains an equilibrium ${\bf x}^\ast$.
The property of $B_s$ and $b_s$ thus gives us several generalizations of \lq\lq exits" of compact sets from viewpoints of topology and dynamical systems.
\par
Secondly, the proof of this theorem indicates that solution orbits in $D_L$ can be re-parameterized by values of $L$ instead of $t$, even for (un)stable manifolds. 
In fact, representations of solutions in terms of the $L$-integral guarantees the boundedness of integrals {\em in bounded range}.
This fact is very useful, in particular, if we track trajectories near equilibria or along invariant manifolds by integrals.
We say this re-parameterization the {\em Lyapunov tracing}.
An application of the Lyapunov tracing can be seen in \cite{TMSTMO2016}; namely, validations of blow-up times of blow-up solutions.

% New section
\section{Lyapunov functions for discrete dynamical systems}
\label{section-discrete}
In this section, we consider discrete dynamical systems generated by continuously differentiable maps of the form
\begin{equation}
\label{discrete-system}
\displaystyle {\bf x}_{n+1} = \psi({\bf x}_n),\quad {\bf x}_0 \in \mathbb{R}^n,
\end{equation}
where $\psi : \mathbb{R}^n\to \mathbb{R}^n$ be a smooth map.
Assume that ${\bf x^\ast}$ is a hyperbolic fixed point of $\psi$, namely, all eigenvalues of the Jacobian matrix $D\psi({\bf x})$ of $\psi({\bf x})$ at ${\bf x}^\ast$ are away from the unit circle. 
We now introduce a Lyapunov function in the similar manner to continuous dynamical systems.

\begin{definition}\rm
\label{dfn-Lyapunov-map}
Let $U\subset \mathbb{R}^n$ be an open subset. A {\em Lyapunov function $L:\mathbb{R}^n\to \mathbb{R}$ on $U$} for the map $\psi$ is a continuous function satisfying the following conditions.
\begin{enumerate}
\item $L(\psi({\bf x}))\leq L({\bf x})$ holds for all ${\bf x}\in U$.
\item $L(\psi({\bf x}))= L({\bf x})$ implies $\psi({\bf x})= {\bf x}^\ast\in U$, where ${\bf x}^\ast$ is a fixed point of (\ref{discrete-system}).
\end{enumerate}
\end{definition}
Our aim here is to construct a Lyapunov function for $\psi$ defined in an explicitly given neighborhood of ${\bf x}^\ast$.

% New subsection
\subsection{Construction of quadratic functions}
Here we show a procedure for the construction of the Lyapunov function defined in a neighborhood of a fixed point ${\bf x}^\ast$.

\begin{itemize}
\item[1.] Let $A^\ast = D\psi({\bf x}^\ast)$ and diagonalize it; namely, 
\begin{equation*}
\Lambda = X^{-1}A^\ast X,
\end{equation*}
where $\Lambda = {\rm diag}(\lambda_1,\lambda_2,\cdots,\lambda_m)$ is the eigenvalue matrix of $A^\ast$ and $X$ is the nonsingular matrix given by corresponding eigenvectors. 
Note that these calculations are sufficient to be done with floating-point arithmetics.
Assume that, in the sense of floating-point arithmetics, $|\lambda_i|\not = 1$ for all $i=1,\cdots, n$.
\item[2.] 
	Let $I^\ast$ be the diagonal matrix $I^\ast = {\rm diag}(i_1,i_2,\cdots,i_m)$, where
\begin{equation}
\label{sign-change-map}
i_k = \left\{ \begin{array}{c} 
                1, \quad \mbox{if} \quad |\lambda_k| < 1, \\
                -1, \quad \mbox{if} \quad |\lambda_k| > 1. 
                \end{array}\right.
\end{equation}
Note that $|\lambda_k|\not =1$ holds by hyperbolicity of $x^\ast$.
\item[3.] Calculate the real symmetric matrix $Y$ as follows:
\begin{equation*}
\hat{Y} = X^{-H} I^\ast X^{-1}, \quad Y = \mbox{Re}( \hat{Y}),
\end{equation*}
where $X^{-H}$ denotes the inverse matrix of the Hermitian transpose $X^H$ of $X$, which is sufficient to be calculated by floating-point arithmetics.
\item[4.] Define a quadratic function $L({\bf x})$ by
\begin{equation}
\label{Lyapunov-map}
L({\bf x}) = ({\bf x}-{\bf x}^\ast)^T Y ({\bf x}-{\bf x}^\ast),
\end{equation}
which is a candidate of Lyapunov functions around ${\bf x}^\ast$. 
If we deal with $L$ with interval arithmetics, we replace $Y$ by $(Y+Y^T)/2$ or set $Y_{ji}=Y_{ij}$ so that the symmetry of $Y$ is guaranteed.
\end{itemize}

% New subsection
\subsection{Validity of $L({\bf x})$}
Here we find a sufficient condition such that the function $L({\bf x})$ introduced above is indeed a Lyapunov function in a given  neighborhood of ${\bf x^\ast}$. 
%We also prove that such a sufficient condition is actually satisfied in a sufficiently small neighborhood of ${\bf x^\ast}$.

Let $\{{\bf x}_n\}$ be a $\psi$-orbit: ${\bf x}_{n+1} = \psi({\bf x}_n)$. By using the Jacobian matrix  $D\psi({\bf x})$ of $\psi$ at ${\bf x}$, we obtain the following integral representation of the difference ${\bf x}_{n+1} - {\bf x}^\ast$:  
\begin{align*}
          {\bf x}_{n+1} - {\bf x}^\ast &= 
          \displaystyle \int_0^1 
           D\psi({\bf x}^\ast + s({\bf x_n}-{\bf x}^\ast)) ds ({\bf x}_n - {\bf x}^\ast)\\
           &\equiv A_I({\bf x}_n)({\bf x}_n - {\bf x}^\ast),
\end{align*}
where we used the following expression:
\begin{equation*}
A_I({\bf x}) = \displaystyle \int_0^1 D\psi({\bf x}^\ast + s({\bf x}-{\bf x}^\ast)) ds.
\end{equation*}

This expression implies that the condition $L({\bf x}_{n+1}) < L({\bf x}_n)$ is equivalent to the following inequality:
\begin{equation*}
{\bf x}_n^T (\, A_I({\bf x}_n)^T Y A_I({\bf x}_n) - Y \,) {\bf x}_n < 0.
\end{equation*}
This observation indicates that, if we can validate 
\begin{equation*}
A_I({\bf x})^T Y A_I({\bf x}) - Y
\end{equation*}
is strictly negative definite for all ${\bf x}\in D_L$, 
where $D_L$ is a star-shaped domain centered at ${\bf x}^\ast$, 
then we know that $L(\psi({\bf x}))-L({\bf x})$ is strictly negative for all ${\bf x} \in D_L\setminus \{{\bf x}^\ast\}$.
By assumption, ${\bf x}^\ast\in D_L$ is a hyperbolic fixed point of $\psi$ and $L(\psi({\bf x}^\ast)) = L({\bf x}^\ast) = 0$ holds.
Summarizing the above arguments, we obtain the following theorem:

\begin{theorem}
\label{thm-Lyapunov-map}
Let $D_L$ be a compact, star-shaped domain containing a fixed point ${\bf x}^\ast$ of $\psi$. 
Define a matrix $B({\bf x})$ by
\begin{equation}
\label{matrix-discrete}
B({\bf x}) := A_I({\bf x})^T Y A_I({\bf x}) - Y
\end{equation}
Assume that the matrix $B({\bf x})$ is strictly negative definite for all ${\bf x}\in D_L$. 
Then $L({\bf x})$ defined by (\ref{Lyapunov-map}) is a Lyapunov function on $D_L$ in the sense of Definition \ref{dfn-Lyapunov-map}. 
Moreover, ${\bf x}^\ast$ is the unique fixed point of $\psi$ in $D_L$.
\end{theorem}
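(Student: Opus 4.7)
The proof is essentially a discrete analogue of Theorem~\ref{thm-Lyapunov-cont}, and the machinery has been set up in the paragraphs just preceding the statement. My plan is to take the integral representation
\[
{\bf x}_{n+1} - {\bf x}^\ast = A_I({\bf x}_n)({\bf x}_n - {\bf x}^\ast)
\]
at face value and feed it directly into $L$. The star-shapedness of $D_L$ at ${\bf x}^\ast$ is what makes this representation legitimate on $D_L$: for any ${\bf x}_n\in D_L$ the segment $\{{\bf x}^\ast+s({\bf x}_n-{\bf x}^\ast)\mid s\in[0,1]\}$ sits inside $D_L$, so the integrand in $A_I({\bf x}_n)$ is evaluated only where our hypothesis on $B$ applies.

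The central computation is the quadratic identity
\[
L(\psi({\bf x}_n)) - L({\bf x}_n) = ({\bf x}_n-{\bf x}^\ast)^T\bigl(A_I({\bf x}_n)^T Y A_I({\bf x}_n) - Y\bigr)({\bf x}_n-{\bf x}^\ast) = ({\bf x}_n-{\bf x}^\ast)^T B({\bf x}_n)({\bf x}_n-{\bf x}^\ast),
\]
obtained by expanding $L({\bf x}_{n+1})=({\bf x}_{n+1}-{\bf x}^\ast)^T Y({\bf x}_{n+1}-{\bf x}^\ast)$ and substituting the integral representation. Once this identity is in hand, the assumed strict negative definiteness of $B({\bf x})$ on $D_L$ gives $L(\psi({\bf x}_n)) - L({\bf x}_n) < 0$ for every ${\bf x}_n\in D_L\setminus\{{\bf x}^\ast\}$, while at ${\bf x}={\bf x}^\ast$ both sides vanish because $\psi({\bf x}^\ast)={\bf x}^\ast$. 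This immediately yields condition (1) of Definition~\ref{dfn-Lyapunov-map}. For condition (2), if $L(\psi({\bf x}))=L({\bf x})$ for some ${\bf x}\in D_L$, then the quadratic form $({\bf x}-{\bf x}^\ast)^T B({\bf x})({\bf x}-{\bf x}^\ast)$ vanishes; negative definiteness forces ${\bf x}={\bf x}^\ast$, and then $\psi({\bf x})={\bf x}^\ast$.

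The uniqueness of the fixed point in $D_L$ is a free bonus. If ${\bf y}^\ast\in D_L$ satisfied $\psi({\bf y}^\ast)={\bf y}^\ast$, then by the displayed identity $({\bf y}^\ast-{\bf x}^\ast)^T B({\bf y}^\ast)({\bf y}^\ast-{\bf x}^\ast)=0$, and strict negative definiteness of $B({\bf y}^\ast)$ again forces ${\bf y}^\ast={\bf x}^\ast$. No additional dynamical input beyond the quadratic identity is required.

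There is no genuine obstacle here: the only thing one must be careful about is making sure the integral $A_I({\bf x}_n)$ only samples the Jacobian over points in $D_L$, which is exactly why the domain is required to be star-shaped at ${\bf x}^\ast$. If that assumption were weakened (say, merely \emph{containing} ${\bf x}^\ast$), the integral representation would wander outside $D_L$ and the hypothesis on $B$ would not apply along the segment; so the star-shaped hypothesis is doing real work even though the proof itself is a short computation.
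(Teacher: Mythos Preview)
Your proposal is correct and follows essentially the same route as the paper: the argument preceding the theorem statement derives the integral representation ${\bf x}_{n+1}-{\bf x}^\ast = A_I({\bf x}_n)({\bf x}_n-{\bf x}^\ast)$, substitutes it into $L$ to obtain the quadratic form $({\bf x}_n-{\bf x}^\ast)^T B({\bf x}_n)({\bf x}_n-{\bf x}^\ast)$, and reads off both the Lyapunov property and uniqueness from strict negative definiteness of $B$. One small remark: your commentary on star-shapedness slightly overstates its role here, since the hypothesis is placed directly on $B({\bf x})$ for ${\bf x}\in D_L$ (not on the integrand along the segment), so the integral representation and the quadratic identity hold regardless; star-shapedness becomes essential only later, in the subdivision-based verification of Section~\ref{section-example-discrete}, where $B$ is decomposed via $\tilde B({\bf z},{\bf z}')$ along the segment.
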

\begin{definition}\rm
We shall call the domain $D_L$ in Theorem \ref{thm-Lyapunov-map} {\em a Lyapunov domain} (for $\psi$). 
\end{definition}

\begin{remark}[Lyapunov functions and cones for maps]\rm
As in the case of flows, the negative definiteness of $B({\bf x})$ in Theorem \ref{thm-Lyapunov-map} is equivalent to a sufficient condition of {\em cone conditions for maps} stated in \cite{ZCov}. 
Remark that {\em a cone} $Q$ (for $\psi$) is a quadratic form $Q({\bf z}) = Q_1({\bf z}) - Q_2({\bf z})$, where $Q_1$ and $Q_2$ are positive definite, and {\em the cone condition} for $\psi$ is 
\begin{equation}
\label{cone-cond-map}
Q(\psi({\bf z}_1) - \psi({\bf z}_2)) > Q({\bf z}_1- {\bf z}_2)\quad \text{ for all }{\bf z}_1, {\bf z}_2\in N \text{ with }\psi({\bf z}_1),\psi({\bf z}_2)\in N,
\end{equation}
where $N$ is an $h$-set (cf. Section \ref{section-aspect}). 
This fact can be compared with Section 3.3 in \cite{ZCov}.
Therefore, once we validate the negative definiteness of $B({\bf x})$ in $D_L$, we can study asymptotic behavior for $\psi$ in $D_L$ in terms of both Lyapunov functions $L({\bf x})$ and cones $Q({\bf z})$.
\end{remark}

% New subsection
\subsection{Verification of Lyapunov domains with interval arithmetics}

\subsubsection{Stage 1: Negative definiteness of $B({\bf x})$.}
\label{section-stage1-disc}
Theorem \ref{thm-Lyapunov-map} claims that $L({\bf x}) = ({\bf x}-{\bf x}^\ast)^T Y ({\bf x}-{\bf x}^\ast)$ is a Lyapunov function on a star-shaped domain $D_L$ centered at a fixed point ${\bf x}^\ast$, if the real symmetric matrix $B({\bf x})$ in (\ref{matrix-discrete}) is negative definite. 

Let $D_L$ be a star-shaped domain containing the fixed point $\{{\bf x}^\ast\}$.
Firstly we verify the strict negative definiteness of $B({\bf x})$ on $D_L$ directly.
In practical computations, we enclose $D_L$ by an interval vector $[D_L]$ and verify the negative definiteness of the interval matrix $B([D_L])$.
If it is succeeded, everything has done in this case.
\par
If not, we verify the negative definiteness of $B({\bf x})$ on $D_L$ after decomposing $D_L$ into subdomains: $D_L= \bigcup_{k=1}^{K} D_k$.
As verifications in the whole domain, we enclose each subdomain $D_k$ by an interval vector $[D_k]$ and try to verify the negative definiteness of $B([D_k])$ for all $k = 1,\cdots, K$.
The basic idea follows the way in the case of flows, but the detail is more complicated.
We revisit the point in Section \ref{section-example-discrete}.

\subsubsection{Stage 2: The case where $D_k$ does not contain ${\bf x}^\ast$.}
If there is a subdomain $D_k$ which does not contain ${\bf x}^\ast$, we directly verify if $L(\psi({\bf x})) < L({\bf x})$ holds on $D_k$ by interval arithmetics.
In particular, we verify
\begin{equation*}
L(\psi([{\bf x}_k])) - L([{\bf x}_k]) < 0
\end{equation*}
for interval vectors $[{\bf x}_k]$ containing $D_k$ with $[{\bf x}_k]\subset D_L$, which imply that the functional $L({\bf x}) = ({\bf x}-{\bf x}^\ast)^T Y ({\bf x}-{\bf x}^\ast)$ is a Lyapunov function on $D_k$.
In Stage 2, there is the case that overestimates of intervals with interval arithmetics cause the failure of validations, which is discussed in Section \ref{section-example-discrete}.

\subsection{${\bf m}$-Lyapunov functions}
As in the case of continuous systems, we can discuss the arbitrary choice of quadratic functions.
In the construction of $L({\bf x})$, we replace the diagonal matrix $I^\ast$ by $M^\ast = {\rm diag}(i_1,i_2,\cdots,i_n)$ in the definition of $Y$, where
\begin{equation*}
i_j = \left\{ \begin{array}{c} 
                m_j, \quad \mbox{if} \quad |\lambda_j| < 1, \\
                -m_j, \quad \mbox{if} \quad |\lambda_j| > 1
                \end{array}\right.
\end{equation*}
and ${\bf m} = \{m_j\}_{j=1}^n$ is a sequence of given positive numbers.
As in the case of continuous systems, define the quadratic function $L_{M^\ast}$ as
\begin{equation}
\label{Lyapunov-M-map}
L_{M^\ast}({\bf x}) := ({\bf x} - {\bf x}^\ast)^T Y_{M^\ast}({\bf x} - {\bf x}^\ast),\qquad Y_{M^\ast} = {\rm Re}(X^{-H}M^\ast X^{-1}), 
\end{equation}
which is nothing but $L$ replacing $I^\ast$ by $M^\ast$. 
We can then prove negative definiteness of the matrix $B^\ast$ corresponding to the differential $(dL_{M^\ast}/dt)({\bf x})$ at ${\bf x}={\bf x}^\ast$ by similar arguments as Proposition \ref{prop-hyp-map}.
Consequently, we obtain the following corollary.

\begin{corollary}
\label{cor-m-Lyap-map}
Let $D_L$ be a compact, star-shaped domain containing a fixed point ${\bf x}^\ast$ of $\psi$. 
Define a matrix $B_{M^\ast}({\bf x})$ by $B({\bf x})$ in (\ref{matrix-discrete}) replacing $I^\ast$ by $M^\ast$.
Assume that the matrix $B_{M^\ast}({\bf x})$ is strictly negative definite for all ${\bf z}\in D_L$. 
Then $L_{M^\ast}({\bf x})$ is a Lyapunov function on $D_L$ in the sense of Definition \ref{dfn-Lyapunov-map}. 
Moreover, ${\bf x}^\ast$ is the unique fixed point in $D_L$.
\end{corollary}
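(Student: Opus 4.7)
The plan is to mirror the proof of Theorem \ref{thm-Lyapunov-map} line by line, since the only structural change is that the symmetric matrix $Y$ has been replaced by $Y_{M^\ast} = {\rm Re}(X^{-H}M^\ast X^{-1})$, and the diagonal entries of $I^\ast$ by the signed weights $\pm m_j$ in $M^\ast$. The form $L_{M^\ast}({\bf x}) = ({\bf x}-{\bf x}^\ast)^T Y_{M^\ast}({\bf x}-{\bf x}^\ast)$ is again a real quadratic form (the operation $Y_{M^\ast} \mapsto (Y_{M^\ast}+Y_{M^\ast}^T)/2$ keeps the symmetry even under interval arithmetics), so everything in the proof of Theorem \ref{thm-Lyapunov-map} that uses only the symmetry and real-valuedness of $Y$ will carry over verbatim.

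First I would introduce the integral operator $A_I({\bf x}) = \int_0^1 D\psi({\bf x}^\ast + s({\bf x}-{\bf x}^\ast))\,ds$ exactly as before, so that $\psi({\bf x}) - {\bf x}^\ast = A_I({\bf x})({\bf x}-{\bf x}^\ast)$ by the fundamental theorem of calculus and $\psi({\bf x}^\ast) = {\bf x}^\ast$. Substituting this into $L_{M^\ast}(\psi({\bf x})) - L_{M^\ast}({\bf x})$ yields the quadratic form
\begin{equation*}
L_{M^\ast}(\psi({\bf x})) - L_{M^\ast}({\bf x}) = ({\bf x}-{\bf x}^\ast)^T \bigl(A_I({\bf x})^T Y_{M^\ast} A_I({\bf x}) - Y_{M^\ast}\bigr)({\bf x}-{\bf x}^\ast) = ({\bf x}-{\bf x}^\ast)^T B_{M^\ast}({\bf x})({\bf x}-{\bf x}^\ast),
\end{equation*}
which is precisely the evaluation of the replaced matrix $B_{M^\ast}({\bf x})$ against the radial direction from ${\bf x}^\ast$ to ${\bf x}$.

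Next, I would invoke the star-shaped hypothesis: for any ${\bf x}\in D_L$, the segment $\{{\bf x}^\ast + s({\bf x}-{\bf x}^\ast) \mid s\in[0,1]\}$ lies in $D_L$, so the integrand defining $A_I({\bf x})$ is evaluated only at points where the hypothesis applies. Combined with the assumed strict negative definiteness of $B_{M^\ast}({\bf z})$ for every ${\bf z}\in D_L$, this gives $L_{M^\ast}(\psi({\bf x})) - L_{M^\ast}({\bf x}) < 0$ for every ${\bf x}\in D_L\setminus\{{\bf x}^\ast\}$ and equality at ${\bf x}={\bf x}^\ast$; this matches the two conditions of Definition \ref{dfn-Lyapunov-map}, proving $L_{M^\ast}$ is a Lyapunov function on $D_L$. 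Uniqueness of the fixed point in $D_L$ follows immediately: any other fixed point ${\bf x}'\in D_L$ would satisfy $\psi({\bf x}') = {\bf x}'$, hence $L_{M^\ast}(\psi({\bf x}')) - L_{M^\ast}({\bf x}') = 0$, contradicting the strict inequality at ${\bf x}'\neq {\bf x}^\ast$.

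I do not expect any genuine obstacle; the only subtlety to verify is that $Y_{M^\ast}$ is still real and symmetric so that the quadratic form $({\bf x}-{\bf x}^\ast)^T Y_{M^\ast}({\bf x}-{\bf x}^\ast)$ agrees with the Hermitian form against $X^{-H}M^\ast X^{-1}$ when applied to real vectors. This follows because $M^\ast$ is real and diagonal, so $X^{-H}M^\ast X^{-1}$ is Hermitian, and taking real part preserves the value of the associated quadratic form on real vectors — exactly as in the derivation of $Y$ from $\hat Y$. No reference to Proposition \ref{prop-hyp-map} or to the spectral properties of $D\psi({\bf x}^\ast)$ is actually needed in the statement, since hyperbolicity is encoded implicitly in the assumed negative definiteness of $B_{M^\ast}({\bf z})$ on $D_L$.
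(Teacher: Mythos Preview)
Your proposal is correct and follows essentially the same approach as the paper: the corollary is obtained by repeating the argument preceding Theorem~\ref{thm-Lyapunov-map} verbatim with $Y$ replaced by $Y_{M^\ast}$, and the paper offers no further details beyond ``Consequently, we obtain the following corollary.'' Your remark that Proposition~\ref{prop-hyp-map} is not needed is also accurate --- the paper's reference to it just before the corollary is only meant to indicate that the hypothesis is non-vacuous near a hyperbolic fixed point, not to supply part of the proof.
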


We shall call the function $L_{M^\ast}$ being a Lyapunov function {\em an {${\bf m}$}-Lyapunov function} (for $\psi$).
Note that the geometry of $D_L$ depends on the choice of positive numbers ${\bf m}$, which is discussed in Section \ref{section-example-discrete}.

\subsection{Lyapunov functions around hyperbolic fixed points}

In Theorem \ref{thm-Lyapunov-map}, we focused on the strict negative definiteness of $B({\bf x})$ in (\ref{matrix-discrete}) in a given domain containing ${\bf x}^\ast$.
The negative definiteness of $B({\bf x})$ reflects the hyprbolicity of ${\bf x}^\ast$. 
Indeed, for a {\em rigorous} hyperbolic fixed point ${\bf x}^\ast$, the matrix $B({\bf x}^\ast)$ is strictly negative definite.
This fact yields the following proposition; namely, {\em all hyperbolic fixed points locally admit Lyapunov functions.}

\begin{proposition}
\label{prop-hyp-map}
Assume that ${\bf x}^\ast$ is a hyperbolic fixed point of $\psi$. Then, for all points in a sufficiently small neighborhood of ${\bf x}^\ast$, the matrix $B({\bf x})$ in (\ref{matrix-discrete}) is strictly negative definite.
In particular, the functional $L({\bf x}) = ({\bf x}-{\bf x}^\ast)^T Y ({\bf x}-{\bf x}^\ast)$ is a Lyapunov function in such a neighborhood.
\end{proposition}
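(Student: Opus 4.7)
The plan is to mirror the proof of Proposition \ref{prop-hyp-cont}: reduce the strict negative definiteness of the \emph{real symmetric} matrix $B({\bf x}^\ast)$ to the strict negative definiteness of its Hermitian counterpart, compute the latter explicitly in the eigenbasis of $D\psi({\bf x}^\ast)$, and then extend to a neighborhood by continuity.

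First I would evaluate the integral $A_I$ at ${\bf x}={\bf x}^\ast$. Since the segment $\{{\bf x}^\ast + s({\bf x}^\ast-{\bf x}^\ast)\}$ collapses to the single point ${\bf x}^\ast$, we immediately obtain $A_I({\bf x}^\ast) = D\psi({\bf x}^\ast) \equiv A^\ast$. Hence
\begin{equation*}
B({\bf x}^\ast) = (A^\ast)^T Y A^\ast - Y.
\end{equation*}
Using the same reduction as in the continuous case, the quadratic form ${\bf z}^T B({\bf x}^\ast){\bf z}$ on real vectors ${\bf z}$ coincides with the quadratic form of the Hermitian matrix
\begin{equation*}
\hat B^\ast := (A^\ast)^H \hat Y A^\ast - \hat Y,
\end{equation*}
because $\hat Y$ is Hermitian and ${\bf z}^T \hat Y {\bf z} = {\bf z}^T \mathrm{Re}(\hat Y) {\bf z}$. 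Thus negative definiteness of $B({\bf x}^\ast)$ follows from negative definiteness of $\hat B^\ast$.

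Next I would substitute $A^\ast = X\Lambda X^{-1}$ and $\hat Y = X^{-H}I^\ast X^{-1}$. A direct telescoping calculation gives
\begin{equation*}
\hat B^\ast = X^{-H}\Lambda^H I^\ast \Lambda X^{-1} - X^{-H} I^\ast X^{-1} = X^{-H}\bigl(\Lambda^H I^\ast \Lambda - I^\ast\bigr)X^{-1}.
\end{equation*}
Since $\Lambda$ is diagonal, $\Lambda^H I^\ast \Lambda - I^\ast$ is the diagonal matrix with entries $i_k(|\lambda_k|^2-1)$. By the sign rule (\ref{sign-change-map}) and the assumption $|\lambda_k|\neq 1$, each of these entries is strictly negative: if $|\lambda_k|<1$ then $i_k=1$ and $|\lambda_k|^2-1<0$, while if $|\lambda_k|>1$ then $i_k=-1$ and $|\lambda_k|^2-1>0$. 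Therefore $\Lambda^H I^\ast \Lambda - I^\ast$ is a negative definite diagonal matrix, and its conjugation by the nonsingular matrix $X^{-1}$ preserves strict negative definiteness of $\hat B^\ast$.

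Finally I would invoke continuity. The map ${\bf x}\mapsto B({\bf x}) = A_I({\bf x})^T Y A_I({\bf x}) - Y$ is continuous since $D\psi$ is continuous and the integral depends continuously on its endpoint. Strict negative definiteness is an open condition on the space of symmetric matrices (e.g., via continuity of eigenvalues or of the smallest eigenvalue), so there exists an open neighborhood $U$ of ${\bf x}^\ast$ on which $B({\bf x})$ remains strictly negative definite. Shrinking $U$ if necessary so that it is compact and star-shaped about ${\bf x}^\ast$, Theorem \ref{thm-Lyapunov-map} then yields that $L$ is a Lyapunov function on $U$. The only subtle step is the Hermitian-to-real reduction at ${\bf x}^\ast$, but it is identical to the one already carried out in Proposition \ref{prop-hyp-cont}, so no new difficulty arises.
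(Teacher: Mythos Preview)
Your proof is correct and follows essentially the same route as the paper: you reduce $B({\bf x}^\ast)$ to its Hermitian counterpart $(A^\ast)^H\hat Y A^\ast-\hat Y$, diagonalize via $A^\ast=X\Lambda X^{-1}$ and $\hat Y=X^{-H}I^\ast X^{-1}$ to obtain $X^{-H}(\Lambda^H I^\ast\Lambda-I^\ast)X^{-1}$, check the diagonal entries $i_k(|\lambda_k|^2-1)<0$ from the sign rule (\ref{sign-change-map}), and then pass to a neighborhood by continuity. The paper writes the diagonal factor equivalently as $(|\Lambda|^2-I)I^\ast$ and is slightly terser about the continuity step, but the argument is the same.
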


\begin{proof}
For proving the negative definiteness of $B({\bf x})$, it is sufficient to prove that $(A^\ast)^T Y A^\ast - Y$ is negative definite, where $A^\ast = A_I({\bf x}^\ast) = D\psi({\bf x}^\ast)$, since eigenvalues depend continuously on ${\bf z}$.
Note that, for quadratic forms given by real vectors, the negative definiteness of $(A^\ast)^T Y A^\ast - Y$ is equivalent to that of $(A^\ast)^H \hat{Y} A^\ast - \hat{Y}$. 
In what follows we prove the negative definiteness of $(A^\ast)^H \hat{Y} A^\ast - \hat{Y}$.

By the definition of the Hermitian matrix $\hat{Y}$, we get
\begin{align*}
(A^\ast)^H \hat{Y} A^\ast - \hat{Y} &= 
    X^{-H} (\Lambda^H I^\ast \Lambda - I^\ast) X^{-1} \\
    &= X^{-H} (\Lambda^H \Lambda - I) I^\ast X^{-1} \\
    &= X^{-H}( |\Lambda|^2 - I ) I^\ast X^{-1},
\end{align*}
where $|\Lambda|$ denotes the matrix whose entries are absolute values of the corresponding entries of $\Lambda$.
By the definition of the matrix $I^\ast$, one easily have that $(A^\ast)^H \hat{Y} A^\ast - \hat{Y}$ is a negative definite Hermitian matrix, which implies that $L({\bf x}) = ({\bf x}-{\bf x}^\ast)^T Y ({\bf x}-{\bf x}^\ast)$ is a Lyapunov function in a sufficiently small neighborhood of ${\bf x}^\ast$. 
\end{proof}

The same arguments as Proposition \ref{prop-Y-cont} with Stable Manifold Theorem for maps (e.g. \cite{Rob}) yield the eigenstructure of $Y$ in (\ref{Lyapunov-map}).

\begin{proposition}
Consider a functional $L({\bf x}) = {\bf x}^TY{\bf x}$ for some real symmetric matrix $Y$.
Assume that the origin ${\bf 0}$ is a hyperbolic fixed point of $\psi$ such that $D\psi({\bf 0})$ has $u$ eigenvalues with moduli larger than $1$ and $s = n-u$ eigenvalues with moduli less than $1$.
Assume that, for a compact star-shaped domain $D_L$ with ${\bf 0}\in {\rm int}D_L$, the following inequality holds:
\begin{equation*}
L(\psi({\bf x})) < L({\bf x}),\quad  \forall {\bf x}\in D_L\setminus \{{\bf 0}\}.
\end{equation*}
Then $Y$ is non-singular and has $u$ negative eigenvalues and $s = n-u$ positive eigenvalues.
\end{proposition}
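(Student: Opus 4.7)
My plan is to follow the structure of the proof of Proposition~\ref{prop-Y-cont}, replacing the continuous-time differential $\frac{dL}{dt}(\varphi(t,\cdot))|_{t=0}$ by the discrete increment $L(\psi(\cdot))-L(\cdot)$, and invoking the Stable Manifold Theorem for maps (cf.~\cite{Rob}) wherever its flow analog was used. Hyperbolicity of ${\bf 0}$ guarantees that $A^\ast := D\psi({\bf 0})$ is invertible, so $\psi$ is a local diffeomorphism near ${\bf 0}$ and admits a local stable manifold $W^s({\bf 0})$ of dimension $s$ and a local unstable manifold $W^u({\bf 0})$ of dimension $u$, tangent at ${\bf 0}$ to the $A^\ast$-invariant subspaces $E^s,E^u$.

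The first step is the dichotomy. For ${\bf x}\in W^s({\bf 0})\setminus\{{\bf 0}\}$ sufficiently close to ${\bf 0}$, the forward iterates $\psi^n({\bf x})$ remain in $D_L$ and tend to ${\bf 0}$, so the strictly decreasing sequence $\{L(\psi^n({\bf x}))\}$ converges to $L({\bf 0})=0$, forcing $L({\bf x})>0$. Applying the same reasoning to $\psi^{-1}$ on $W^u({\bf 0})$ gives $L<0$ on $W^u({\bf 0})\setminus\{{\bf 0}\}$ near the origin. Granting non-singularity of $Y$ (handled below), the counting step is then identical to that of Proposition~\ref{prop-Y-cont}: if $u'<u$ denotes the number of negative eigenvalues of $Y$, the intersection of the $s'$-dimensional positive eigenspace of $Y$ with $E^u$ is nontrivial since $s'+u>n$, producing a non-zero $\mathbf{u}^\ast$ with $L(\mathbf{u}^\ast)>0$; the Stable Manifold Theorem provides a point $\mathbf{z}^\ast=\delta\mathbf{u}^\ast+\Psi^u(\delta\mathbf{u}^\ast)\in W^u({\bf 0})$ satisfying $L(\mathbf{z}^\ast)=\delta^2L(\mathbf{u}^\ast)+O(\delta^3)>0$ for small $\delta$, contradicting $L<0$ on $W^u({\bf 0})\setminus\{{\bf 0}\}$. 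The symmetric argument rules out $s'<s$.

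The main obstacle is the non-singularity of $Y$. In the flow case, $Y{\bf x}={\bf 0}$ immediately forces both summands of $f({\bf x})^TY{\bf x}+{\bf x}^TYf({\bf x})$ to vanish, contradicting the strict hypothesis; in the discrete setting, $Y{\bf x}={\bf 0}$ only collapses the increment to $L(\psi({\bf x}))-L({\bf x})=L(\psi({\bf x}))$, which is merely constrained to be negative and so is not directly contradictory. To bypass this, I would first expand the increment in Taylor form, ${\bf x}^T[(A^\ast)^TYA^\ast-Y]{\bf x}+O(\|{\bf x}\|^3)$, and deduce from the scaling ${\bf x}\mapsto\epsilon{\bf x}$ that $(A^\ast)^TYA^\ast-Y$ is negative semi-definite. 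In coordinates adapted to the splitting $\mathbb{R}^n=E^s\oplus E^u$, a discrete Lyapunov--Stein iteration exploiting the Schur stability of $\Lambda^s=A^\ast|_{E^s}$ and of $(\Lambda^u)^{-1}$ forces the diagonal blocks $Y^{ss}$ and $-Y^{uu}$ of $Y$ to be positive semi-definite. The delicate final step is to promote these to strict positive/negative definiteness by bringing in the nonlinear graph corrections $\Psi^s,\Psi^u$ from the Stable Manifold Theorem: a putative null direction $v_0\in E^s$ of $Y^{ss}$ with $L(v_0)=0$ would give points $\pm\epsilon v_0+\Psi^s(\pm\epsilon v_0)\in W^s({\bf 0})$ and comparing the sign-sensitive expansions of $L$ at the leading orders $\epsilon^3$ and $\epsilon^4$ against the strict positivity of $L$ on $W^s({\bf 0})\setminus\{{\bf 0}\}$ would yield the required contradiction. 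This would give $p\ge s$ strictly positive and $q\ge u$ strictly negative eigenvalues of $Y$, forcing $p+q=n$ and signature $(s,u,0)$, thereby establishing simultaneously non-singularity and the correct count.
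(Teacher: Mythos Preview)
Your plan follows what the paper intends: it states only that ``the same arguments as Proposition~\ref{prop-Y-cont} with the Stable Manifold Theorem for maps'' suffice, and you are right to flag that the non-singularity step does not transfer verbatim, since $Y\mathbf{x}=\mathbf{0}$ reduces the increment to $L(\psi(\mathbf{x}))$ rather than to $0$. Your dichotomy and your Stein-type iteration (yielding $Y^{ss}\ge 0$, $Y^{uu}\le 0$ from $(A^\ast)^TYA^\ast-Y\le 0$) are correct and useful.

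The gap is in your ``delicate final step'': promoting $Y^{ss}\ge 0$ to $Y^{ss}>0$ via $\epsilon^3,\epsilon^4$ expansions along $W^s(\mathbf{0})$ cannot work, because $Y^{ss}$ need not be strictly definite. Take $n=2$, $\psi(x,y)=(x/2,\,2y-x^3)$, and $Y=\left(\begin{smallmatrix}0&1\\1&-1\end{smallmatrix}\right)$. Then $L(\psi(x,y))-L(x,y)=-3y^2+4x^3y-x^4-x^6$, whose discriminant in $y$ is $4x^4(x^2-3)<0$ for $0<|x|<\sqrt3$, so the strict hypothesis holds on a neighborhood of $\mathbf{0}$; yet $Y^{ss}=0$. (The proposition's conclusion is still satisfied here: $Y$ is nonsingular with eigenvalues $(-1\pm\sqrt5)/2$.) Thus strict definiteness of the diagonal blocks is the wrong intermediate target.

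The fix is to feed the \emph{semi}-definiteness directly into the counting argument, dropping the separate non-singularity step. If $Y$ had fewer than $u$ negative eigenvalues, a dimension count gives $\mathbf{u}^\ast\in E^u\setminus\{\mathbf{0}\}$ lying in the non-negative eigenspace of $Y$, so $L(\mathbf{u}^\ast)\ge 0$. If $L(\mathbf{u}^\ast)>0$, your graph argument on $W^u(\mathbf{0})$ already gives the contradiction. If $L(\mathbf{u}^\ast)=0$, then $\mathbf{u}^\ast\in\ker Y$, and for $\mathbf{z}^\ast=\epsilon\mathbf{u}^\ast+\Psi^u(\epsilon\mathbf{u}^\ast)\in W^u(\mathbf{0})$ the terms involving $Y\mathbf{u}^\ast$ vanish, leaving $L(\mathbf{z}^\ast)=\Psi^u(\epsilon\mathbf{u}^\ast)^T Y\,\Psi^u(\epsilon\mathbf{u}^\ast)\ge 0$ because $\Psi^u(\epsilon\mathbf{u}^\ast)\in E^s$ and $Y^{ss}\ge 0$; this contradicts $L<0$ on $W^u(\mathbf{0})\setminus\{\mathbf{0}\}$. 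Hence $q\ge u$, symmetrically $p\ge s$, and $p+q=n$ forces non-singularity together with the correct signature.
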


% New section
\section{Towards applications to periodic orbits}
\label{section-Lyapunov-periodic}
For applications of Lyapunov functions for discrete dynamical systems, a typical example is to periodic orbits as fixed points of Poincar\'{e} maps. 
As preliminaries of applications in this direction, we review the verification method of periodic orbits in continuous dynamical systems in this section. 
During verifications, implementations of Poincar\'{e} maps and their differentials arise, which are necessary to validate Lyapunov functions.

% New subsection
\subsection{Poincar\'{e} maps and their differentials}
Here we review the construction of the Poincar\'{e} map and its differential.

%Assume that the dynamical system generated by (\ref{autonomous}) possesses the periodic orbit and that it is specified as a fixed point of the Poincar\'{e} map defined on a hyperplane $\Gamma$ with the unit normal vector ${\bf n}_\Gamma$.

Let $\varphi$ be the flow generated by (\ref{autonomous}).
The Poincar\'{e} map $P({\bf x})$ for the point ${\bf x}$ on a Poincar\'{e} section is then constructed as follows.
Firstly, we put $\tilde {\bf x}_0$ as a point on an approximate periodic orbit.
Secondly, let ${\bf n}$ be a unit vector which is approximately parallel to $f(\tilde {\bf x}_0)$. 
Let $\Gamma$ be the hyperplane such that ${\bf n}$ is the unit normal vector to $\Gamma$, which is a candidate of our Poincar\'{e} section.
Thirdly, 
\begin{enumerate}
\item Compute the solution orbit $\varphi(t,{\bf x}_0)$ with the initial point ${\bf x}_0\in \Gamma$;
\item Compute times $\underline{t} < \overline{t}$ satisfying $\{{\bf n}^T\cdot (\varphi(\underline{t}, {\bf x}_0)-\tilde {\bf x}_0)\} \{{\bf n}^T\cdot (\varphi(\overline{t}, {\bf x}_0)-\tilde {\bf x}_0)\} < 0$;
\item Letting $[t] := [\underline{t}, \overline{t}]$, compute an enclosure $[{\bf x}] := \varphi([t], {\bf x}_0)$; and
\item Verify $0\not \in {\bf n}^T\cdot f([{\bf x}])$.
\end{enumerate}
The interval enclosure $[{\bf x}]$ contains the image of the Poincar\'{e} map $P$ of ${\bf x}_0$.
Conditions 2 and 4 imply the unique existence of the time $T_r({\bf x}_0)>0$ within $[\underline{t}, \overline{t}]$ such that $\varphi(T_r({\bf x}_0),{\bf x}_0) \in \Gamma$ holds. 
Since the vector field $f$ is continuous, $P$ is locally diffeomorphic, which implies that $[{\bf x}]$ is indeed the rigorous enclosure of the Poincar\'{e} map $P({\bf x}_0)$.

The Jacobian matrix of the Poincar\'{e} map $P$ at ${\bf x}_0$ is derived as follows.
For a trajectory $\{\varphi(t,{\bf x})\}_{t\geq 0}$, let
\begin{equation*}
V(t;{\bf x}) = \displaystyle \frac{\partial}{\partial {\bf x}}\varphi(t,{\bf x}),
\end{equation*}
which is the $n$-squared matrix satisfying the following variational equation around $\{\varphi(t,{\bf x})\}_{t\geq 0}$ for fixed ${\bf x}$:
\begin{align*}
\displaystyle \frac{dV}{dt} &= Df(\varphi(t,{\bf x}))V,\\
V(0; {\bf x}) &= I_n,
\end{align*}
where $I_n$ is the $n$-dimensional identity matrix.
%By the similar argument to the above, the enclosure of the solution matrix is given by $[V] := V([t],{\bf x})$, where $[t] = [\underline{t}, \overline{t}]$.

The Jacobian matrix of the Poincar\'{e} map $P({\bf x}_0)$ at ${\bf x}_0$ is then calculated by
\begin{equation*}
DP({\bf x}_0) = \displaystyle \frac{d}{d{\bf x}'}\varphi(T_r({\bf x}_0),{\bf x}_0) = \left( I_n - \displaystyle \frac{ f(\varphi(T_r({\bf x}_0),{\bf x}_0)){\bf n}^T} {{\bf n}^T \cdot f( \varphi(T_r({\bf x}_0),{\bf x}_0) )} \right) V(T_r({\bf x}_0); \varphi(T_r({\bf x}_0),{\bf x}_0)),
\end{equation*}
where ${\bf x}'$ denotes a coordinate on $\Gamma$. 
Note that $0\not \in {\bf n}^T\cdot f([{\bf x}])$ holds by Condition 4, which guarantees the well-definedness of the differential $DP$ at ${\bf x}_0$.

%\bigskip
%In what follows we assume that there is a periodic orbits $\gamma$ as well as associated Poincar\'{e} map, and that the existence domain is specified as an interval domain on a hyperplane which corresponds to the Poincar\'{e} section.
%In this case, Lyapunov functions for discrete dynamical systems defined by Poincar\'{e} maps are considered.

% New subsection
\subsection{Remark on verification of the existence of periodic orbits}
\label{section-valid-per}

%\bigskip
%Consider the iteration
%\begin{equation*}
%{\bf x}_{n+1} = P({\bf x}_n), \quad P({\bf x}) = \varphi(T_r({\bf x}),{\bf x}), 
%\end{equation*}
%which gives us the Poincar\'{e} map $P : \Gamma\to \Gamma$.

We have prepared the (rigorous enclosures of) Poincar\'{e} map $P$ and its differential $DP$ in the previous subsection.
Before moving to validations of Lyapunov domains of $P$, we review a method for computing the enclosure of fixed points of $P$; namely, periodic orbits.
\par
There are mainly two approaches to verify the existence of periodic orbits with interval arithmetics:
\begin{itemize}
\item[1.] Construct Poincar\'{e} maps on a section and verify their fixed point, e.g., \cite{ZLoh};
\item[2.] Regard periodic orbits as solutions of the two-point boundary value problem
\begin{equation}
\label{BVP}
\begin{cases}
\displaystyle \frac{d}{d\tilde t}{\bf x}(\tilde t) = Tf({\bf x}(\tilde t)) & \\
{\bf x}(1) = {\bf x}(0) &
\end{cases}
\end{equation}
and verify solutions with bordering conditions.
\end{itemize}
Here we briefly review the second approach proposed by the second and the third authors (e.g. \cite{HY}).
Consider the autonomous differential equation (\ref{autonomous}).
Let $\Gamma$ be a hyperplane and ${\bf n}_\Gamma$ be the unit normal vector to $\Gamma$.
Our aim here is to validate a periodic orbit through a point ${\bf w}^\ast \in \Gamma$ as well as its period $T^\ast$.

Our strategy is the reduction to the boundary value problem (\ref{BVP}) with {\em the bordering condition}:
\begin{equation}
\label{bvp-border}
\begin{cases}
{\bf n}_\Gamma \cdot ({\bf w}-\tilde {\bf w})= 0, & \\
%\varphi(T,{\bf w}) = {\bf w}, & 
\varphi(T,{\bf w}) = {\bf w}, & 
\end{cases}\quad T>0, {\bf w}\in \Gamma,
\end{equation}
where $\tilde {\bf w} \in \Gamma$ be a point on, say, an approximate periodic orbit.
The first equation in (\ref{bvp-border}) is called the bordering condition and remove the ambiguity of detection of points on periodic orbits caused by translation invariance.

Define a map $K : \mathbb{R}^{1+n}\to \mathbb{R}^{1+n}$ by
\begin{equation*}
K({\bf z}) := 
\begin{pmatrix}
{\bf n}_\Gamma \cdot ({\bf w}-\tilde {\bf w}) \\
\varphi(T,{\bf w}) - {\bf w}
%P({\bf w}) - {\bf w}
\end{pmatrix},\quad {\bf z} = (T, {\bf w})^T.
\end{equation*}
The pair $(T^\ast, {\bf w}^\ast)$ of a periodic point on $\Gamma$ and its period corresponds to the zero of $K$.
The Jacobi matrix $DK$ of $K$ at ${\bf z} = (T, {\bf w})^T$ is
\begin{equation*}
DK({\bf z}) = 
\begin{pmatrix}
{\bf 0} & {\bf n}_\Gamma^T \\
f(\varphi(T,{\bf w})) & V(T;{\bf w}) - I_n
\end{pmatrix},
\end{equation*}
where $V$ is the variation matrix associated with $\varphi$.

In the next step, we construct a Newton-like operator $N$ using $K$ and $DK$, and apply the quasi-Newton method to validate a zero of $K$.
Let $DK_a$ be a nonsingular matrix which is an approximation to $DK(\tilde {\bf z})$ at the pair $\tilde {\bf z} = (\tilde T, \tilde {\bf w})$ of an approximate period $\tilde T$ and an approximate periodic point $\tilde {\bf w}$ on $\Gamma$.
Then define the map $N :\mathbb{R}^{1+n}\to \mathbb{R}^{1+n}$ as
\begin{equation*}
N({\bf z}) := {\bf z} - DK_a^{-1}\cdot K({\bf z}).
\end{equation*}

Finally, setting initially a small interval set $[Z_0] = ([T], [W])^T$ containing $\tilde {\bf z}$ with an interval vector $[W]\subset \Gamma$, %containing a pair of approximate periodic point and period, 
apply the following algorithm:
\begin{algorithm}
\label{alg-fixpt}
Initially set $k=0$ and $\epsilon > 0$ small in advance.
\begin{enumerate}
\item Check if $N([Z_k])\subset [Z_k] = ([T_k], [W_k])$. If this operation passes, return \lq\lq succeeded" and stop the algorithm.
\item If Step 1 fails, reset $[Z_{k+1}] = ([T_{k+1}], [W_{k+1}]) := (1+\epsilon)N([Z_k]) - \epsilon N([Z_k])$ and go back to Step 1 replacing $k$ by $k+1$.
\end{enumerate}
\end{algorithm}
If this algorithm returns \lq\lq succeeded" at $k=k_0$, then there is a fixed point $(T^\ast, {\bf w}^\ast)$ of $N$ in $[Z_{k_0}]$, which is actually  a zero of $K$. 
It implies that ${\bf w}^\ast$ is the intersection point of $\Gamma$ and a periodic orbit with the period $T^\ast$.

\begin{remark}\rm
For computations with $N$, we can use the following Krawczyk-type operator $\tilde N$ instead of $N$:
\begin{equation*}
\tilde N([Z]) := DK_a^{-1}\{ DK_a \hat {\bf z} - K(\hat {\bf z}) + (DK_a - DK([Z])) ([Z] - \hat {\bf z})\},
\end{equation*}
where $\hat {\bf z}$ is the center point of $[Z]$,
which is obtained by considering the mean value form of $N$.

Also note that, if Algorithm \ref{alg-fixpt} returns \lq\lq succeeded" at $k=k_0$, we know that the Poincar\'{e} map can be defined on its Poincar\'{e} section $\Gamma\cap [W_{k_0}]$.
\end{remark}

% New section
\section{Numerical examples for flows}
\label{section-example-cont}

As demonstrating validations of Lyapunov functions for flows, we consider the three dimensional FitzHugh-Nagumo system:
\begin{align}
\notag
\frac{du}{dt} &= v,\\
\label{FN}
\frac{dv}{dt} &= \frac{1}{\delta}(cv-f(u)+w),\\
\notag
\frac{dw}{dt} &= \frac{\epsilon}{c}(u-\gamma w),
\end{align}
where $f(u) = u(u-a)(1-u)$, and $a, c, \delta, \epsilon$ and $\gamma$ are (positive) parameters.
The system (\ref{FN}) is regarded as the traveling wave equation of the following partial differential equation:
\begin{align*}
\frac{\partial u}{\partial \tau} &= \delta \frac{\partial^2 u}{\partial x^2} - f(u) + w,\\
\frac{\partial w}{\partial \tau} &= \epsilon (u-\gamma w)
\end{align*}
with $(u(x,\tau),w(x,\tau)) = (u(t),w(t))$, $t = x-c\tau$.

Here we fix parameters as follows so that the system possesses three equilibria:
\begin{equation*}
a=0.2,\quad c=5,\quad \delta = 5,\quad \epsilon = 0.15,\quad \gamma = 20. 
\end{equation*}

\begin{remark}\rm
Throughout our computations, we have used the following computation environments.
\begin{itemize}
\item OS: Windows $7$ Professional $64$-bit (6.1, Build 7601) Service Pack 1 (7601. win7sp1\_gdr.150928-1507).
\item Memory: 16384MB RAM.
\item Software: MATLAB version R2012a and INTLAB version: v6 \cite{INTLAB}.
\end{itemize}
\end{remark}

Equilibria with these parameters can be (approximately) computed below:
\begin{align*}
{\bf x}_1^\ast &= (u_1^\ast ,v_1^\ast ,w_1^\ast ) = \left(0,0,0\right),\\
{\bf x}_2^\ast &= (u_2^\ast ,v_2^\ast ,w_2^\ast ) \approx \left(0.268337520964460,0,0.013416876048223\right),\\
{\bf x}_3^\ast &= (u_3^\ast ,v_3^\ast ,w_3^\ast ) \approx \left(0.931662479035540,0,0.046583123951777\right).
\end{align*}

It numerically turns out that 
\begin{align*}
&\dim W^s({\bf x}_1^\ast) = 2,\quad \dim W^s({\bf x}_2^\ast) = 1,\quad \dim W^s({\bf x}_3^\ast) = 2,\\
&\dim W^u({\bf x}_1^\ast) = 1,\quad \dim W^u({\bf x}_2^\ast) = 2,\quad \dim W^u({\bf x}_3^\ast) = 1.
\end{align*}

We compute symmetric matrices $Y_j$, $j=1,2,3$, in (\ref{Y-cont}) to obtain
	\begin{align*}
	Y_1&=\left(\begin{array}{ccc} 
	1.9045048614 &-1.9684846596 &-0.7930467270 \\
	-1.9684846596 &-1.8022725548 &0.2703701350 \\ 
	-0.7930467270 &0.2703701350 &2.3772099623
	\end{array}\right),
	\end{align*}
	\begin{align*}
	Y_2&=\left(\begin{array}{ccc} 
	-2.2485667721 & 2.5290401659 & 0.6528894939 \\ 
	2.5290401659 &-6.9945543958 &-1.2823752332 \\ 
	0.6528894939 &-1.2823752332 & 1.8533105010
	\end{array}\right),
	\end{align*}
	\begin{align*}
	Y_3&=\left(\begin{array}{ccc} 
	1.7202944579 &-1.8934526570 &-0.8110063777 \\ 
	-1.8934526570 &-1.6051655895 & 0.3062332874 \\ 
	-0.8110063777 & 0.3062332874 & 2.4375167185
	\end{array}\right).
	\end{align*}
Note that matrices $Y_j$, $j=1,2,3$, are indeed symmetric.
We now verify Lyapunov domains of $x_j^\ast$.
Firstly, set sample domains $D_j$, $j=1,2,3$, as
\begin{align*}
D_1 &:= [-0.5, 0.5]\times [-0.5, 0.5]\times [-0.5, 0.5],\\
D_2 &:= [0, 1]\times [-0.5, 0.5]\times [-0.5, 0.5],\\
D_3 &:= [0.5, 1.5]\times [-0.5, 0.5]\times [-0.5, 0.5], 
\end{align*}
each of which contains ${\bf x}_j^\ast$. 
Let 
\begin{equation*}
L_j({\bf x}) := ({\bf x}-{\bf x}_j^\ast)^T Y_j ({\bf x}-{\bf x}_j^\ast),\quad j=1,2,3,
\end{equation*}
be the candidates of Lyapunov functions.
Next we divide these domains into $50\times 50\times 50$ small uniform cubes. 
We then verify the strict negative definiteness of the matrix $A({\bf z})$ in (\ref{matrix-neg-def}) on each small cubes.
Note that, if the matrix $A({\bf z})$ associated with $L_j$ is strictly negative definite on a cube, then $L_j$ is a Lyapunov function on it.

\begin{description}
\item[Description of Figs. \ref{fig-FNfp} - \ref{fig-LorenzEvec}] 
\end{description}
In these figures we distinguish verification results of Lyapunov domains by different colors.
\begin{itemize}
\item {\bf Blue}: Both Stage 1 and 2 are succeeded.
\item {\bf Light Blue}: Only Stage 1 is succeeded.
\item {\bf Yellow}: Only Stage 2 is succeeded.
\item {\bf Red}: Both Stage 1 and 2 are failed.
\end{itemize}
White disks denote locations of equilibria.

\bigskip
Fig. \ref{fig-FNfp} describes domains $D_j\cap \{w=0\}$ and contours of Lyapunov fucntions $L_j$.

\begin{figure}[htbp]\em
\begin{minipage}{0.32\hsize}
\centering
\includegraphics[width=5.0cm]{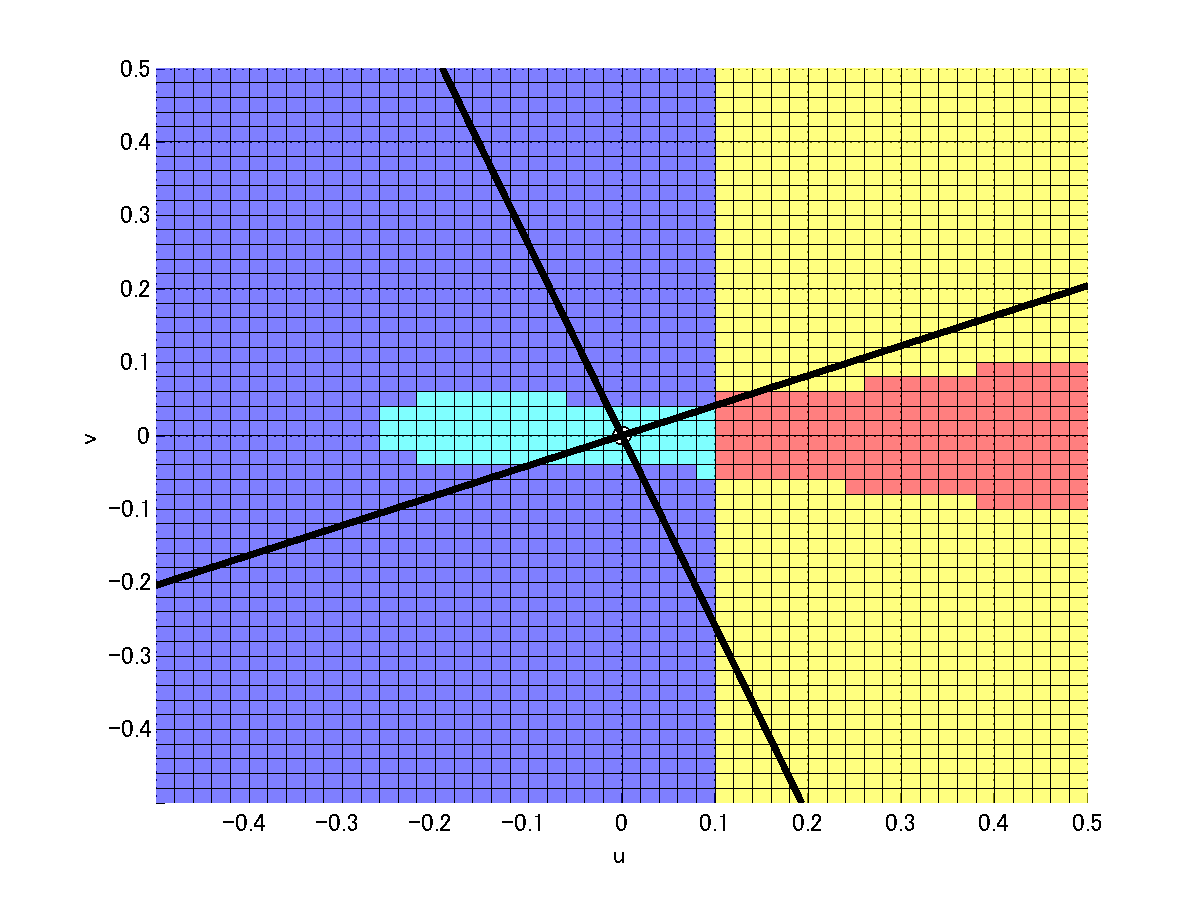}
(a)
\end{minipage}
\begin{minipage}{0.32\hsize}
\centering
\includegraphics[width=5.0cm]{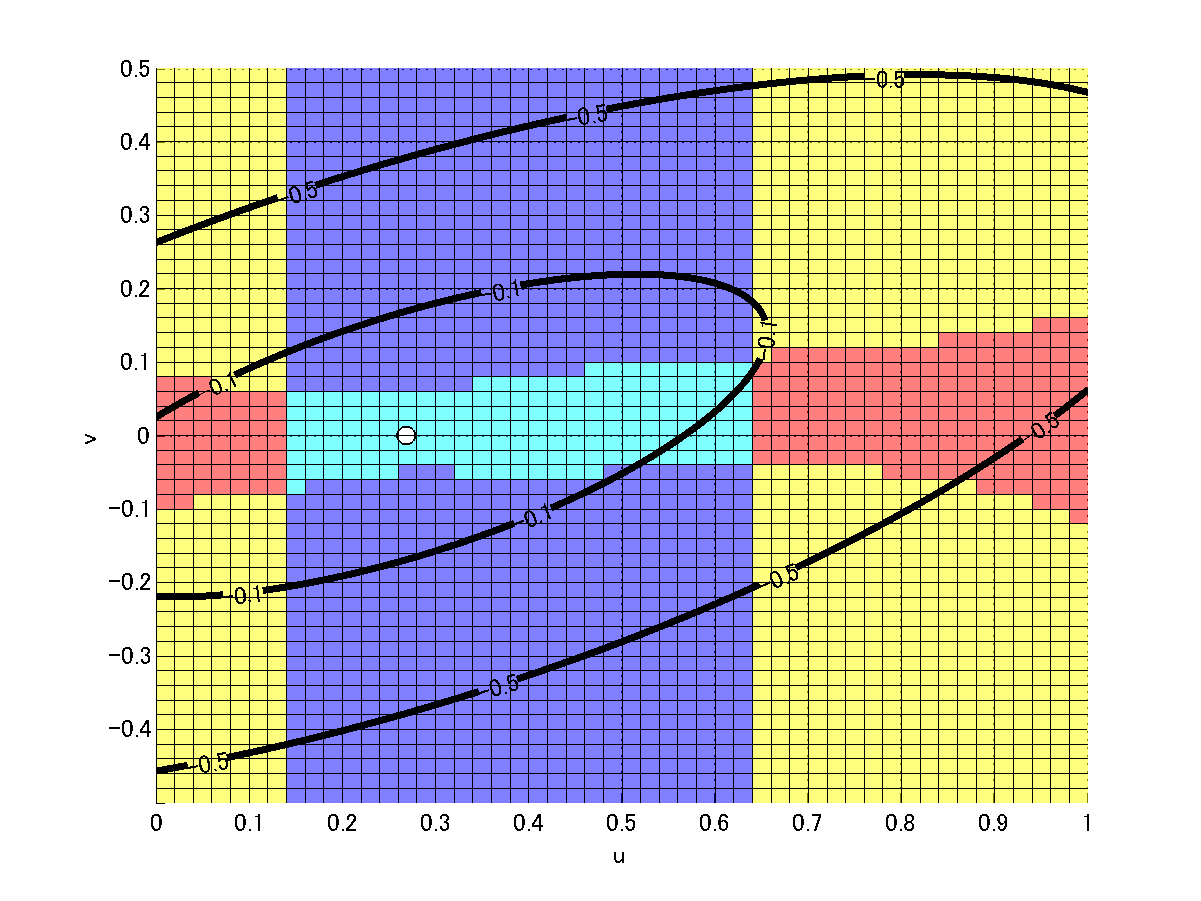}
(b)
\end{minipage}
\begin{minipage}{0.32\hsize}
\centering
\includegraphics[width=5.0cm]{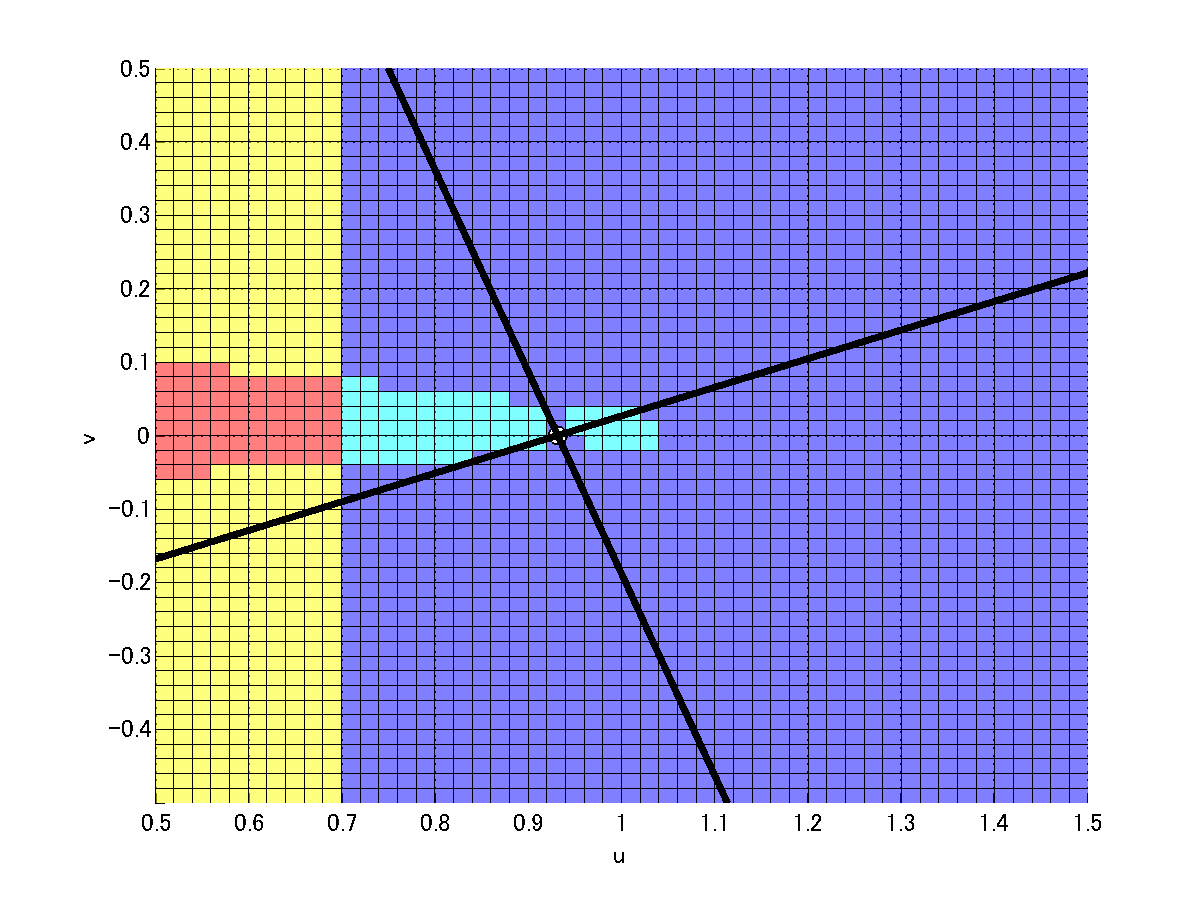}
(c)
\end{minipage}
\caption{Domains $D_i\cap \{w=0\}$, $i=1,2,3$, and contours of Lyapunov functions.}
\label{fig-FNfp}
\begin{flushleft}
(a) shows $D_1\cap \{w=0\}$. Black lines represent contours $\{L_1=0\}$.
\par
(b) shows $D_2^\ast\cap \{w=0\}$. Black curves represent contours $\{L_2=-0.5, -0.1\}$.
\par
(c) shows $D_3\cap \{w=0\}$. Black lines represent contours $\{L_3=0\}$.
\end{flushleft}
\end{figure}

\subsection{Validation of Lyapunov functions}

Figs. \ref{fig-FNfp-V1} - \ref{fig-FNfp-V5} describe sections of domains $D_j$ with $\{v = \text{const.}\}$ with contours of Lyapunov functions $L_j$.

\begin{figure}[htbp]\em
\begin{minipage}{0.32\hsize}
\centering
\includegraphics[width=5.0cm]{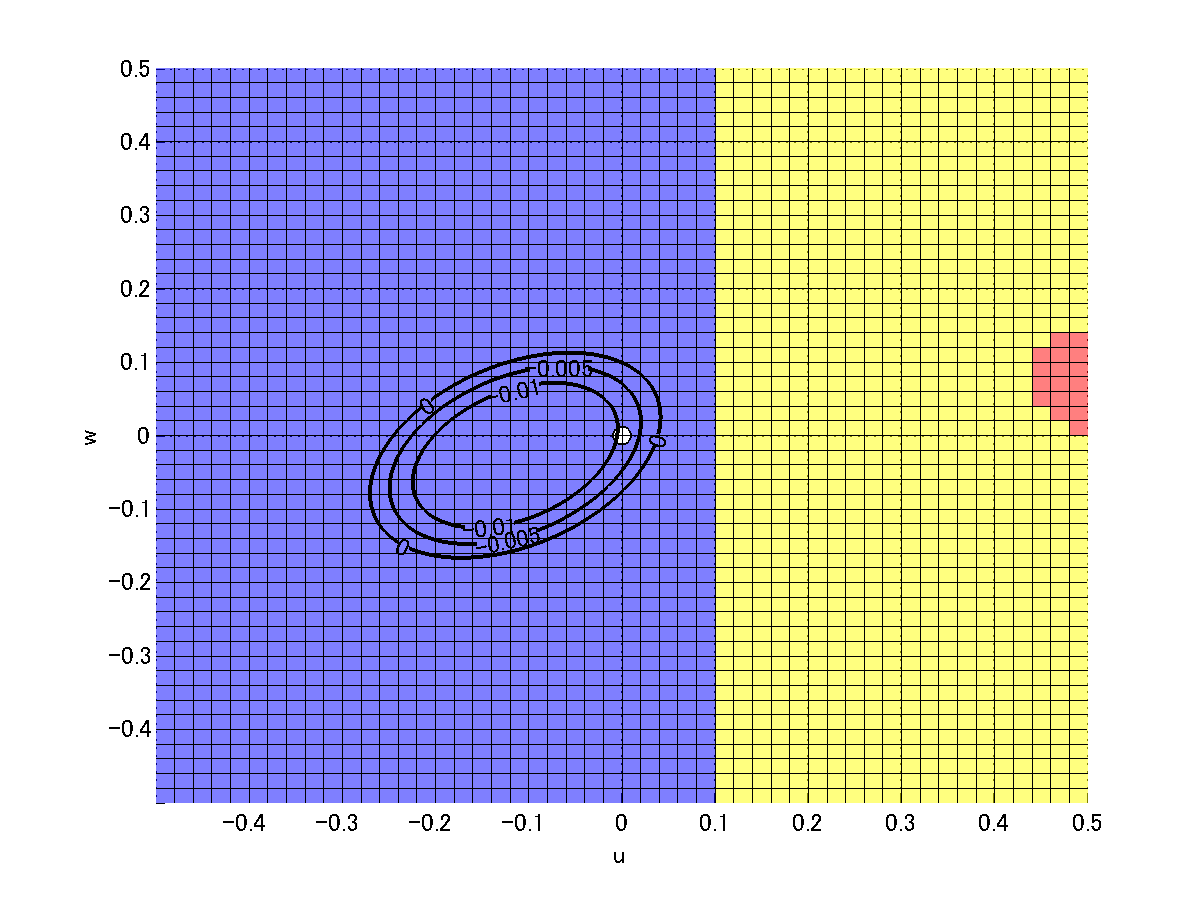}
(a)
\end{minipage}
\begin{minipage}{0.32\hsize}
\centering
\includegraphics[width=5.0cm]{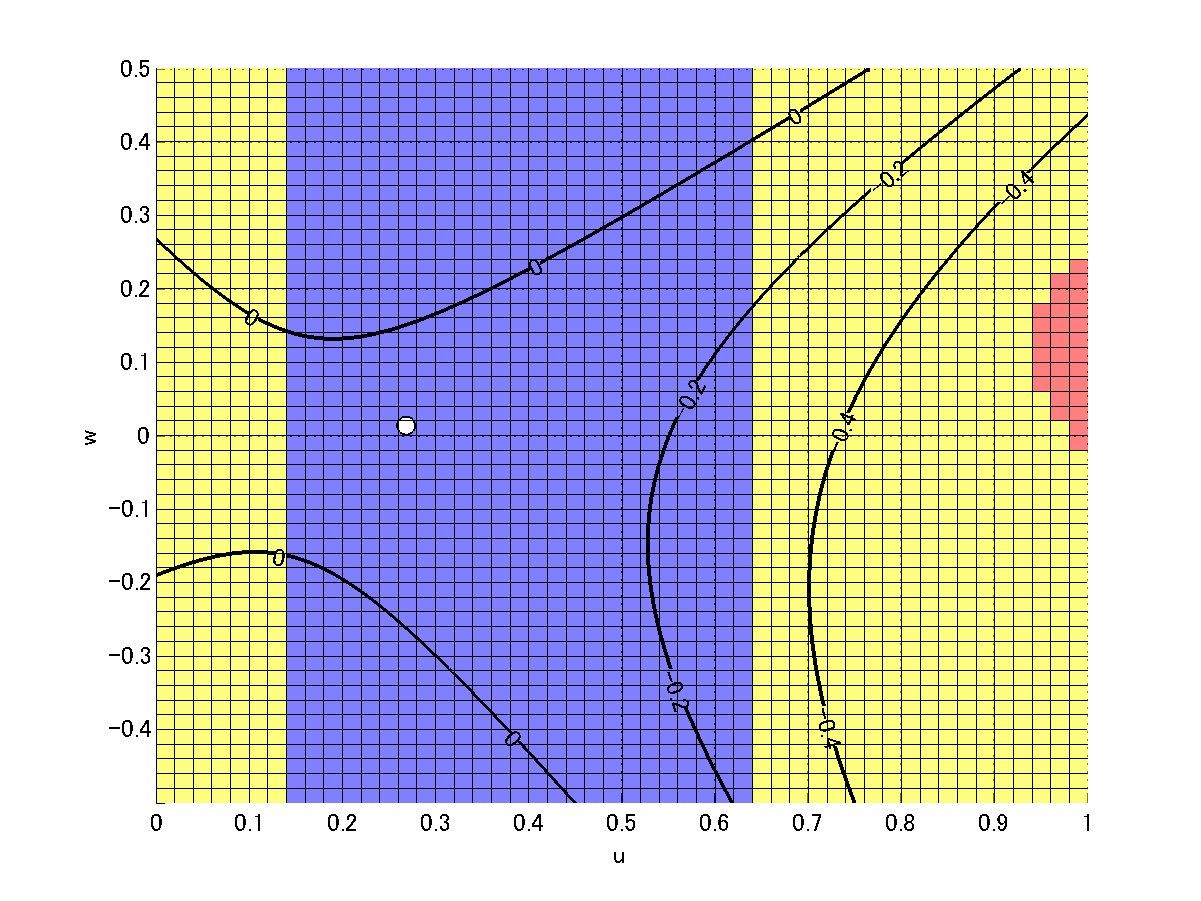}
(b)
\end{minipage}
\begin{minipage}{0.32\hsize}
\centering
\includegraphics[width=5.0cm]{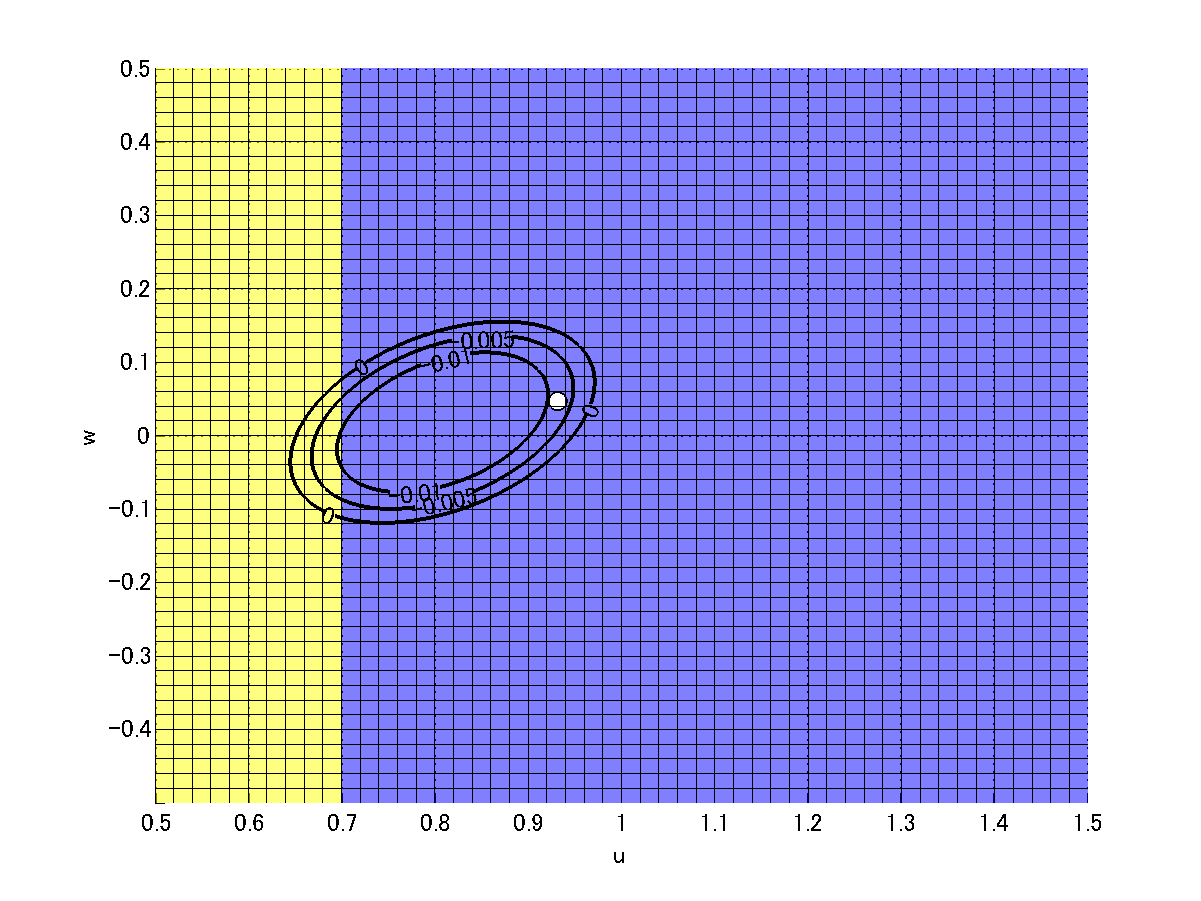}
(c)
\end{minipage}
\caption{Domains $D_i\cap \{v=-0.1\}$, $i=1,2,3$, and contours of Lyapunov functions.}
\label{fig-FNfp-V1}
\begin{flushleft}
(a) shows $D_1\cap \{v=-0.1\}$. Black curves represent contours $\{L_1=-0.01, -0.005, 0\}$.
\par
(b) shows $D_2^\ast\cap \{v=-0.1\}$. Black curves represent contours $\{L_2=-0.4, -0.2, 0\}$.
\par
(c) shows $D_3\cap \{v=-0.1\}$. Black curves represent contours $\{L_3=-0.01, -0.005, 0\}$.
\end{flushleft}
\end{figure}

\begin{figure}[htbp]\em
\begin{minipage}{0.32\hsize}
\centering
\includegraphics[width=5.0cm]{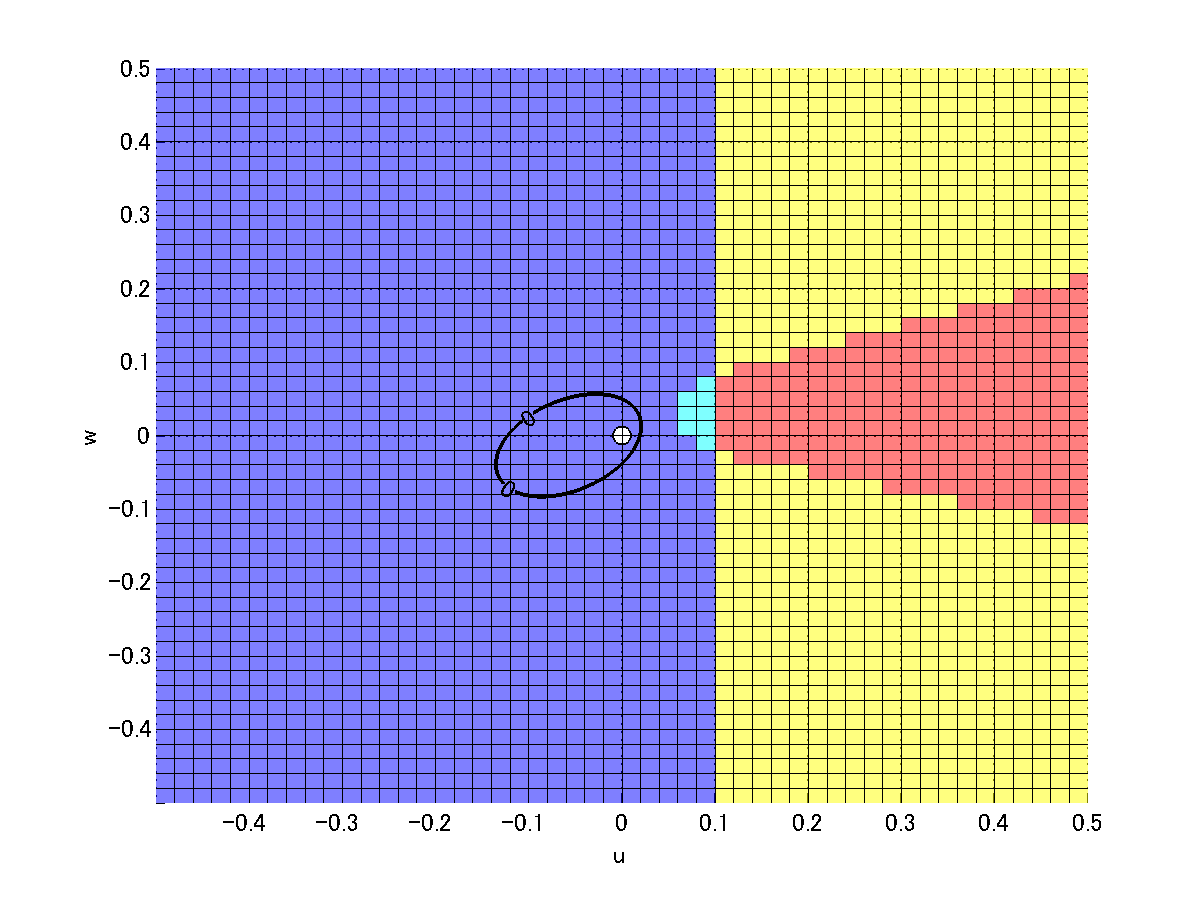}
(a)
\end{minipage}
\begin{minipage}{0.32\hsize}
\centering
\includegraphics[width=5.0cm]{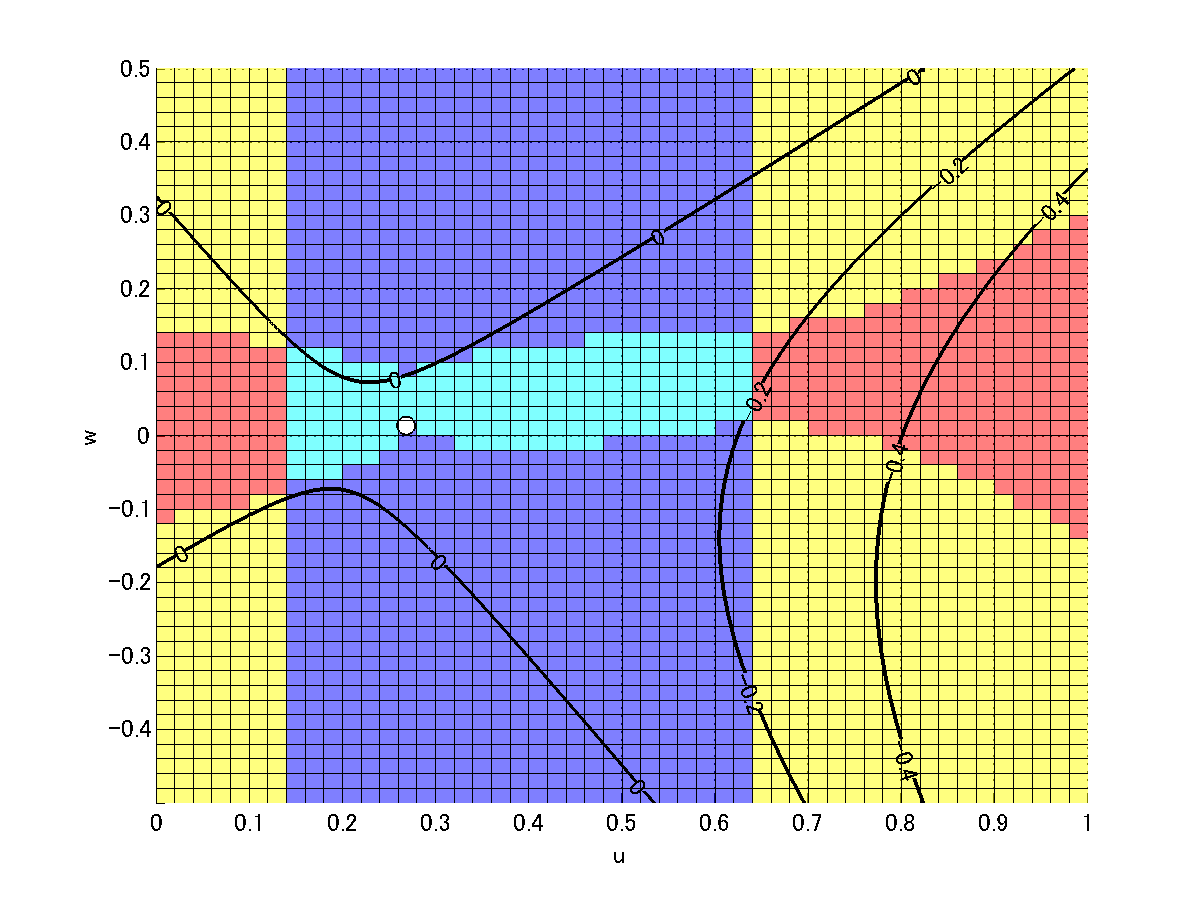}
(b)
\end{minipage}
\begin{minipage}{0.32\hsize}
\centering
\includegraphics[width=5.0cm]{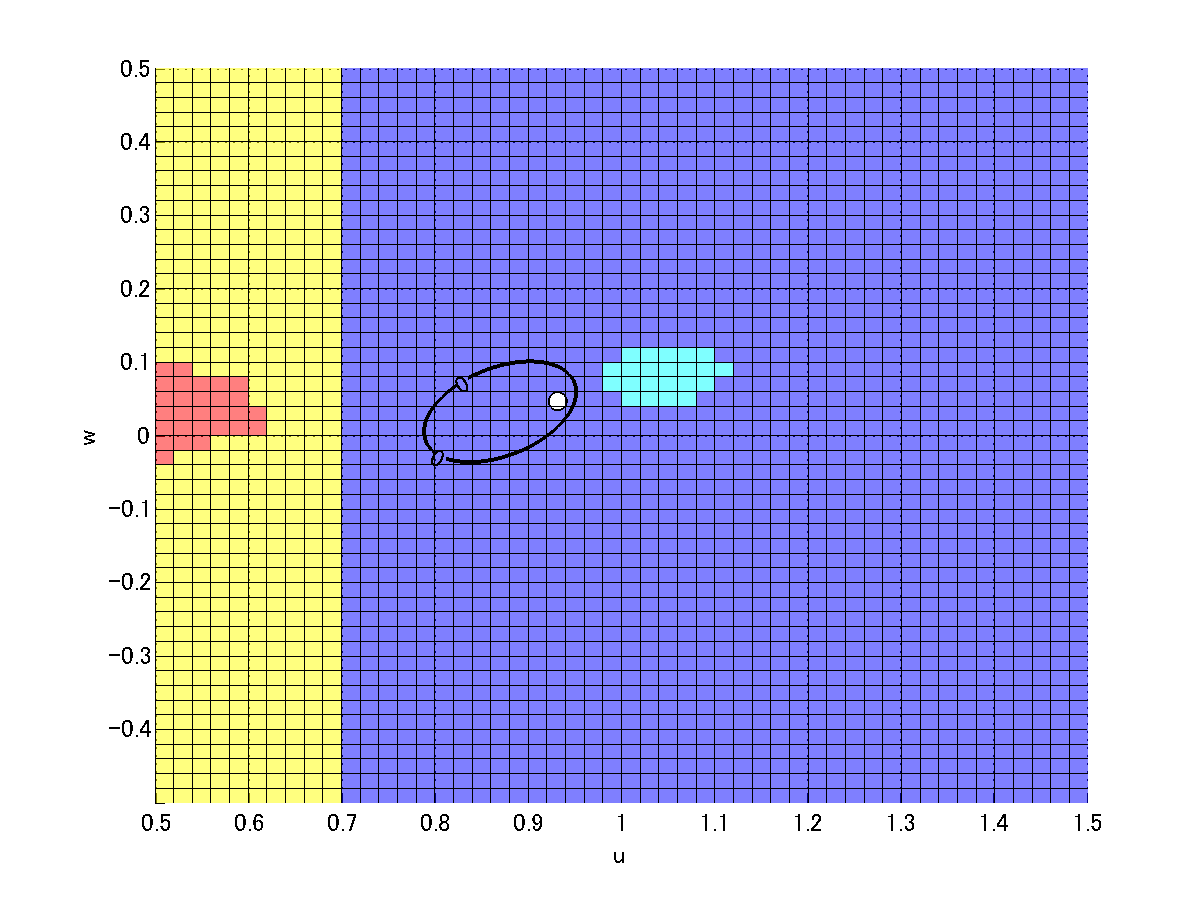}
(c)
\end{minipage}
\caption{Domains $D_i\cap \{v=-0.05\}$, $i=1,2,3$, and contours of Lyapunov functions.}
\label{fig-FNfp-V2}
\begin{flushleft}
(a) shows $D_1\cap \{v=-0.05\}$. Black curve represents the contour $\{L_1= 0\}$.
\par
(b) shows $D_2^\ast\cap \{v=-0.05\}$. Black curves represent $\{L_2=-0.4, -0.2, 0\}$.
\par
(c) shows $D_3\cap \{v=-0.05\}$. Black curve represents the contour $\{L_3= 0\}$.
\end{flushleft}
\end{figure}

\begin{figure}[htbp]\em
\begin{minipage}{0.32\hsize}
\centering
\includegraphics[width=5.0cm]{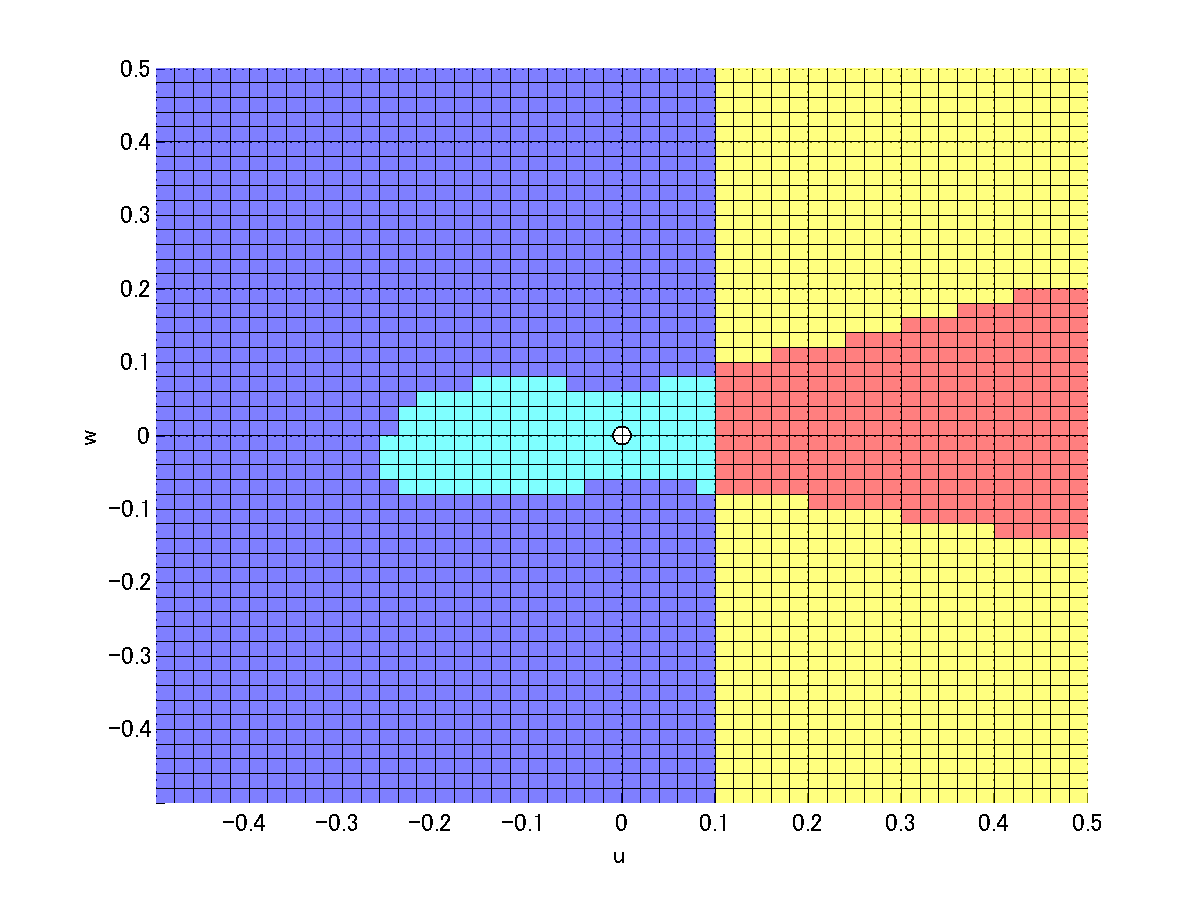}
(a)
\end{minipage}
\begin{minipage}{0.32\hsize}
\centering
\includegraphics[width=5.0cm]{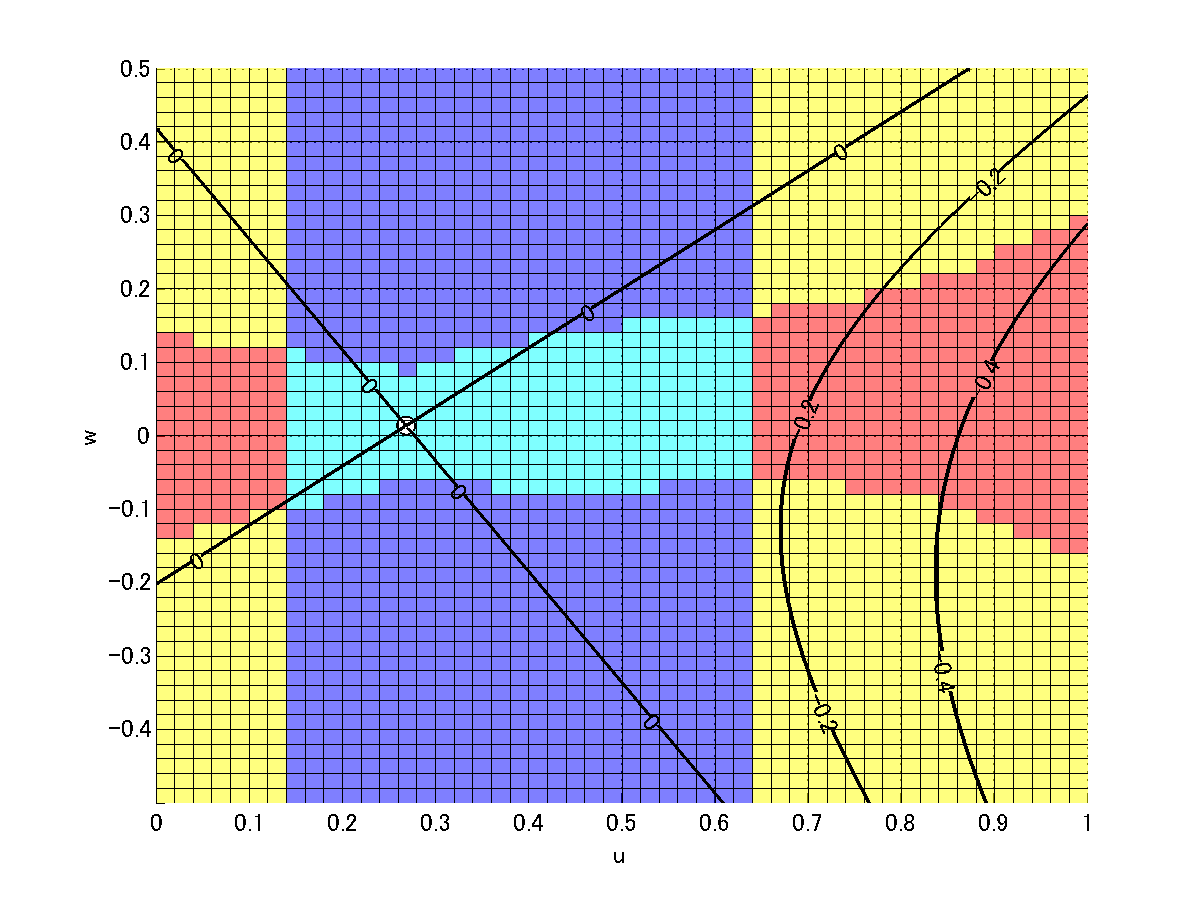}
(b)
\end{minipage}
\begin{minipage}{0.32\hsize}
\centering
\includegraphics[width=5.0cm]{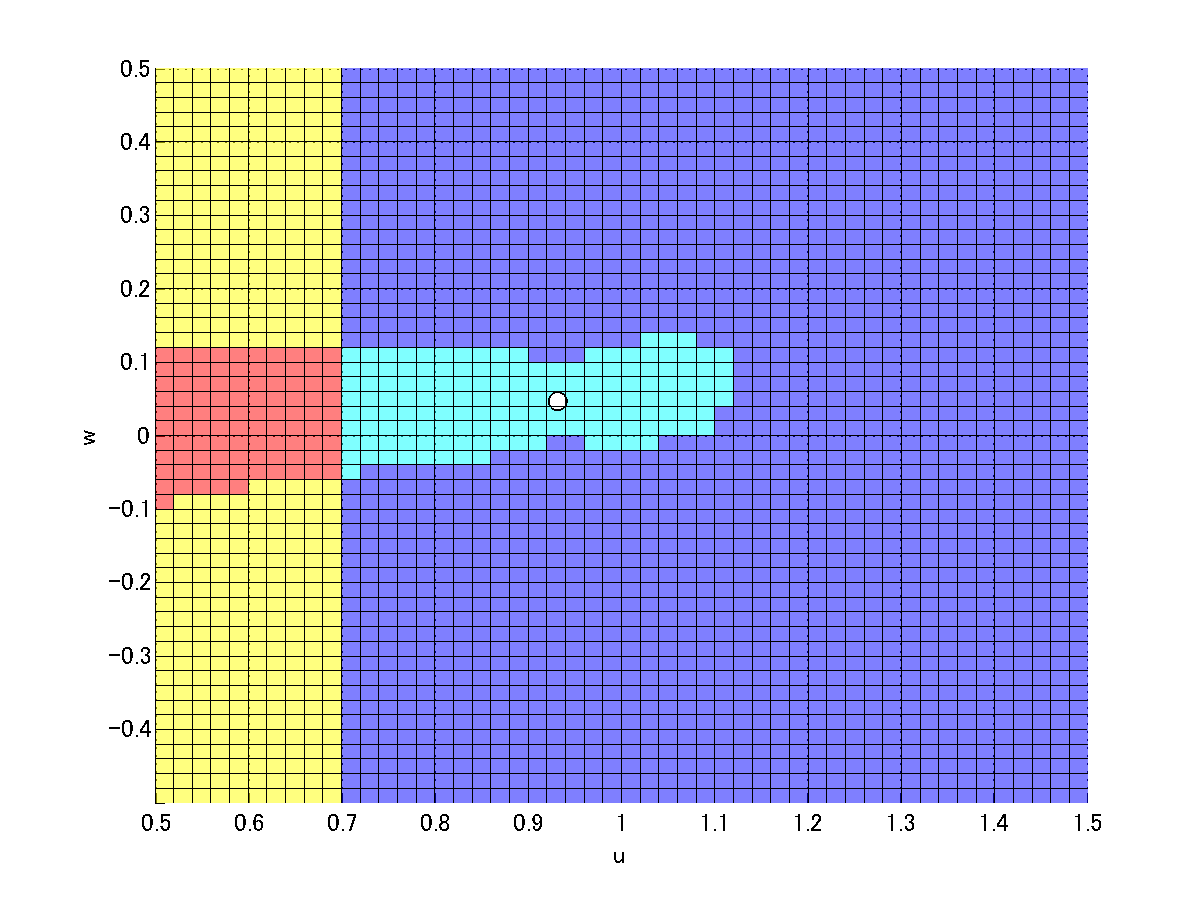}
(c)
\end{minipage}
\caption{Domains $D_i\cap \{v=0\}$, $i=1,2,3$, and contours of Lyapunov functions.}
\label{fig-FNfp-V3}
\begin{flushleft}
(a) shows $D_1\cap \{v=0\}$. 
\par
(b) shows $D_2^\ast\cap \{v=0\}$. Black curves represent contours $\{L_2=-0.4, -0.2, 0\}$.
\par
(c) shows $D_3\cap \{v= 0\}$. 
\end{flushleft}
\end{figure}

\begin{figure}[htbp]\em
\begin{minipage}{0.32\hsize}
\centering
\includegraphics[width=5.0cm]{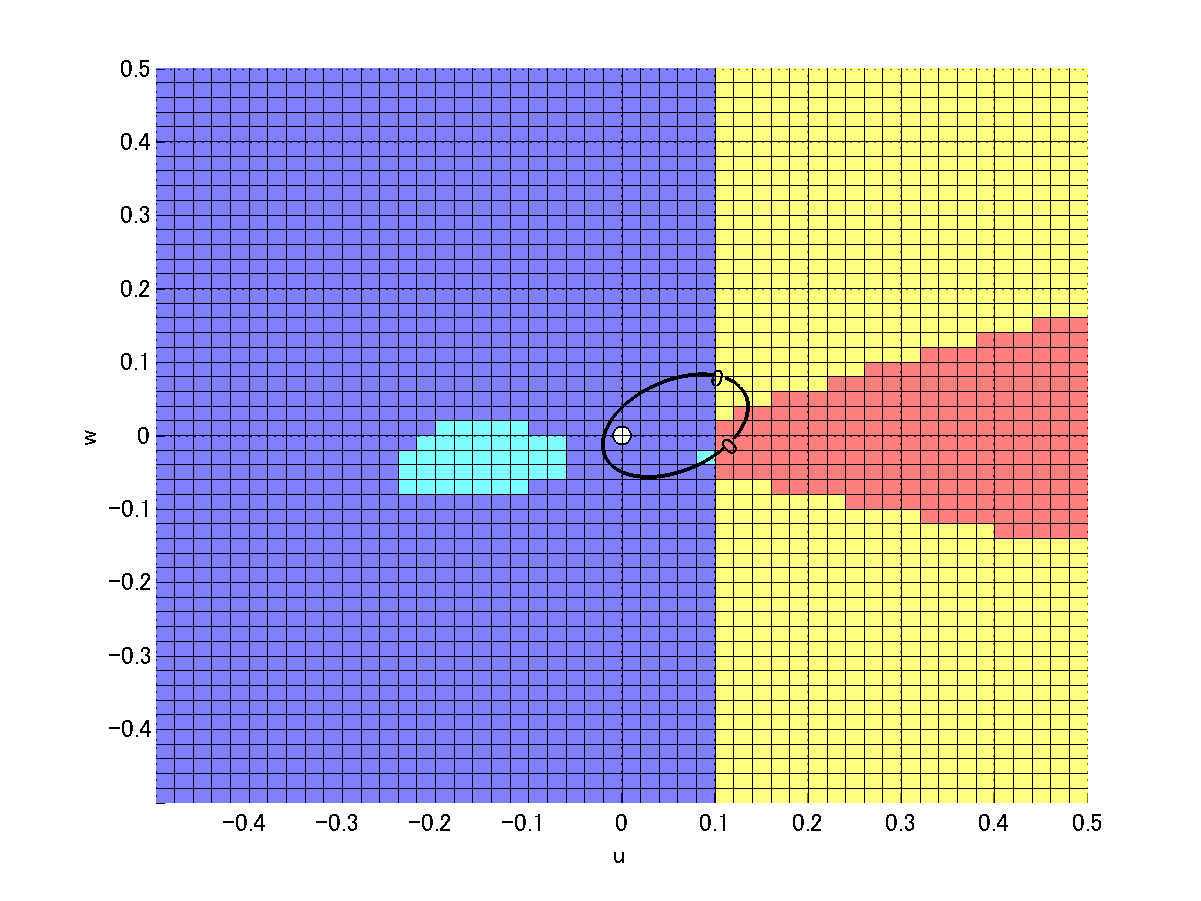}
(a)
\end{minipage}
\begin{minipage}{0.32\hsize}
\centering
\includegraphics[width=5.0cm]{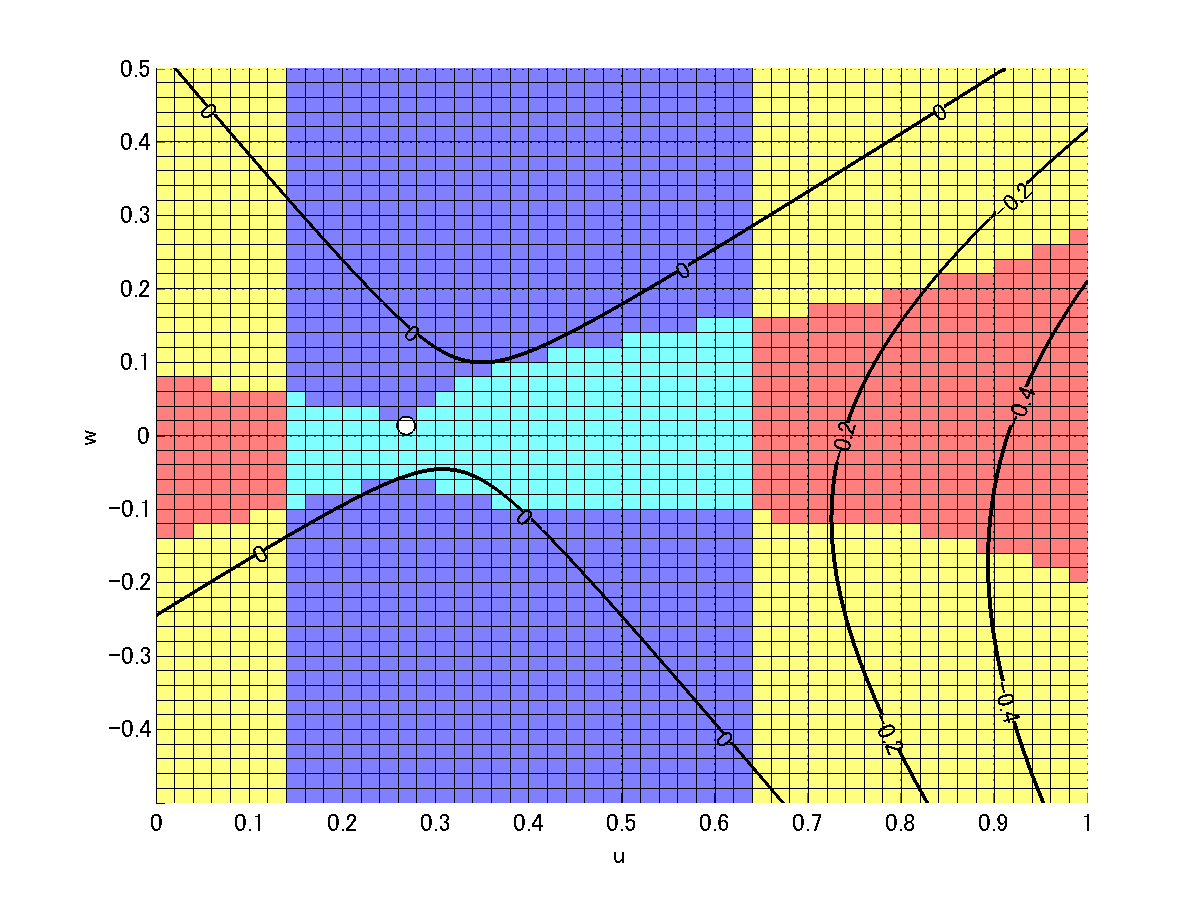}
(b)
\end{minipage}
\begin{minipage}{0.32\hsize}
\centering
\includegraphics[width=5.0cm]{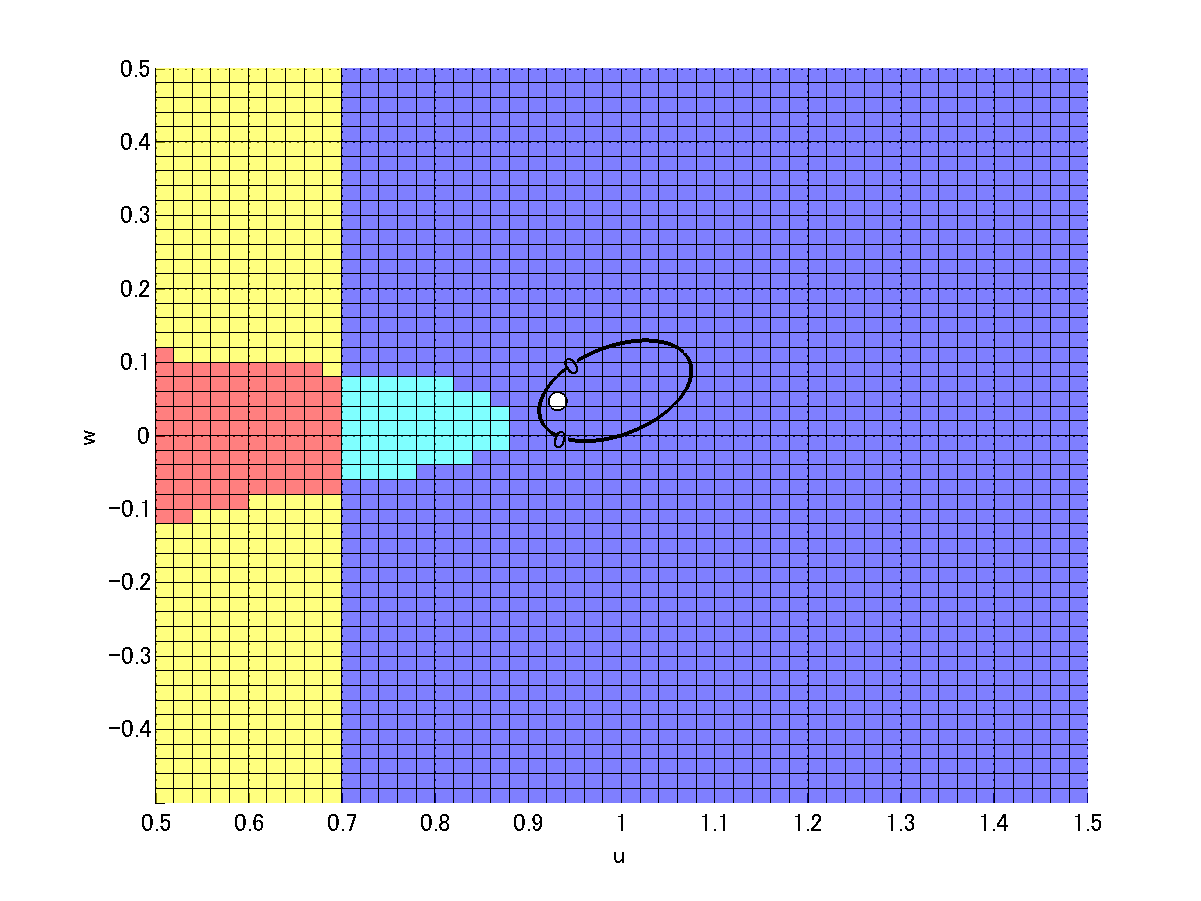}
(c)
\end{minipage}
\caption{Domains $D_i\cap \{v=0.05\}$, $i=1,2,3$, and contours of Lyapunov functions.}
\label{fig-FNfp-V4}
\begin{flushleft}
(a) shows $D_1\cap \{v=0.05\}$. Black curve represents the contour $\{L_1= 0\}$.
\par
(b) shows $D_2^\ast\cap \{v=0.05\}$. Black curves represent contours $\{L_2=-0.4, -0.2, 0\}$.
\par
(c) shows $D_3\cap \{v=0.05\}$. Black curve represents the contour $\{L_3= 0\}$.
\end{flushleft}
\end{figure}

\begin{figure}[htbp]\em
\begin{minipage}{0.32\hsize}
\centering
\includegraphics[width=5.0cm]{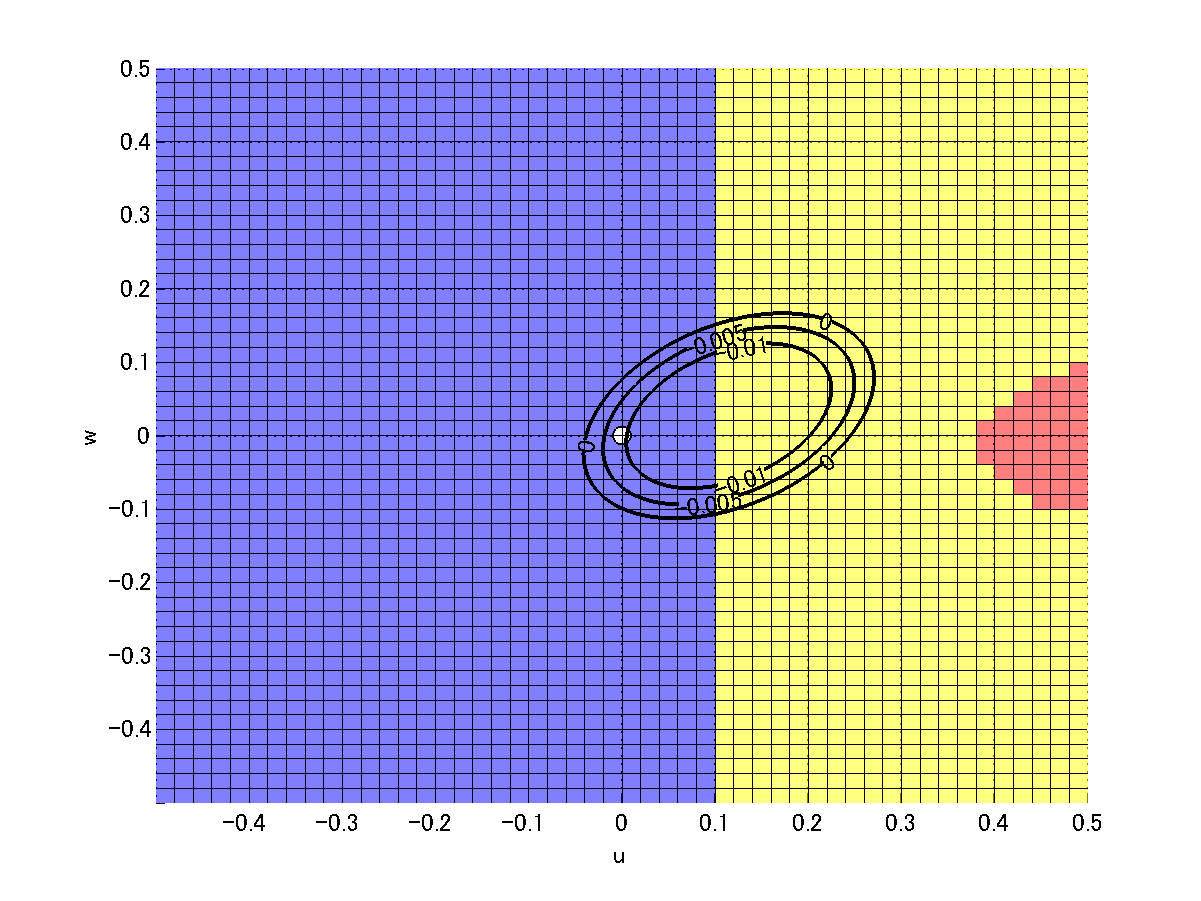}
(a)
\end{minipage}
\begin{minipage}{0.32\hsize}
\centering
\includegraphics[width=5.0cm]{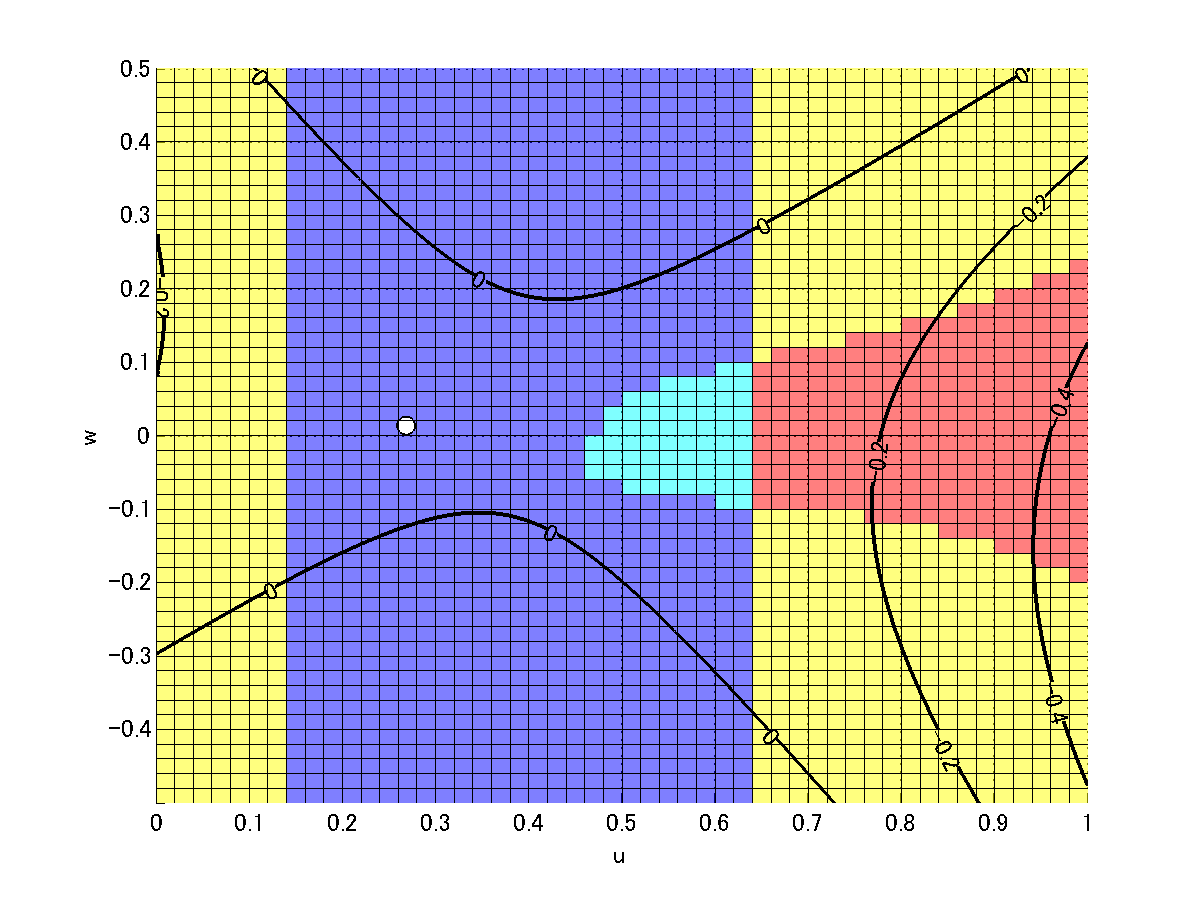}
(b)
\end{minipage}
\begin{minipage}{0.32\hsize}
\centering
\includegraphics[width=5.0cm]{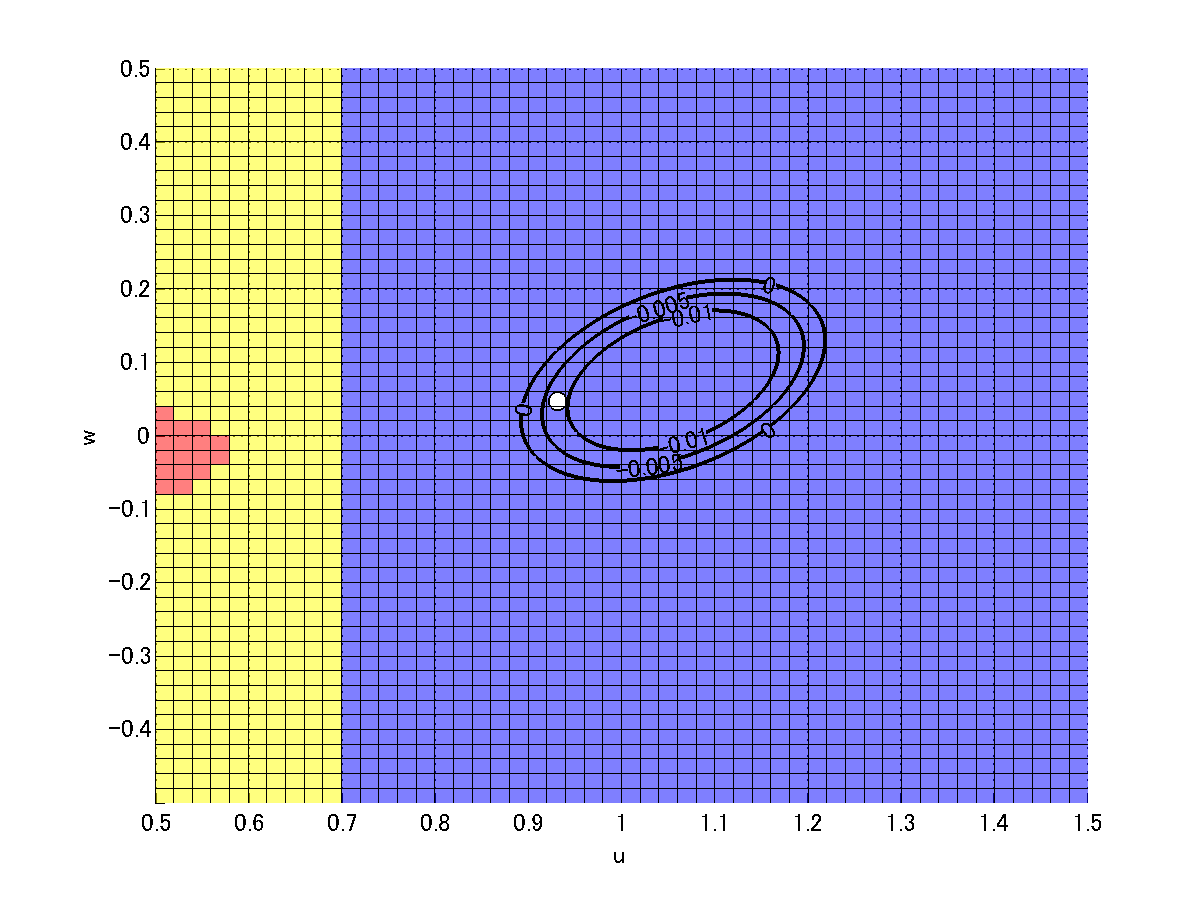}
(c)
\end{minipage}
\caption{Domains $D_i\cap \{v=0.1\}$, $i=1,2,3$, and contours of Lyapunov functions.}
\label{fig-FNfp-V5}
\begin{flushleft}
(a) shows $D_1\cap \{v=0.1\}$. Black curves represent contours $\{L_1=-0.01, -0.005, 0\}$.
\par
(b) shows $D_2^\ast\cap \{v=0.1\}$. Black curves represent contours $\{L_2=-0.4, -0.2, 0\}$.
\par
(c) shows $D_3\cap \{v=0.1\}$. Black curves represent contours $\{L_3=-0.01, -0.005, 0\}$.
\end{flushleft}
\end{figure}

\bigskip
Looking at Fig. \ref{fig-FNfp-V4}-(a), it turns out that red cubes are contained in the interior of $L_1^{-1}(0)$; namely, $L_1^{-1}(0,\infty)$. 
This can be also seen in Fig. \ref{fig-FNfp}-(a).
We then divide the domain $[0,0.4]\times [-0.2,0.2]\times [-0.2,0.2]$ into $200\times 200\times 200$ small uniform cubes and verify the strict negative definiteness of $A({\bf z})$ in (\ref{matrix-neg-def}) on these smaller cubes again.
Fig. \ref{fig-FNReVal} shows validation results in this setting, which shows that the Lyapunov domain in $D_1$ is extended.
More precisely, yellow area corresponding to succeeded domains in Stage 2 becomes larger.

\begin{figure}[htbp]\em
\begin{minipage}{0.32\hsize}
\centering
\includegraphics[width=5.0cm]{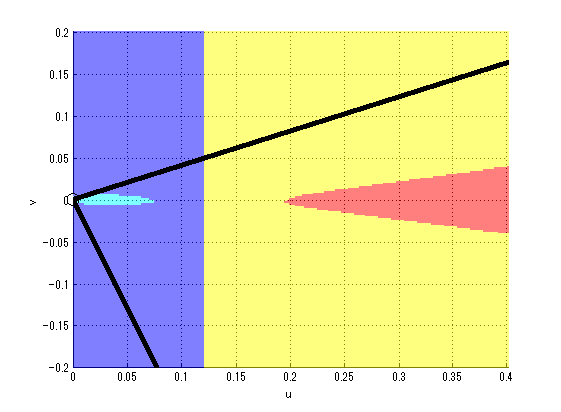}
(a)
\end{minipage}
\begin{minipage}{0.32\hsize}
\centering
\includegraphics[width=5.0cm]{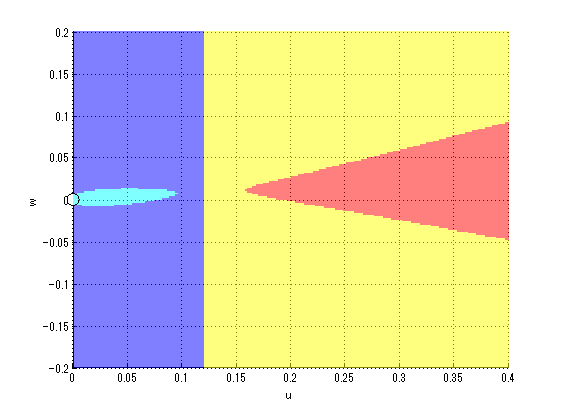}
(b)
\end{minipage}
\begin{minipage}{0.32\hsize}
\centering
\includegraphics[width=5.0cm]{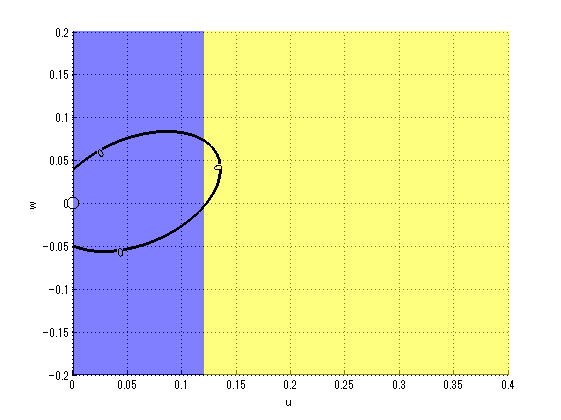}
(c)
\end{minipage}
\caption{Refinement of the region $[0,0.4]\times [-0.2,0.2]^2$ in Fig. \ref{fig-FNfp-V4}-(a). }
\label{fig-FNReVal}
\begin{flushleft}
(a) shows $D_1\cap \{w=0\}$. Black lines represent contours $\{L_1=0\}$.
\par
(b) shows $D_1^\ast\cap \{v=0\}$. %Black curves represent $\{L_2=0\}$.
\par
(c) shows $D_1\cap \{v=0.05\}$. Black curve represents the contour $\{L_3=0\}$.
\end{flushleft}
\end{figure}

\bigskip
Finally, note that there are several regions where only validations in Stage 2 are succeeded.
Lyapunov domains containing such domains include geometric cones (e.g. yellow domains in Figs. \ref{fig-FNfp}-(c) and \ref{fig-FNfp-V1}-(c)), but we cannot apply the theory of dynamical cone $Q$ to asymptotic behavior in those domains in general.

\subsection{Validation of ${\bf m}$-Lyapunov functions}

Next we construct ${\bf m}$-Lyapunov functions around ${\bf x}_1^\ast$ and compare Lyapunov domains with those of $L_1$.
Let $M^\ast = {\rm diag}\{i_1,\cdots, i_n\}$ be given by
\begin{align*}
i_j=\left\{\begin{array}{ccc}
m_j, & \text{if} & Re\left(\lambda_j\right)<0 \\ -m_j, & \text{if} & Re\left(\lambda_j\right)>0.
\end{array}\right.
\end{align*}
In our current validations, we set (i) $m_1 = 10, m_2 = 1$, and (ii) $m_1 = 1, m_2 = 10$.

Figs. \ref{fig-FNM1} - \ref{fig-FNM3} show our validation results for Lyapunov domains of $L_1$, (i) and (ii), respectively.
Note that the matrix $I^\ast$ with (i) makes the stable cone sharper.
Similarly, the matrix $I^\ast$ with (ii) makes the unstable cone sharper.

Figs. \ref{fig-FNM1}-(b) and \ref{fig-FNM2}-(b) imply that the stable cone contains more red cubes if it becomes sharper.
Similarly, Figs. \ref{fig-FNM1}-(c) and \ref{fig-FNM2}-(c) imply that there are many red cubes on the right side of equilibria.
${\bf m}$-Lyapunov functions actually sharpen enclosures of stable and unstable manifolds, but corresponding Lyapunov domains tend to be smaller than those for ordinary Lyapunov functions, even close to equilibria.

\begin{figure}[htbp]\em
\begin{minipage}{0.32\hsize}
\centering
\includegraphics[width=5.0cm]{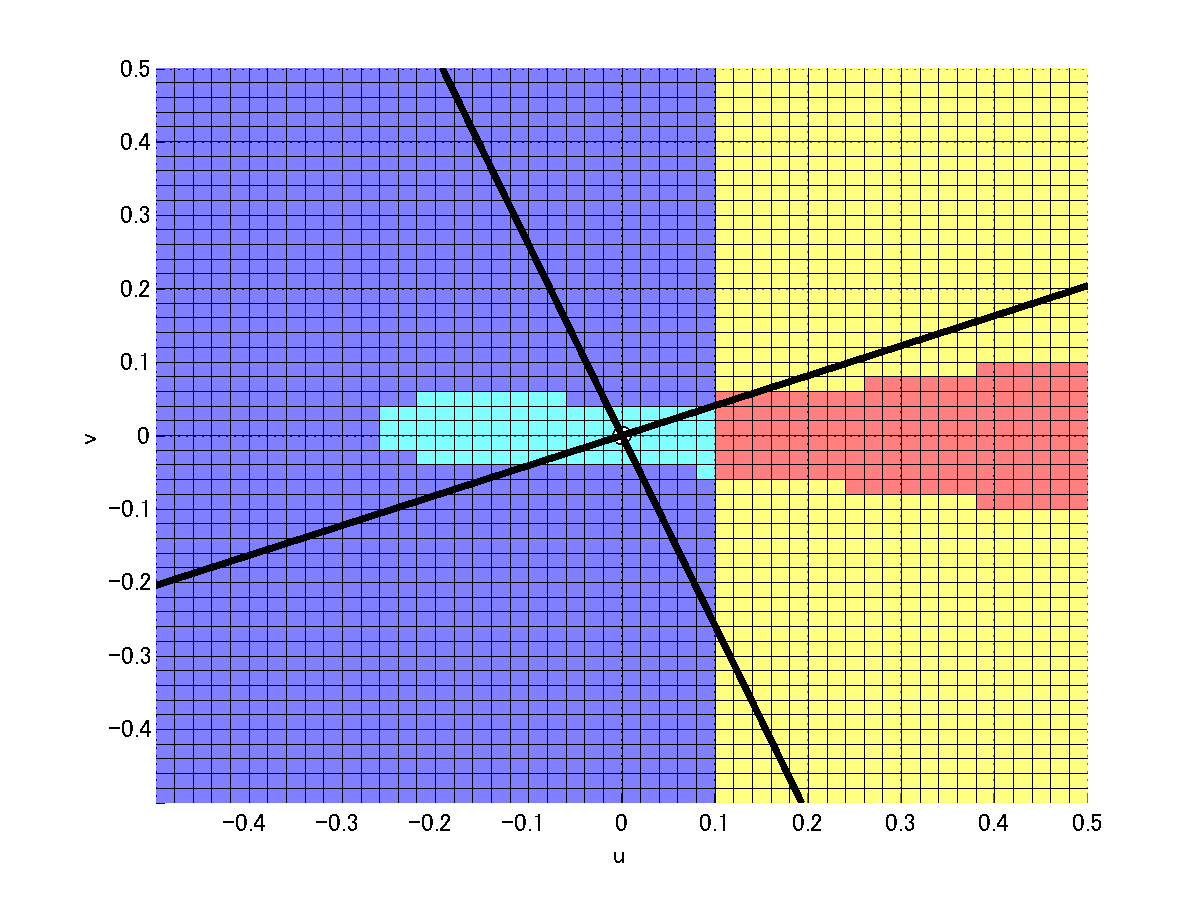}
(a)
\end{minipage}
\begin{minipage}{0.32\hsize}
\centering
\includegraphics[width=5.0cm]{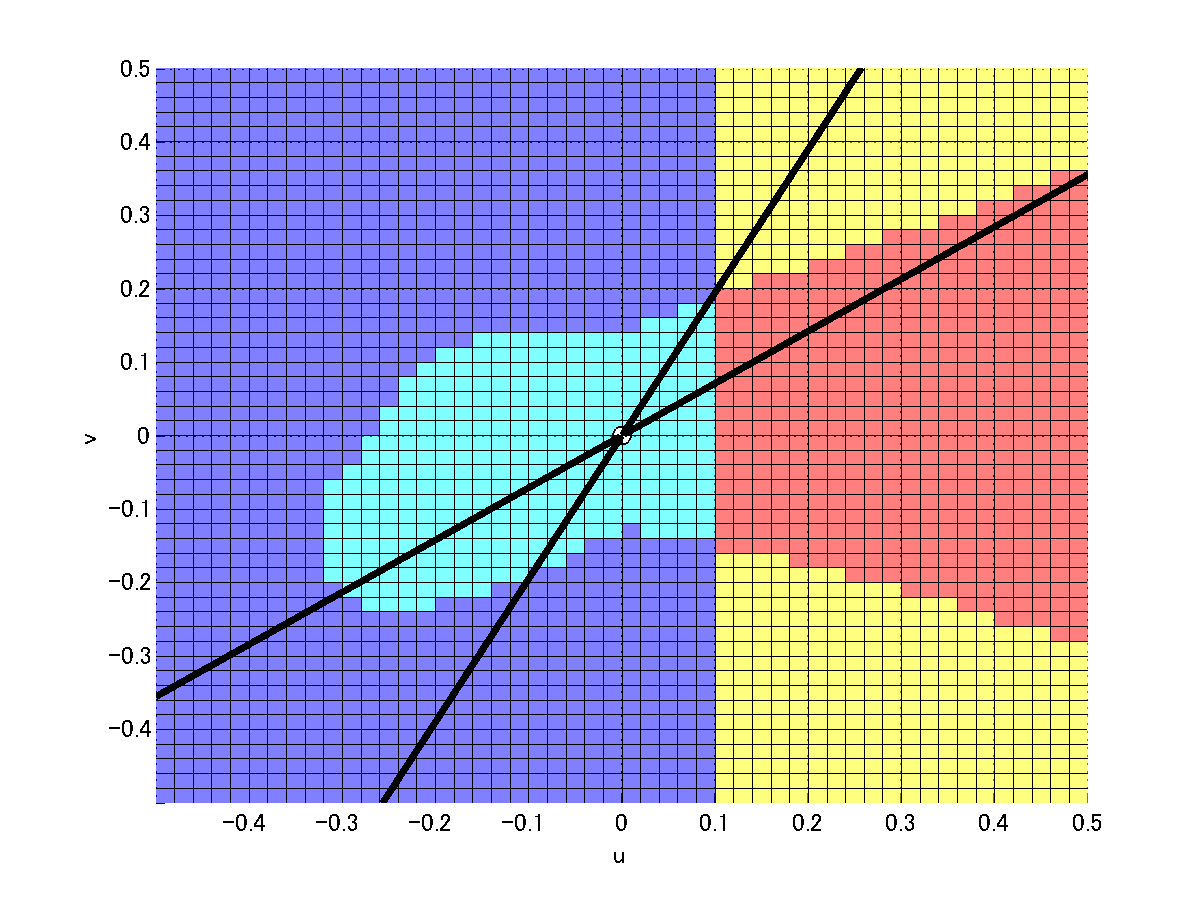}
(b)
\end{minipage}
\begin{minipage}{0.32\hsize}
\centering
\includegraphics[width=5.0cm]{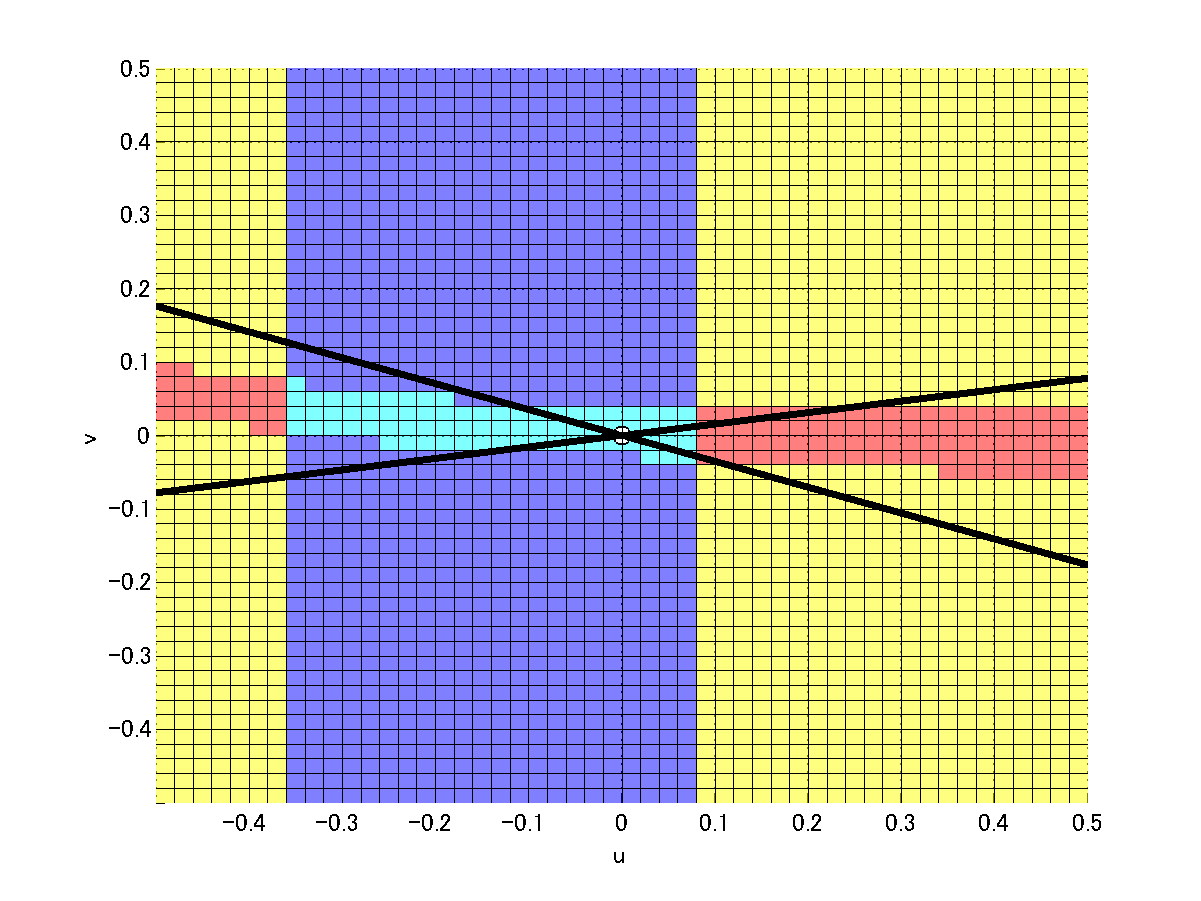}
(c)
\end{minipage}
\caption{Validation of ${\bf m}$-Lyapunov functions: 1.}
\label{fig-FNM1}
\begin{flushleft}
(a) shows $D_1\cap \{w=0\}$. Black lines represent contours $\{L_1=0\}$.
\par
(b) shows $D_1\cap \{w=0\}$. Black lines represent contours $\{L_{M^\ast}=0\}$, where $m_1 = 10, m_2 = 1$, which implies that the stable cone becomes sharper.
\par
(c) shows $D_1\cap \{w=0\}$. Black lines represent contours $\{L_{M^\ast}=0\}$, where $m_1 = 1, m_2 = 10$, which implies that the unstable cone becomes sharper.
\end{flushleft}
\end{figure}

\begin{figure}[htbp]\em
\begin{minipage}{0.32\hsize}
\centering
\includegraphics[width=5.0cm]{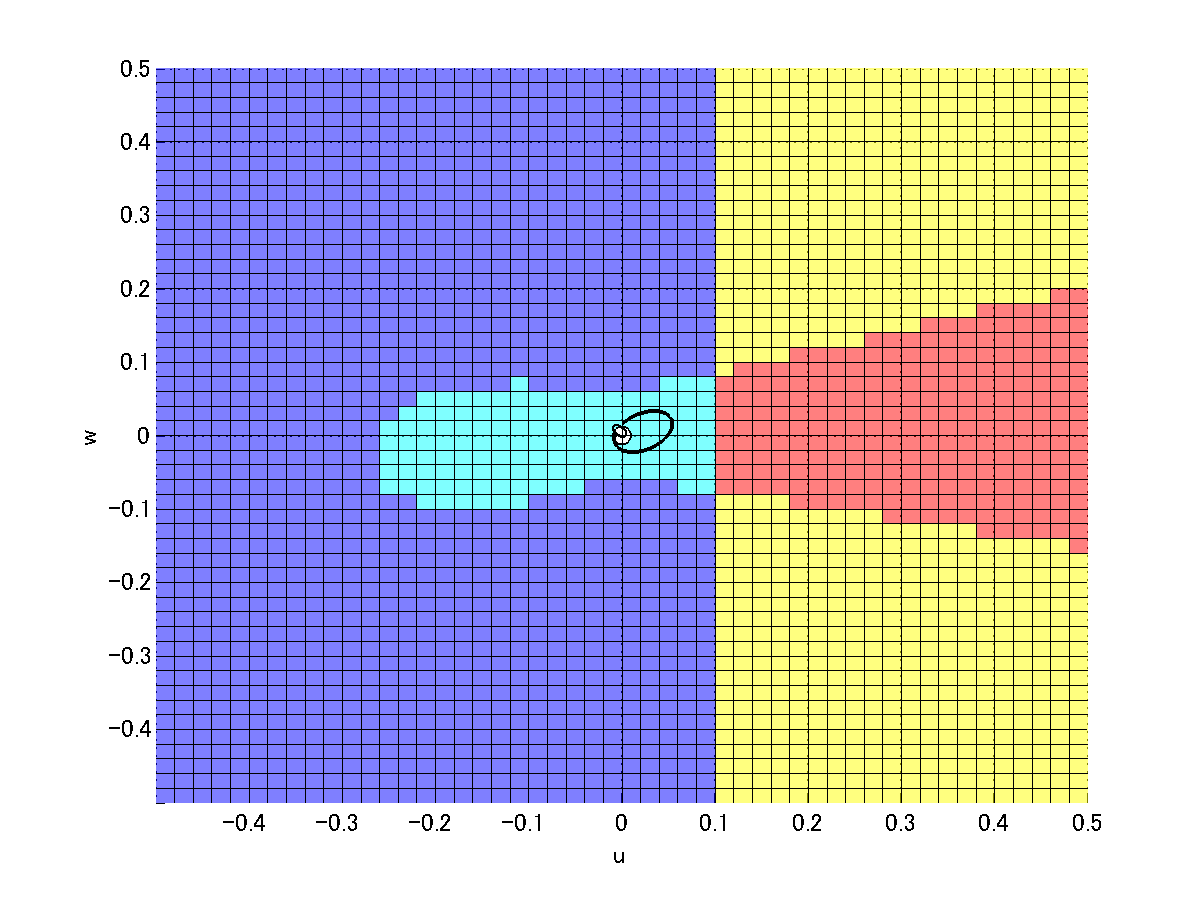}
(a)
\end{minipage}
\begin{minipage}{0.32\hsize}
\centering
\includegraphics[width=5.0cm]{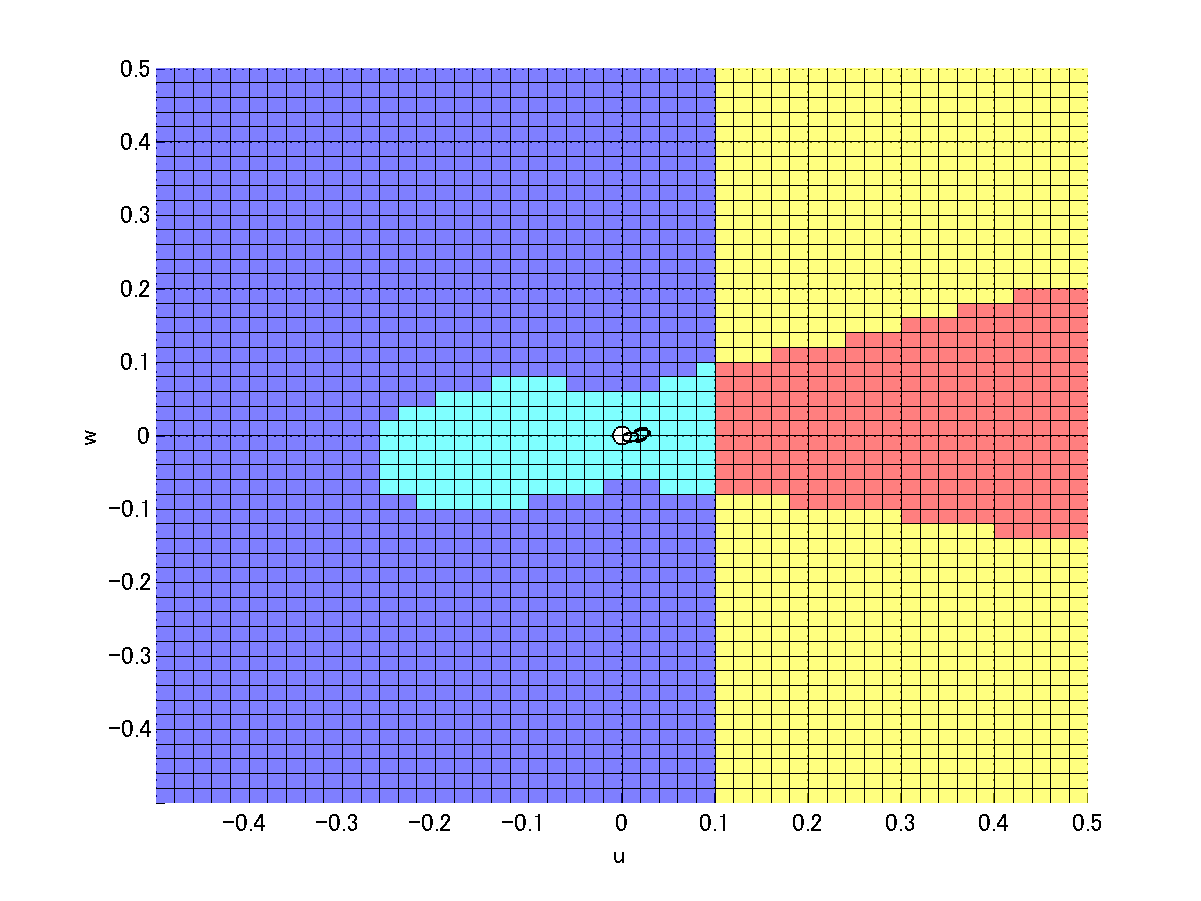}
(b)
\end{minipage}
\begin{minipage}{0.32\hsize}
\centering
\includegraphics[width=5.0cm]{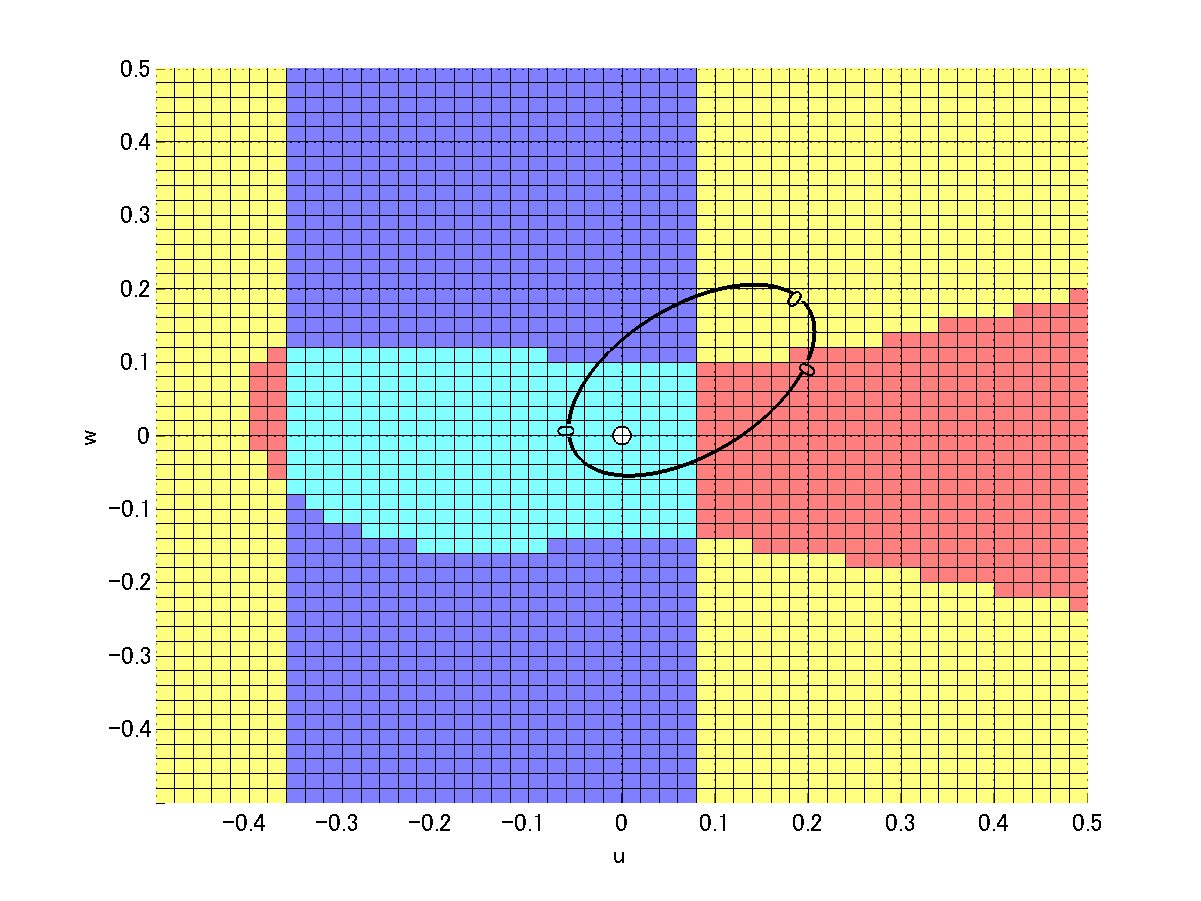}
(c)
\end{minipage}
\caption{Validation of ${\bf m}$-Lyapunov functions: 2.}
\label{fig-FNM2}
\begin{flushleft}
(a) shows $D_1\cap \{v=0.02\}$. Black curve represents the contour $\{L_1=0\}$.
\par
(b) shows $D_1\cap \{v=0.02\}$. Black curve near the origin represents the contour $\{L_{M^\ast}=0\}$, where $m_1 = 10, m_2 = 1$.
\par
(c) shows $D_1\cap \{v=0.02\}$. Black curve represents the contour $\{L_{M^\ast}=0\}$, where $m_1 = 1, m_2 = 10$.
\end{flushleft}
\end{figure}

\begin{figure}[htbp]\em
\begin{minipage}{0.32\hsize}
\centering
\includegraphics[width=5.0cm]{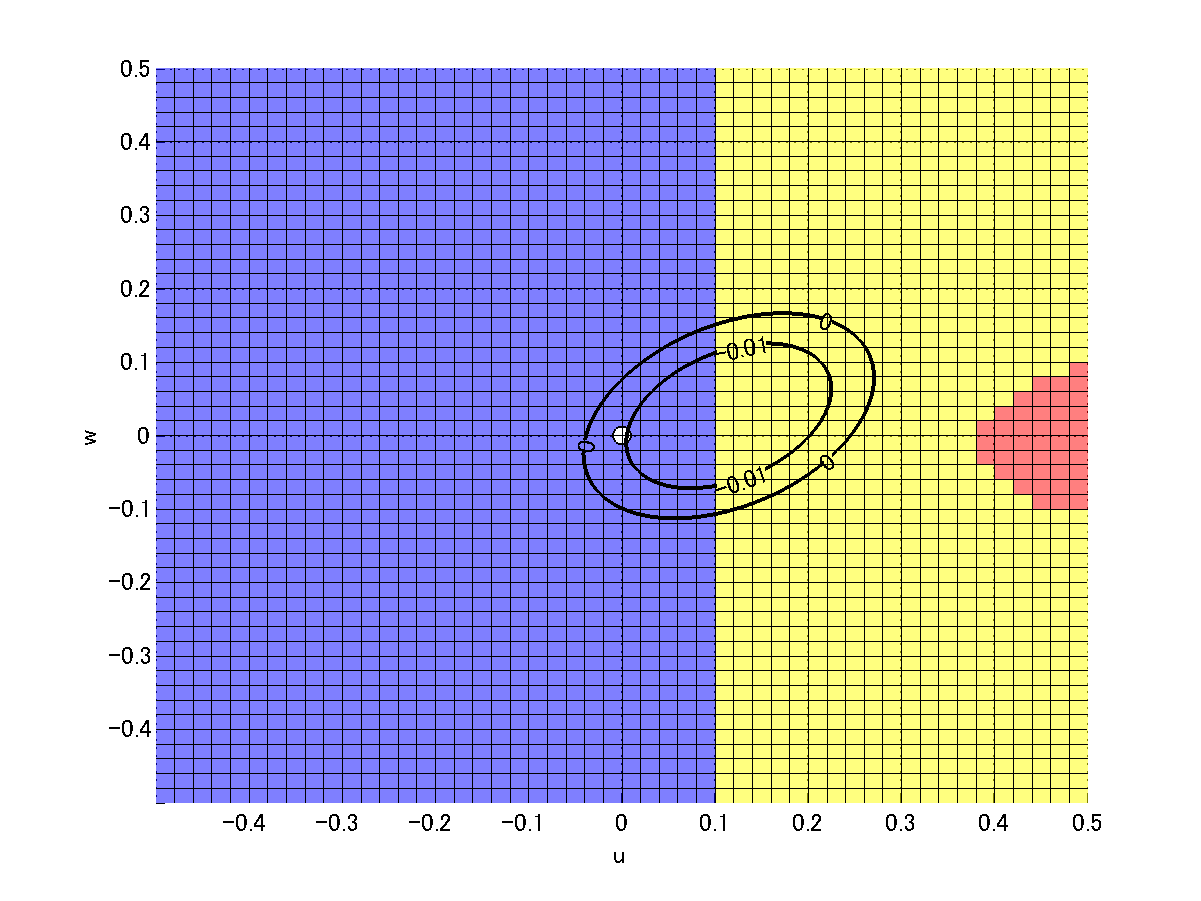}
(a)
\end{minipage}
\begin{minipage}{0.32\hsize}
\centering
\includegraphics[width=5.0cm]{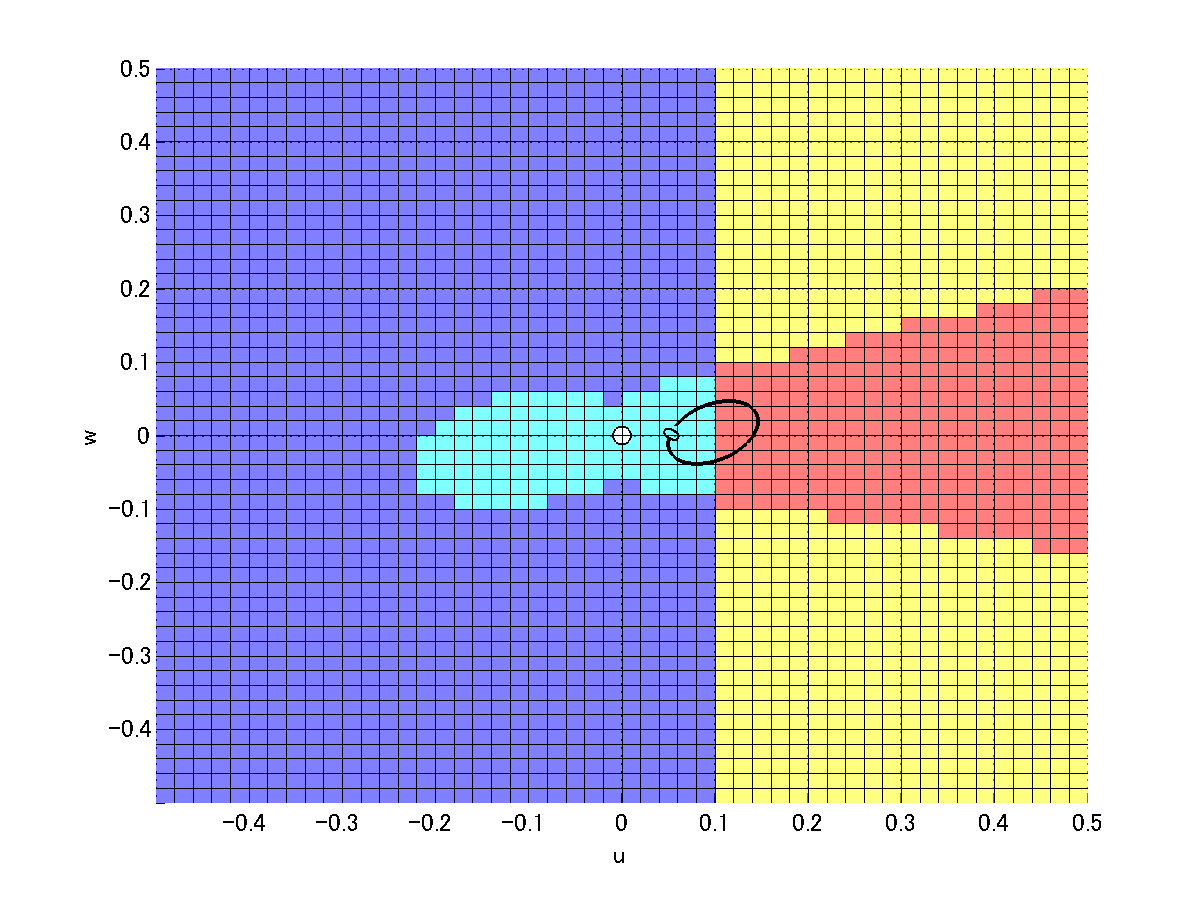}
(b)
\end{minipage}
\begin{minipage}{0.32\hsize}
\centering
\includegraphics[width=5.0cm]{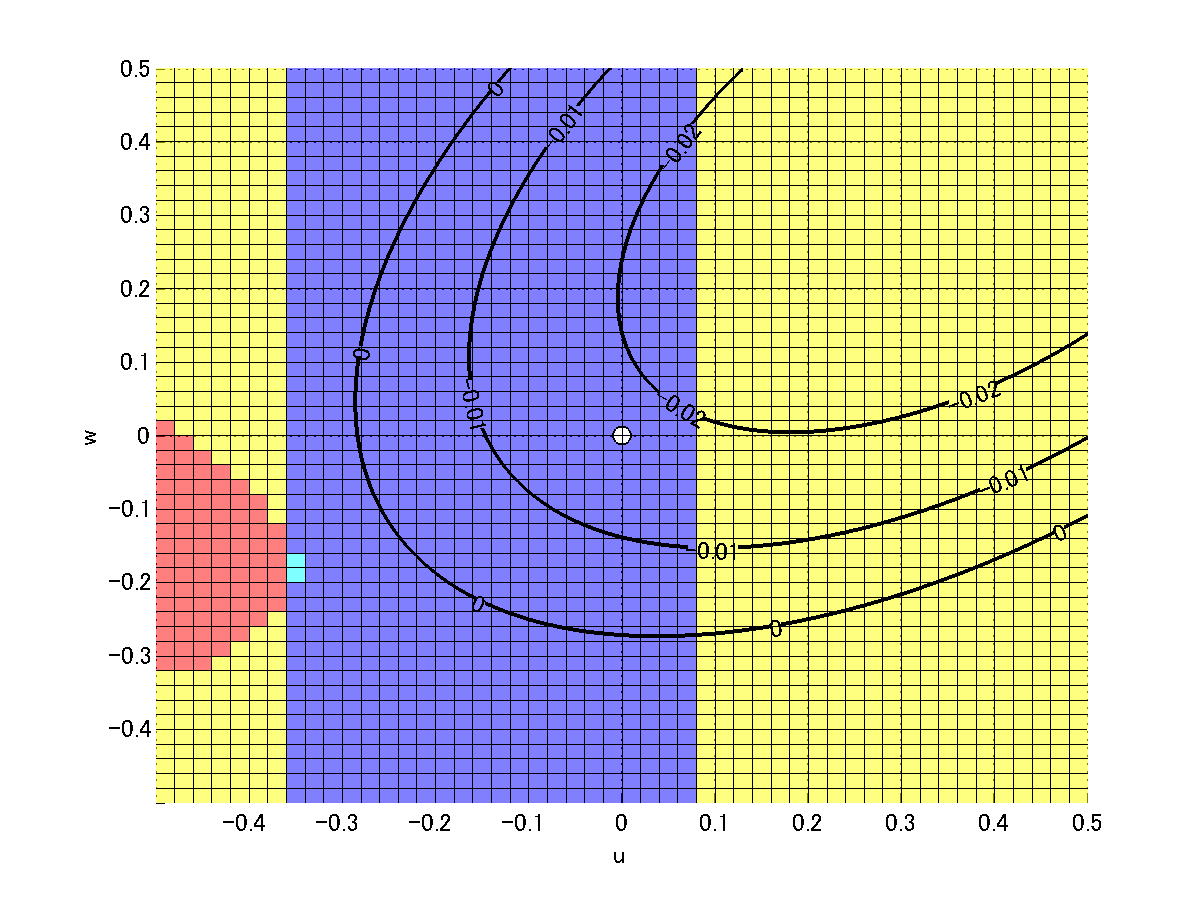}
(c)
\end{minipage}
\caption{Validation of ${\bf m}$-Lyapunov functions: 3.}
\label{fig-FNM3}
\begin{flushleft}
(a) shows $D_1\cap \{v=0.1\}$. Black curves represent contours $\{L_1=-0.01,0\}$.
\par
(b) shows $D_1\cap \{v=0.1\}$. Black curve represents the contour $\{L_{M^\ast}=0\}$, where $m_1 = 10, m_2 = 1$.
\par
(c) shows $D_1\cap \{v=0.1\}$. Black curves represent contours $\{L_{M^\ast}=-0.02, -0.01, 0\}$, where $m_1 = 1, m_2 = 10$.
\end{flushleft}
\end{figure}

\bigskip
Computation times for our verifications are about $11$ minutes for $50\times 50\times 50$ small uniform cubes, and about $14.5$ hours for $200\times 200\times 200$ smaller uniform cubes shown in Fig. \ref{fig-FNReVal}.

% New section	
\section{Numerical examples for Poincar\'{e} maps}
\label{section-example-discrete}

We move to validations of Lyapunov domains for discrete dynamical systems.
In particular, we deal with Poincar\'{e} maps for flows on Poincar\'{e} sections as our test problems.

% New subsection
\subsection{Remarks on verification of the negative definiteness of $B({\bf x})$}

In practical verification of the negative definiteness of $B({\bf x})$ in (\ref{matrix-discrete}), 
we should pay attention to differences from the case of flows.

Let $D_L$ be a star-shaped domain containing a fixed point ${\bf x}^\ast$ of a map $\psi$ with a decomposition $D_L = \bigcup_{k=1}^K D_k$.
For each point ${\bf x}\in D_k$, the path $\{{\bf x}^\ast + s({\bf x}-{\bf x}^\ast)\mid 0\leq s\leq 1\}$ is contained in $D_L= \bigcup_{k=1}^{K} D_k$.
By using the integral form of $A_I$, the matrix $B({\bf x})$ can be written by
\begin{align*}
B({\bf x}) &= A_I({\bf x})^T Y A_I({\bf x}) - Y\\
	&= \int_0^1 \{ D\psi ({\bf x}^\ast + s({\bf x}-{\bf x}^\ast))^T Y A_I({\bf x}) - Y\}ds\\
	&= \int_0^1ds \int_0^1ds' \{ D\psi ({\bf x}^\ast + s({\bf x}-{\bf x}^\ast))^T Y D\psi ({\bf x}^\ast + s'({\bf x}-{\bf x}^\ast)) - Y\}\\
	&= \int_0^1ds \int_0^1ds'  \tilde B({\bf x};s,s').
\end{align*}
These equalities indicate that the negative definiteness of $B({\bf x})$ is reduced to that of the matrix $\tilde B({\bf x};s,s')$ for all $s,s'\in [0,1]$, as in the case of flows.
Since both segments $\{{\bf x}^\ast + s({\bf x}-{\bf x}^\ast)\}$ and $\{{\bf x}^\ast + s'({\bf x}-{\bf x}^\ast)\}$ are contained in $D_L$, then we know that the matrix $\tilde B({\bf x};s,s')$ is contained in the following set of matrices for any ${\bf x}\in D_k$ for all $k=1,\cdots, K$:
\begin{equation*}
\{\tilde B({\bf z}, {\bf z}') = D\psi ({\bf z})^T Y D\psi ({\bf z}') - Y \mid {\bf z}, {\bf z}'\in D_L\}.
\end{equation*}
Obviously any points ${\bf z}, {\bf z}'$ are contained in $D_k$ and $D_{k'}$ for some $k,k'\in \{1,\cdots, K\}$, respectively, while they are different in general.
We obtain the following sufficient condition of the negative definiteness of $B({\bf x})$ for all $x\in D_L$ making use of the decomposition $D_L= \bigcup_{k=1}^{K} D_k$.

\begin{lemma}
\label{lem-neg-def-map1}
Let $D_L$ be a star-shaped domain containing a fixed point ${\bf x}^\ast$ of a map $\psi$ with a decomposition $D_L = \bigcup_{k=1}^K D_k$.
Let $\tilde B({\bf z}, {\bf z}') = D\psi ({\bf z})^T Y D\psi ({\bf z}') - Y$ for ${\bf z}, {\bf z}'\in D_L$.
Enclose each subdomain $D_k$ by an interval vector $[D_k]$.
Assume that all matrices of the matrix set 
\begin{equation*}
\tilde B([D_k], [D_{k'}]) = \{\tilde B({\bf z}, {\bf z}') \mid {\bf z}\in [D_k], {\bf z}'\in [D_{k'}]\}
\end{equation*}
are negative definite for all $k,k'\in \{1,\cdots, K\}$. 
Then $B({\bf x})$ is negative definite for all ${\bf x}\in D_L$.
\end{lemma}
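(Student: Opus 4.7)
The plan is to exploit the double-integral representation
\[
B(\mathbf{x}) = \int_0^1\!\!\int_0^1 \tilde B(\mathbf{x};s,s')\,ds\,ds'
\]
already derived in the discussion preceding the lemma, and to reduce the negative definiteness of $B(\mathbf{x})$ to strict negativity of the integrand pointwise. First I would fix an arbitrary $\mathbf{x}\in D_L$ and an arbitrary nonzero $\mathbf{v}\in\mathbb{R}^n$, and consider
\[
\mathbf{v}^T B(\mathbf{x})\mathbf{v} = \int_0^1\!\!\int_0^1 \mathbf{v}^T\tilde B(\mathbf{x};s,s')\mathbf{v}\,ds\,ds',
\]
the goal being to show the right-hand side is strictly negative.

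Next I would use the star-shape of $D_L$ with respect to $\mathbf{x}^\ast$: for every $(s,s')\in[0,1]^2$ the points $\mathbf{z}=\mathbf{x}^\ast+s(\mathbf{x}-\mathbf{x}^\ast)$ and $\mathbf{z}'=\mathbf{x}^\ast+s'(\mathbf{x}-\mathbf{x}^\ast)$ both lie in $D_L$, hence in some subdomains $D_k$ and $D_{k'}$ of the given decomposition. Therefore $\tilde B(\mathbf{x};s,s')=\tilde B(\mathbf{z},\mathbf{z}')$ belongs to the interval matrix set $\tilde B([D_k],[D_{k'}])$, and the hypothesis delivers $\mathbf{v}^T\tilde B(\mathbf{x};s,s')\mathbf{v}<0$. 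Here I would adopt the convention that \emph{negative definiteness} of the possibly non-symmetric matrix $\tilde B(\mathbf{z},\mathbf{z}')$ means strict negativity of the real quadratic form $\mathbf{v}\mapsto\mathbf{v}^T\tilde B(\mathbf{z},\mathbf{z}')\mathbf{v}$, which is the property one actually checks (via symmetrization) in interval arithmetic.

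With this pointwise strict negativity in hand, I would finish by a continuity-and-compactness argument: the map $(s,s')\mapsto \mathbf{v}^T\tilde B(\mathbf{x};s,s')\mathbf{v}$ is continuous on $[0,1]^2$ since $D\psi$ is continuous, and strictly negative on this compact square, hence bounded above by some $-c_{\mathbf{x},\mathbf{v}}<0$. Integrating yields $\mathbf{v}^T B(\mathbf{x})\mathbf{v}\le -c_{\mathbf{x},\mathbf{v}}<0$. Since $\mathbf{x}\in D_L$ and $\mathbf{v}\neq\mathbf{0}$ were arbitrary, and $B(\mathbf{x})$ itself is real symmetric (as $A_I(\mathbf{x})^T Y A_I(\mathbf{x})-Y$ is symmetric whenever $Y$ is), this proves strict negative definiteness of $B(\mathbf{x})$ throughout $D_L$, parallel to the proof of Corollary~\ref{cor-subdivision-cont} in the flow case.

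The main obstacle is bookkeeping rather than any real analytic content: one must be careful that the pair $(\mathbf{z},\mathbf{z}')$ can lie in \emph{different} subdomains, so all $K^2$ cross-interval blocks $\tilde B([D_k],[D_{k'}])$ are genuinely required and not merely the diagonal ones with $k=k'$. This is the structural novelty compared with the flow version, where only one integration variable appears. Once this point is acknowledged, the proof reduces to the integration of a uniformly strictly negative continuous function.
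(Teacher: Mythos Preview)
Your proposal is correct and follows essentially the same approach as the paper: the paper's argument is precisely the discussion immediately preceding the lemma, which writes $B({\bf x})$ as the double integral $\int_0^1\int_0^1 \tilde B({\bf x};s,s')\,ds\,ds'$, observes that the two segment points $\{{\bf x}^\ast+s({\bf x}-{\bf x}^\ast)\}$ and $\{{\bf x}^\ast+s'({\bf x}-{\bf x}^\ast)\}$ each lie in some $D_k$, $D_{k'}$ (possibly different), and then states the lemma as the resulting sufficient condition. Your version is slightly more explicit in spelling out the quadratic-form reduction and the continuity--compactness step for strict negativity of the integral, and your closing remark about needing all $K^2$ cross blocks rather than only the diagonal ones is exactly the point the paper emphasizes as the structural difference from Corollary~\ref{cor-subdivision-cont}.
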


Using the identity
\begin{equation*}
({\bf x}-{\bf x}^\ast)^T C({\bf x}-{\bf x}^\ast) = ({\bf x}-{\bf x}^\ast)^T C^T ({\bf x}-{\bf x}^\ast) = ({\bf x}-{\bf x}^\ast)^T \frac{(C^T + C)}{2}({\bf x}-{\bf x}^\ast)
\end{equation*}
for any $n$-squared matrix $C$, we know that the verification of Lemma \ref{lem-neg-def-map1} is replaced by the negative definiteness of the symmetrized matrix set
\begin{equation}
\label{sym-matrix-B}
\check B([D_k], [D_{k'}]) = \left\{\frac{\tilde B({\bf z}, {\bf z}')^T + \tilde B({\bf z}, {\bf z}')}{2} \mid {\bf z}\in [D_k], {\bf z}'\in [D_{k'}]\right\},\quad k, k' \in \{1,\cdots, K\}.
\end{equation}
\bigskip
Taking these observations into account, we propose the following algorithm with the decomposition $D_L = \bigcup_{k=1}^K D_k$.
\begin{algorithm}
\label{alg-disc}
Let $D_L$ be a star-shaped domain containing a fixed point ${\bf x}^\ast$ of $\psi$ with a decomposition $D_L = \bigcup_{k=1}^K D_k$.
Let $[D_k]$ be the interval hull of $D_k$ (namely, the smallest interval set containing $D_k$).
\begin{enumerate}
\item For each $k, k'\in \{1,\cdots, K\}$, compute the interval matrix $\check B(k,k') := \check B([D_k], [D_{k'}])$, according to (\ref{sym-matrix-B}).
\item Compute the negative definiteness of the interval matrix $\check B(k,k')$ for all possible choices of $k, k'\in \{1,\cdots, K\}$ with, say, the Gershgorin Circle Theorem in Algorithm \ref{alg-cont}.
\end{enumerate}
If the above verifications pass, return \lq\lq succeeded". If not, return \lq\lq failed".
\end{algorithm}
Negative definiteness of all $\check B(k,k')$ yields the negative definiteness of $B({\bf x})$ for each ${\bf x}\in D_L$.
Consequently, we obtain the following corollary.
\begin{corollary}
\label{cor-subdivision-disc}
Let $D_L$ be a star-shaped domain containing a fixed point ${\bf x}^\ast$ of $\psi$.
Assume that $D_L$ admits a decomposition $D_L = \bigcup_{k=1}^{K} D_k$ into subdomains.
We also assume that there is a symmetric matrix $Y$ such that Algorithm \ref{alg-disc} returns \lq\lq succeeded".
Then the functional $L({\bf x}) = ({\bf x}-{\bf x}^\ast)^T Y ({\bf x}-{\bf x}^\ast)$ is a Lyapunov function on $D_L$.
\end{corollary}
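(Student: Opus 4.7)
The plan is to chain together the results already established: the algorithm's output about the interval matrices $\check B(k,k')$ controls the genuine matrix $B({\bf x})$ via the integral representation introduced before Lemma~\ref{lem-neg-def-map1}, and then Theorem~\ref{thm-Lyapunov-map} converts negative definiteness of $B$ into the Lyapunov property of $L$.

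First, I would unpack what Algorithm~\ref{alg-disc} returning \lq\lq succeeded" provides: for every pair $(k,k')\in\{1,\ldots,K\}^2$, each matrix in the interval enclosure $\check B(k,k')$ is strictly negative definite. Since for any real vector ${\bf v}$ and any $n\times n$ matrix $C$ we have ${\bf v}^T C {\bf v}={\bf v}^T\frac{C^T+C}{2}{\bf v}$, the negative definiteness as a quadratic form on $\mathbb{R}^n$ passes unchanged from $\check B(k,k')$ back to $\tilde B({\bf z},{\bf z}')=D\psi({\bf z})^T Y D\psi({\bf z}')-Y$ for every ${\bf z}\in[D_k]$ and ${\bf z}'\in[D_{k'}]$.

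Next, I would invoke the double-integral expansion recorded just before Lemma~\ref{lem-neg-def-map1}:
\begin{equation*}
B({\bf x}) = \int_0^1\!\!\int_0^1 \tilde B({\bf x};s,s')\,ds\,ds',\qquad \tilde B({\bf x};s,s')=D\psi({\bf z}_s)^T Y D\psi({\bf z}_{s'})-Y,
\end{equation*}
where ${\bf z}_s={\bf x}^\ast+s({\bf x}-{\bf x}^\ast)$. For any fixed ${\bf x}\in D_L$, star-shapedness ensures that each of ${\bf z}_s$ and ${\bf z}_{s'}$ lies in $D_L$ and therefore in some $[D_k]$ and $[D_{k'}]$ respectively. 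Hence the integrand $\tilde B({\bf x};s,s')$ belongs to one of the enclosures $\tilde B([D_k],[D_{k'}])$, and by the previous step its symmetric part is strictly negative definite. Integrating a family of quadratic forms that are uniformly strictly negative definite preserves strict negative definiteness, so ${\bf v}^T B({\bf x}){\bf v}<0$ for every nonzero ${\bf v}\in\mathbb{R}^n$ and every ${\bf x}\in D_L$; this is precisely the conclusion of Lemma~\ref{lem-neg-def-map1}.

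With $B({\bf x})$ strictly negative definite on all of $D_L$, the hypothesis of Theorem~\ref{thm-Lyapunov-map} is verified, and the theorem immediately gives that $L({\bf x})=({\bf x}-{\bf x}^\ast)^T Y({\bf x}-{\bf x}^\ast)$ is a Lyapunov function on $D_L$ in the sense of Definition~\ref{dfn-Lyapunov-map}, completing the proof. The only mildly delicate point worth stating carefully is the passage from negative definiteness of the symmetrized interval matrix $\check B(k,k')$ back to negative definiteness of the quadratic form associated with the non-symmetric $\tilde B({\bf z},{\bf z}')$; everything else is a straightforward chaining of the preceding lemma and theorem.
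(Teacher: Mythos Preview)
Your proof is correct and follows essentially the same route as the paper: the paper simply remarks that negative definiteness of all $\check B(k,k')$ yields negative definiteness of $B({\bf x})$ on $D_L$ (this is Lemma~\ref{lem-neg-def-map1} together with the symmetrization identity recorded just before it), and then the corollary is an immediate consequence of Theorem~\ref{thm-Lyapunov-map}. You have spelled out the chain of implications in more detail, including the passage from $\check B$ back to $\tilde B$ via the quadratic-form identity and the integration step, but the underlying argument is identical.
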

Note that we have to verify the negative definiteness of $\check B(k,k')$ for all choices of the pair $(k,k')$ in $\{1,\cdots, K\}$, unlike the case of flows (Corollary \ref{cor-subdivision-cont}).
This is because two $A_I({\bf x})$'s are contained in a single term of $B({\bf x})$ and we have to consider integral forms with two individual parameters.

\bigskip
Now we discuss another trap for validating the negative definiteness of $B({\bf x})$. 
Since $A_I({\bf x})$ has the integral form $A_I({\bf x}) = \int_0^1 D\psi({\bf x}^\ast + s({\bf x} - {\bf x}^\ast))ds$, one can think of the mean value form of $A_I$ with interval arithmetics. 
In such a case, we obtain
\begin{equation*}
A_I({\bf x}) \in \int_0^1 D\psi \left( \bigcup_{k=1}^K [D_k]\right)ds,\quad {\bf x}\in D_L.
\end{equation*}
By using the mean value form, we obtain
\begin{equation}
\label{correct-1}
A_I({\bf x}) \in D\psi \left( \bigcup_{k=1}^K [D_k]\right).
\end{equation}
Notice that the inclusion (\ref{correct-1}) does just mean
\begin{equation}
\label{correct-2}
(A_I({\bf x}))_{i,j} \in \bigcup_{k=1}^K \{ (D\psi ({\bf z}))_{i,j}\mid {\bf z}\in [D_k]\},\quad i,j\in \{1,\cdots, n\}
\end{equation}
and {\em does not mean}
\begin{equation}
\label{wrong}
A_I({\bf x}) \in \bigcup_{k=1}^K \{ D\psi ({\bf z})\mid {\bf z}\in [D_k]\}.
\end{equation}
The trap is explained as follows.
Note that $A_I({\bf x})$ is a matrix valued function and $\int_0^1 D\psi \left( \bigcup_{k=1}^K [D_k]\right)ds$ is an interval matrix whose entries have integral forms.
In this case, the mean value form (\ref{correct-1}) is applied {\em to each entry, not to the whole matrix in general}.
Therefore, the verification making use of the wrong inclusion (\ref{wrong}) may return wrong results.
If we apply the mean value form of $A_I$ with the decomposition $D_L = \bigcup_{k=1}^{K} D_k$ and interval arithmetics, we can just apply the inclusion (\ref{correct-2}) for negative definiteness of $B({\bf x})$, which may cause the combinatorial explosion for computations.
Although Algorithm \ref{alg-disc} also makes use of the integral form of $A_I$, we should be very careful of estimation criteria for obtaining rigorous enclosures of matrices.
%% footnote
\footnote{If (\ref{wrong}) was correct, it would follow from (\ref{wrong}) that
\begin{equation*}
A_I({\bf x}) = D\psi ({\bf z}')\quad \text{ for some } {\bf z}\in [D_k]\text{ with some }k,
\end{equation*}
which would indicate that the negative definiteness of $B({\bf x})$ would hold if the matrix $D\psi ({\bf z}')^T Y D\psi ({\bf z}')-Y$ was negative definite for all ${\bf z}'\in D_L$.
This would provide an effective procedure for verifying the negative definiteness {\em if (\ref{correct-1}) implied (\ref{wrong})}, which is not the case.
}
%% footnote end

\par
\bigskip
A procedure such as Algorithm \ref{alg-disc} works to prove the existence of Lyapunov functions as long as the above algorithm passes successfully, even if domains does not contain fixed points.
An immediate benefit of Algorithm \ref{alg-disc} as well as Corollary \ref{cor-subdivision-disc} is that we can use better enclosure of matrices associated with $B$ in (\ref{matrix-discrete}).
In general, however, Algorithm \ref{alg-disc} is not an effective method in practical computations if the problem concerns with higher dimensional dynamical systems or the number of decompositions $K$ is large, which cause the combinatorial explosion of computations.
Nevertheless, if both the dimension of the problem and $K$ are small, Algorithm \ref{alg-disc} is available in reasonable computation processes.

\subsection{R\"{o}ssler system}
The first example is the R\"{o}ssler system: 
\begin{align}
\notag
\frac{du}{dt} &= -v-w,\\
\label{rossler}
\frac{dv}{dt} &= -u-av,\\
\notag
\frac{dw}{dt} &= b-w(c-u),
\end{align}

We consider the following parameters:
\begin{equation*}
a = 0.2,\quad b = 0.2,\quad c = 2.2,
\end{equation*}
in which case (\ref{rossler}) possesses an asymptotically stable periodic orbit.

Let the Poincar\'{e} section $\Gamma$ and its unit normal vector $n_\Gamma$
%% footnote
\footnote{
The setting of $n_\Gamma$ provide us with a simple coordinate on Poincar\'{e} sections.
In general, Poincar\'{e} sections for $n$-dimensional dynamical systems yield an $(n-1)$-dimensional coordinate, which is called an {\em section coordinate} (e.g. \cite{ZLoh}).
Since $n_\Gamma$ below is along an coordinate axis in the original coordinate on $\mathbb{R}^3$, the section coordinate is just the choice of two entries. 
In particular, our verifications are reduced to two dimensional dynamical systems.
For general $n_\Gamma$, we can choose an appropriate section coordinate after an affine transformation.
}
%% footnote
 be
\begin{align*}
\Gamma &= \{(u,v,w)\in \mathbb{R}^3 \mid v = -0.039538545829724\},\\
n_\Gamma &= (0,-1,0)^T.
\end{align*}
Then intervals containing the intersection point of a periodic orbit of (\ref{rossler}) with $\Gamma$ as well as the rigorous period can be validated as follows (\cite{HY}): 
\begin{align*}
\left[{\bf x}^\ast \right] &= \left(\begin{array}{c}
\left[-3.33960829479577,  -3.33960817367770\right] \\
\left[-0.03955770987867,  -0.03955770858575\right]\\
\left[ 0.03932476640565,   0.03932477007120\right]
\end{array}\right)\\
\left[T^\ast\right] &= \left[5.72694905401293,   5.72694917018763\right].
\end{align*}
Eigenvalue enclosures of the linearized matrix $DP([{\bf x}^\ast])$ of the Poincar\'{e} map $P:\Gamma\to \Gamma$ at the intersection are
\begin{align*}
[\lambda_1] &= \left[-0.55389294656450,  -0.53463411180113\right], \\
[\lambda_2] &= \left[-0.00045527197787,   0.00037278836683\right],
\end{align*}
which yield that the corresponding periodic orbit is asymptotically stable.
Note that, for computations of the Poincar\'{e} maps, we applied the $C^1$-Lohner algorithm \cite{ZLoh} with various time steps $N_t$.
The order of Taylor expansion computing the residual term is set $p=5$.
In our case, we compute the first arrival map for the Poincar\'{e} section $\Gamma$ and hence the adjustment of $N_t$ corresponds to that of the size of one time step $\Delta t$.

The symmetric matrix $Y$ computed at the center of $[{\bf x}^\ast]$ is computed as follows:
\begin{align*}
Y&=\left(\begin{array}{cc} 
  1.007925375406352  &  -0.208072133943174 \\
  -0.208072133943174 &  1.077435243689143 
\end{array}\right)
\end{align*}

Firstly, we verify the strict negative definiteness of the matrix $\check B({\bf z},{\bf z}')$ in (\ref{matrix-discrete}) around a fixed point of $P$ following Algorithm \ref{alg-disc}.
For the computation of $P$, we set $N_t = 2000$ and let $[X]$ be the interval vector on $\Gamma$ given by
\begin{align*}
\left[X\right] = \left(\begin{array}{c}
\left[-3.43960823423674,  -3.23960823423673\right] \\
\left[-0.06067523176158,   0.13932476823843\right]
\end{array}\right),
\end{align*}
which actually contains $[{\bf x}^\ast]$.
We then divide $[X]$ into $10\times 10$ small uniform squares.
Through computations following Algorithm \ref{alg-disc}, we have confirmed the strict negative definiteness, which is shown as light blue regions in Fig. \ref{fig-Rossler2000}.

\bigskip
Secondly, we validate Lyapunov domains away from the fixed point.
According to Stage 2, we verify the sign of $L(P([{\bf x}]))-L([{\bf x}])$, in which case the operation returns \lq\lq succeeded" if $L(P([{\bf x}]))-L([{\bf x}])$ is negative.
If we cannot confirm that $L(P({\bf x}))-L({\bf x}) < 0$ via interval arithmetics, then the operation returns \lq\lq failed".
Note that the result \lq\lq failed" does not mean that $L(P([{\bf x}]))-L([{\bf x}]) \geq 0$ but actually means that the resulting interval $L(P([{\bf x}]))-L([{\bf x}])$ contains $0$.

Fig. \ref{fig-Rossler2000} shows the succeeded regions colored by blue, light-blue and the failed regions colored by red with $N_t=2000$.

\begin{figure}[htbp]\em
\begin{minipage}{1\hsize}
\centering
\includegraphics[width=6.0cm]{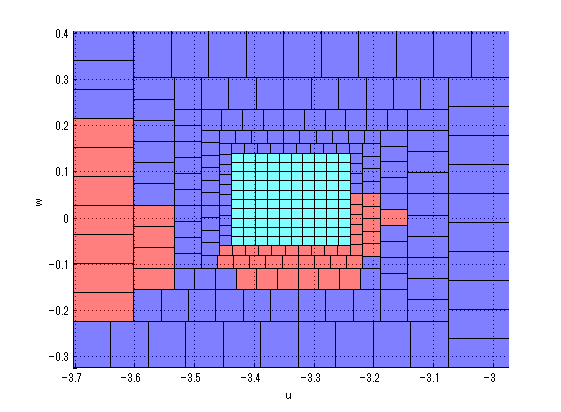}
\end{minipage}
\caption{Lyapunov domain for (\ref{rossler}) validated with $2000$ time steps.}
\label{fig-Rossler2000}
\end{figure}

\bigskip
Thirdly, we validate Lyapunov domains away from the fixed point with $N_t = 4000$.
Validation results is shown in Fig. \ref{fig-Rossler4000}.
Compared with Fig. \ref{fig-Rossler4000}, the blue region is enlarged, which implies that the accuracy of ODE computations deeply relates to constructions of Lyapunov regions.

\begin{figure}[htbp]\em
\begin{minipage}{0.4\hsize}
\centering
\includegraphics[width=6.0cm]{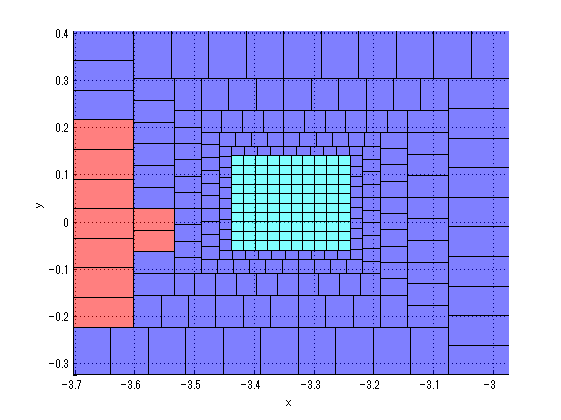}
\end{minipage}
\begin{minipage}{0.4\hsize}
\centering
\includegraphics[width=6.0cm]{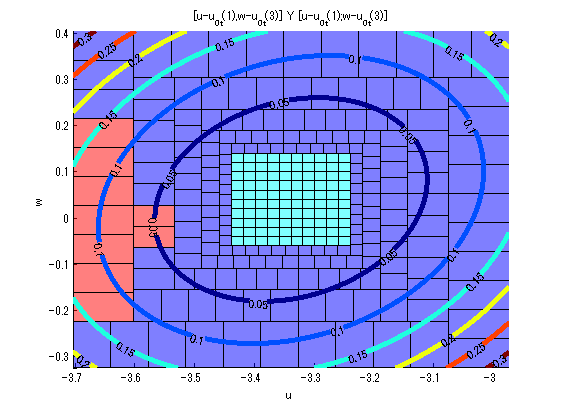}
\end{minipage}
\caption{Lyapunov domain for (\ref{rossler}) validated with $4000$ time steps and contours of the Lyapunov function.}
\label{fig-Rossler4000}
\end{figure}

\bigskip	
Nevertheless, there are several regions which our validations return failed.
To consider the origin of the failure of our validations, we numerically computed eigenvectors of $DP({\bf x}^\ast)$ at the center ${\bf x}^\ast \in [{\bf x}^\ast]$.
The result is shown in Fig. \ref{fig-RosslerEvec}.
$DP({\bf x}^\ast)$ has two eigendirections, one of which is the vector $r_1$ associated with $\lambda_1 = -0.000040978780281$ (green), and the other is  the vector $r_2$ associated with $\lambda_2 = -0.544259673645596$ (yellow).
Fig. \ref{fig-RosslerEvec} implies that the \lq\lq failed" region is clustered on the direction of $r_2$. 
Note that the modulus of $\lambda_2$ is closer to $1$ than that of $\lambda_1$, which implies that the strength of hyperbolicity 
relates to the difficulty for validating Lyapunov domains. 

\begin{figure}[htbp]\em
\begin{minipage}{1\hsize}
\centering
\includegraphics[width=6.0cm]{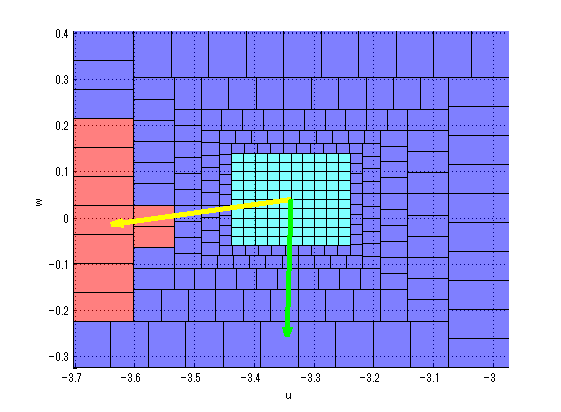}
\end{minipage}
\caption{Eigendirections of the Poincar\'{e} map for (\ref{rossler}).}
\label{fig-RosslerEvec}
\end{figure}

\bigskip
Computation times for our verifications are about $25$ hours in Stage $1$ with $C^1$-Lohner method, and about $4.8$ hours in Stage $2$ with the ordinary (i.e. $C^0$-)Lohner method.
Such a big difference of computation times is due to the difference of optimizations of programs.

%%%%%%%%%%%%%%%%%%%%%%%%%%%%%%%%%%%%%

\subsection{Lorenz system}
The second example is the Lorenz system: 
\begin{align}
\notag
\frac{du}{dt} &= -p(u+v),\\
\label{Lorenz}
\frac{dv}{dt} &= u(r-z)-v,\\
\notag
\frac{dw}{dt} &= uv-bw.
\end{align}

We set
\begin{equation*}
r = 28,\quad p = 6,\quad b = \frac{8}{3}.
\end{equation*}
The system (\ref{Lorenz}) with the above parameter values admits a periodic orbit (e.g. \cite{SV1980}), which can be validated with computer assistance that it is of saddle-type \cite{HY}.

The transformation by Sinai-Vul \cite{SV1980, SV1981} (cf. \cite{YYN}) yields the following transformed system and parameter values:
\begin{align}
\notag
\frac{du}{dt} &= a_1 u + b_1(u+v)w,\\
\label{Lorenz-trans}
\frac{dv}{dt} &= a_2 v - b_1(u+v)w,\\
\notag
\frac{dw}{dt} &= -a_3w + (u+v)(b_2 u + b_3 v),
\end{align}
\begin{equation}
\label{Lorenz-param-trans}
\begin{cases}
a_1=  9.700378782,\quad a_2=-16.700378782,\quad a_3=  2.666666667, &\\
b_1=-0.227266206,\quad b_2= 2.616729797,\quad b_3=-1.783396463. &
\end{cases}
\end{equation}
We regard (\ref{Lorenz-trans})-(\ref{Lorenz-param-trans}) as the center of our considerations. 

A Poincar\'{e} section $\Gamma$ as well as its unit normal vector $n_\Gamma$ in our verification are set as
\begin{equation*}
\Gamma = \{(u,v,w)\in \mathbb{R}^3 \mid w = 27\},\quad n_\Gamma = (0,0,1)^T.
\end{equation*}

Numerical validation of the Poincar\'{e} map $P$ based on discussions in Section \ref{section-valid-per} yields enclosures of the intersection point between the periodic orbit $\gamma$ and $\Gamma$ as well as the period of $\gamma$ as follows:

\begin{align*}
	\left[{\bf x}^\ast\right]&=\left(
		\begin{array}{c}
			\left[   3.50078722830696,   3.50078739732356\right] \\
			\left[   3.33033175959478,   3.33033178479988\right] \\
			\left[  27.00000000000000, 27.00000000000000\right]
		\end{array}\right)\\
	\left[T^\ast\right] &= \left[0.68991868010675,   0.68991868537750\right]
\end{align*}
Eigenvalues of the Jacobian matrix $DP([{\bf x}^\ast])$ of $P$ in $[{\bf x}^\ast]$ are enclosed by the following intervals:
\begin{align*}
\lambda_1 &= \left[ 1.03671803803776,   1.06505368292434\right]\\
\lambda_2 &= \left[-0.01460002975740,   0.01701745287359\right],
\end{align*}
which implies that the validated periodic orbit $\gamma$ is of saddle-type.

The symmetric matrix $Y$ computed at the center of $[{\bf x}^\ast]$ is computed as follows:
\begin{align*}
		Y&=\left(\begin{array}{cc} 
		-0.915485816680675 &  0.474862621686875\\
		 0.474862621686875  & 0.915485816680675
	\end{array}\right)
\end{align*}

We are then ready to validate the Lyapunov domain around $[{\bf x}^\ast]\cap \Gamma$.
Firstly, set a sample region containing $[{\bf x}^\ast]$ in $\Gamma$ as
\begin{align*}
\left[X\right] = \left(\begin{array}{c}
\left[  3.48078731281545,   3.52078731281546\right] \\
\left[  3.31033177219729,   3.35033177219730\right]
\end{array}\right)
\end{align*}
and divide $[X]$ into $4\times 4$ small uniform rectangular domains.
We then validate the strict negative definiteness of the matrix $B({\bf z})$ in (\ref{matrix-discrete}) for $P$ with $N_t = 4000$ in the whole region $[X]$ following Algorithm \ref{alg-disc}, which is shown in light blue regions in Fig. \ref{fig-Lorenz}.

\bigskip
Secondly, we extend the Lyapunov region outside $[X]$ by verifying if $L(P({\bf x})) - L({\bf x})$ becomes negative in each interval region $[{\bf x}]$ with $N_t = 2000$.
Fig. \ref{fig-Lorenz} around light blue region represents regions where our validation returns \lq\lq succeeded" (blue) and those our validation returns \lq\lq failed" (red).

In Fig. \ref{fig-LorenzEvec}, we can see correspondence between red regions and the arrow colored yellow denoting the eigenvector of $DP([{\bf x}^\ast])$ associated with the eigenvalue $\lambda_1 = -1.050919958691306$.
Note that the green arrow in Fig. \ref{fig-LorenzEvec} denotes the eigenvector of $DP([{\bf x}^\ast])$ associated with the eigenvalue $\lambda_2 = -0.001207890459913$.

We observe that, as in the case of the R\"{o}ssler system, validations of Lyapunov domains become hard in eigendirections associated with eigenvalues whose moduli are close to $1$.
In other words, the strength of hyperbolicity corresponds to the difficulty of validations of Lyapunov domains.

\begin{figure}[htbp]\em
\begin{minipage}{0.4\hsize}
\centering
\includegraphics[width=6.0cm]{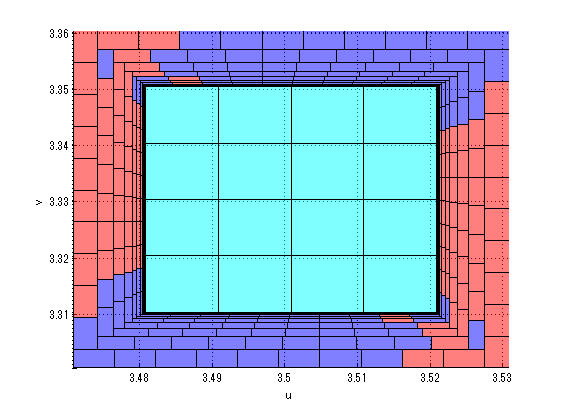}
\end{minipage}
\begin{minipage}{0.4\hsize}
\centering
\includegraphics[width=6.0cm]{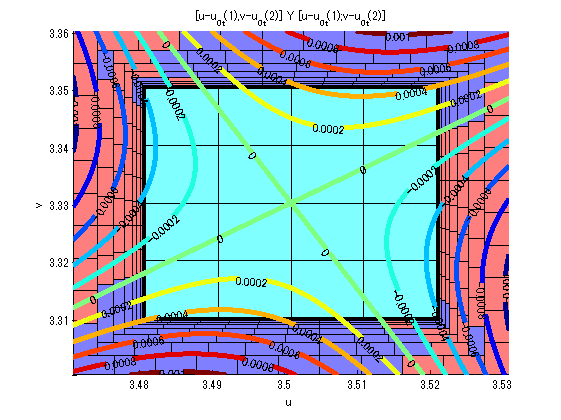}
\end{minipage}
\caption{Validated Lyapunov domain for (\ref{Lorenz-trans}) around $[{\bf x}^\ast]$ in $\Gamma$ and contours of $L$.}
\label{fig-Lorenz}
\end{figure}

\begin{figure}[htbp]\em
\begin{minipage}{1\hsize}
\centering
\includegraphics[width=6.0cm]{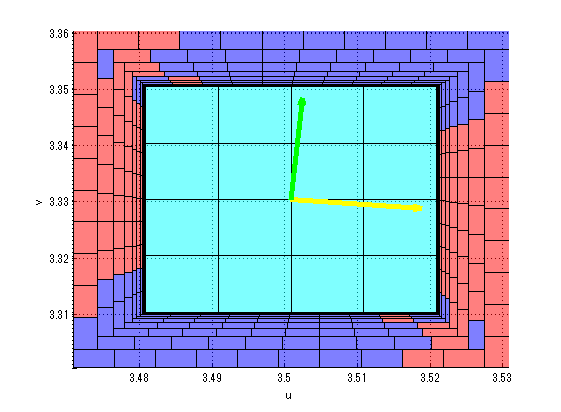}
\end{minipage}
\caption{Eigendirections of the Poincar\'{e} map for (\ref{Lorenz-trans}) and contours of the Lyapunov function.}
\label{fig-LorenzEvec}
\end{figure}

Next, we consider a ${\bf m}$-Lyapunov function around the saddle fixed point.
Let ${\bf m} = (m_1,m_2) = \{1, 1/4\}$, which yields
\begin{equation*}
M^\ast = \begin{pmatrix}
-1 & 0\\
0 & \frac{1}{4}
\end{pmatrix},
\end{equation*}
and contours of the ${\bf m}$-Lyapunov function $L_{M^\ast}$ is shown in Fig \ref{fig-LorenzContour} as well as the Lyapunov domain.

\begin{figure}[htbp]\em
\begin{minipage}{1\hsize}
\centering
\includegraphics[width=6.0cm]{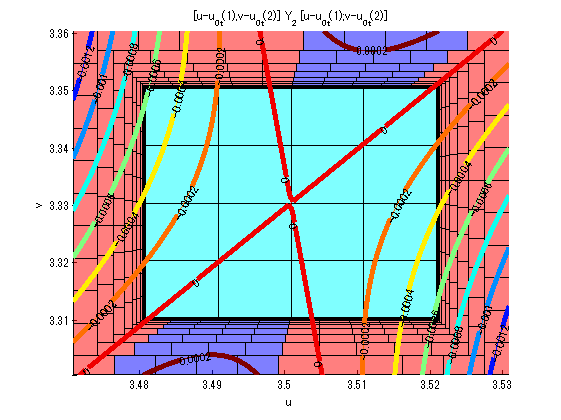}
\end{minipage}
\caption{Contours of $L_{M^\ast}$ for (\ref{Lorenz-trans}).}
\label{fig-LorenzContour}
\end{figure}

We observe that we cannot validate the strict negative definiteness of $B_{M^\ast}({\bf w})$ anywhere around the fixed point if $m_1 < 1$.
Similarly, if $m_2 < 1/4$, the validated domain where $B_{M^\ast}({\bf w})$ is strictly negative definite becomes smaller.
The Lyapunov domain of $L_{M^\ast}$ is smaller than that of $L$ and it is clustered on the direction of stable manifolds. 
Indeed, the blue region in Fig. \ref{fig-LorenzContour} is concentrated on $L^{-1}(0,\infty)$.
This observation is due to the small modulus of $\lambda_1$ (eigenvalue associating the stable direction).

\bigskip
Computation times for our verifications are about $43$ hours in Stage $1$ with $C^1$-Lohner method, and about $19$ hours in Stage $2$ with the ordinary (i.e. $C^0$-)Lohner method. Finally, verification of $L_{M^\ast}$ takes about $2$ seconds.

\section*{Conclusion}
We have discussed a systematic procedure of Lyapunov functions with explicit domains of definition, called Lyapunov domains, with computer assistance.

Computer assisted analysis such as interval arithmetics enables us to construct Lyapunov functions around fixed points with explicit ranges.
Our Lyapunov functions are quadratic and hence we can easily describe local dynamics in Lyapunov domains, such as level sets of Lyapunov functions and enclosures of the stable and the unstable manifolds of fixed points.

\bigskip
In our procedure, there are two stages for constructing Lyapunov domains for flows for a given quadratic structure:
\bigskip
\begin{itemize}
\item {\bf Stage 1: Strict negative definiteness of the associated matrix $A({\bf z})$}. 
\end{itemize}
This verification around equilibria yields not only validation of Lyapunov domains but also local uniqueness of equilibria.
We have proved that all hyperbolic equilibria locally admit Lyapunov functions of the form (\ref{Lyapunov-flow}). 
In particular, the negative definiteness of $A$ reflects hyperbolicity of the Jacobian matrix at a hyperbolic equilibrium ${\bf x}^\ast$.
We have also observe that the negative definiteness is intrinsically the same as cone conditions discussed in e.g. \cite{ZCov}.
Additionally, we have discussed another validation procedure of Lyapunov functions as well as hyperbolicity of equilibria for flows in the preceding work \cite{Mat}. 
It turns out that the condition discussed in \cite{Mat} is stronger than the negative definiteness of $A({\bf z})$.

\begin{itemize}
\item {\bf Stage 2: Direct calculations of $(dL/dt) < 0$ along solution orbits}.
\end{itemize}
This verification stage is effective away from equilibria.
Even if Stage 1 is failed, this stage gives us further extension of Lyapunov domains of a given quadratic function.
Combining validations in these two stages, Lyapunov domains can be extended away from equilibria.
\par
\bigskip
We also have derived a validation procedure of Lyapunov functions for discrete dynamical systems.
A sufficient condition for validating Lyapunov functions, strict negative definiteness of the associated matrix $B({\bf z})$, guarantees the existence of Lyapunov functions and local uniqueness of fixed points in given domains. 
We mentioned that this sufficient condition is equivalent to that of cone conditions for maps \cite{ZCov}.
\par
As demonstrations of applicability of our procedures, we have shown several validation examples of Lyapunov functions with computer assistance.
Adjustment of the number of division of domains or time steps of solvers, and choice of verification stage have potentials to extend Lyapunov domains not only around fixed points but also away from them.
We also observe that validation of Lyapunov domains becomes difficult in eigendirections whose associated eigenvalues have real parts close to zero (for flows) or moduli close to one (for maps).
These observations will give us several guiding principles for studying asymptotic behavior around fixed points in terms of Lyapunov functions or cones with computer assistance.

\bigskip
The effectiveness of cones with computer assistance are already seen in various fields (e.g. \cite{C, KWZ2007, Mat2}).
As mentioned above, criteria in Stage 1 are equivalent to a sufficient condition of cone conditions.
Additionally, Lyapunov domains admit re-parameterization of trajectories in terms of values of Lyapunov functions, called Lyapunov tracing.
This technique indicates that the parameter determining solution trajectories can be chosen both in finite and infinite parameter ranges so that dynamical information for appropriate systems is kept.
It enables us to track asymptotic behavior with suitable parameters in Lyapunov domains so that various phenomena including singular ones such as blow-up of solutions  (e.g. \cite{TMSTMO2016}) can be treated by using classical approaches of dynamical systems.

Our studies give us a comprehensive understanding of Lyapunov functions and cones, which will lead to studies of asymptotic behavior of dynamical systems around fixed points from a variety of viewpoints.

\bigskip
We end this paper providing a further direction of this research.

\bigskip
\begin{itemize}
\item[{\bf Lyapunov functions for non-hyperbolic equilibria.}] 
\end{itemize}
Our validation criteria for Lyapunov domains rely on hyperbolicity of equilibria. 
In other words, our current method cannot be applied to studies of dynamics around non-hyperbolic equilibria such as fold points, turning points, symmetry-breaking bifurcation points, and center points in Hamiltonian systems.
Dynamics around such points are essentially determined by higher order terms of vector fields around equilibria. 
A holistic discussion of higher-order terms for vector fields or maps would help us with constructing Lyapunov-type functions for describing  behavior of local dynamical systems, while it depends on normal forms of non-hyperbolic equilibria.

\section*{Acknowledgements}
This work was supported by CREST, JST.

% You may incorporate your references as follows in your main tex file.
% Using BibTex is not recommended but can be handled.

\medskip
% The data information below will be filled by AIMS editorial staff
Received xxxx 20xx; revised xxxx 20xx.
\medskip


\begin{thebibliography}{99}
\bibitem{C}
M.J.~Capi\'{n}ski.
\newblock Computer assisted existence proofs of lyapunov orbits at $L_2$ and
  transversal intersections of invariant manifolds in the jupiter--sun pcr3bp.
\newblock {\em SIAM Journal on Applied Dynamical Systems}, 11(4):1723--1753,
  2012.

\bibitem{Con}
C.~Conley.
\newblock {\em Isolated invariant sets and the {M}orse index}, volume~38 of
  {\em CBMS Regional Conference Series in Mathematics}.
\newblock American Mathematical Society, Providence, R.I., 1978.

\bibitem{HY}
T. Hiwaki and N. Yamamoto.
\newblock Some remarks on numerical verification of closed orbits in dynamical
  systems.
\newblock {\em Nonlinear Theory and Its Applications, IEICE}, 6(3):397--403,
  2015.

\bibitem{INTLAB}
S.M.~Rump. {I}ntlab -- {I}nterval {L}aboratory, version 6.

\bibitem{KWZ2007}
H.~Kokubu, D.~Wilczak, and P.~Zgliczy{\'n}ski.
\newblock Rigorous verification of cocoon bifurcations in the michelson system.
\newblock {\em Nonlinearity}, 20(9):2147, 2007.

\bibitem{Mat}
K. Matsue.
\newblock Rigorous numerics for stationary solutions of dissipative
  PDEs - existence and local dynamics.
\newblock {\em Nonlinear Theory and Its Applications, IEICE}, 4(1):62--79,
  2013.

\bibitem{Mat2}
K. Matsue.
\newblock Rigorous numerics for fast-slow systems with one-dimensional slow
  variable: topological shadowing approach.
\newblock \arXiv{1507.01462}, to appear in Top. Meth. Non. Anal.

\bibitem{MZ}
K. Matsue and P. Zgliczy\'{n}ski.
\newblock Private communications, 2012.

\bibitem{Rob}
C. Robinson.
\newblock {\em Dynamical systems - Stability, Symbolic Dynamics, and Chaos}.
\newblock Studies in Advanced Mathematics. CRC Press, Boca Raton, FL, second
  edition, 1999.

\bibitem{SV1980}
J.G. Sinai and E.B.~Vul.
\newblock Discovery of closed orbits of dynamical systems with the use of
  computers.
\newblock {\em Journal of Statistical Physics}, 23(1):27--47, 1980.

\bibitem{SV1981}
J.G. Sinai and E.B. Vul.
\newblock Hyperbolicity conditions for the lorenz model.
\newblock {\em Physica D: Nonlinear Phenomena}, 2(1):3--7, 1981.

\bibitem{Smo}
J.~Smoller.
\newblock {\em Shock waves and reaction-diffusion equations}, volume 258 of
  {\em Grundlehren der Mathematischen Wissenschaften [Fundamental Principles of
  Mathematical Sciences]}.
\newblock Springer-Verlag, New York, second edition, 1994.

\bibitem{TMSTMO2016}
A.~Takayasu, K.~Matsue, T.~Sasaki, K.~Tanaka, M.~Mizuguchi and S.~Oishi,
\newblock Verified numerical computations of blow-up solutions for ODEs.
\newblock in preparation.

\bibitem{W}
D. Wilczak.
\newblock The existence of {S}hilnikov homoclinic orbits in the {M}ichelson
  system: a computer assisted proof.
\newblock {\em Found. Comput. Math.}, 6(4):495--535, 2006.

\bibitem{WZ}
D. Wilczak and P. Zgliczy{\'n}ski.
\newblock Topological method for symmetric periodic orbits for maps with a
  reversing symmetry.
\newblock {\em Discrete Contin. Dyn. Syst.}, 17(3):629--652 (electronic), 2007.

\bibitem{YYN}
M. Yamaguchi, H. Yoshihara, and T. Nishida.
\newblock Validated numerical computations for solving ordinary differential
  equations (in japanese).
\newblock {\em IPSJ Magazine}, 31(9):1197--1203, 1990.

\bibitem{ZLoh}
P. Zgliczy\'{n}ski.
\newblock {$C^1$}-{L}ohner algorithm.
\newblock {\em Found. Comput. Math.}, 2(4):429--465, 2002.

\bibitem{ZCov}
P. Zgliczy{\'n}ski.
\newblock Covering relations, cone conditions and the stable manifold theorem.
\newblock {\em J. Differential Equations}, 246(5):1774--1819, 2009.

\end{thebibliography}
\end{document}